\theoremstyle{plain}      
\newtheorem{thm}{Theorem}[section]     
\newtheorem{theorem}[thm]{Theorem}     
\newtheorem{cor}[thm]{Corollary}     
\newtheorem{corollary}[thm]{Corollary}     
\newtheorem{lemma}[thm]{Lemma}
\newtheorem{prop}[thm]{Proposition}     
\newtheorem{proposition}[thm]{Proposition}
\theoremstyle{definition}      
\newtheorem{defi}[thm]{Definition}
\newtheorem{remark}[thm]{Remark}
\newtheorem{example}[thm]{Example}
\renewcommand{\epsilon}{\varepsilon}
\let\theta\vartheta
\let\phi\varphi
\let\th\theta
\let\ga\gamma
\let\doba\mathbb
\def\mH{\doba{H}}
\def\mN{\doba{N}}
\def\mM{\doba{M}}
\def\mR{\doba{R}}
\def\mS{\doba{S}}
 \let\R\mR
\def\cF{\mathcal{F}}
\def\cFs{\cF^{\rm spin}}
\def\scal{{\mathop{\rm scal}}}
\def\Spin{{\mathop{\rm Spin}}}
\def\supp{{\mathop{\rm supp}}}
\def\vol{{\mathop{\rm vol}}}
\def\dom{{\mathop{\rm dom}}}
\let\pa\partial
\let\la\lambda
\let\na\nabla
\let\ep\epsilon
\newcommand{\lm}{\lambda^{+,*}_{\mathrm{min}}}
\newcommand{\wlm}{\widetilde{\lambda}^+_{\mathrm{min}}}
\newcommand{\Qs}{{Q_{\rm spin}^{*}}}
\newcommand{\Q}{{Q^{*}}}
\newcommand{\wQs}{{\widetilde{Q}_{\rm spin}}}
\newcommand{\wQ}{{\widetilde{Q}}}
\newcommand{\sLa}{\Lambda^*}
\newcommand{\La}{\Lambda}
\newcommand{\wLa}{\widetilde{\Lambda}}
\newcommand{\sLas}{\Lambda^{\rm spin, *}}
\newcommand{\Las}{\Lambda^{\rm spin}}
\newcommand{\wLas}{\widetilde{\Lambda}^{\rm spin}}
\newcommand{\sis}{{\sigma_{\rm spin}^*}}  
\newcommand{\si}{{\sigma^*}}
\newcommand{\dist}{\mathrm{dist}}
\newcommand{\vo}{\mathrm{dvol}}
\renewcommand{\d}{\mathrm{d}}
\renewcommand{\Mc}{\mathbb{M}_c}
\renewcommand{\Re}{\mathrm{Re}}
\begin{document}

\title{Relations between threshold constants for {Y}amabe type bordism invariants}

\author{Bernd Ammann} 
\address{Fakult\"at f\"ur Mathematik, Universit\"at Regensburg, 93040
Regensburg, Germany}
\email{bernd.ammann@mathematik.uni-regensburg.de}

\author{Nadine Gro\ss e} 
\address{Institut f\"ur Mathematik, Universit\"at Leipzig, 04109 Leipzig,
Germany}
\email{grosse@math.uni-leipzig.de}

\subjclass[2010]{ 53C21 (Primary) 35J60, 53C27, 57R65 (Secondary)}

\keywords{Dirac operator, Yamabe constant, Yamabe invariant, conformal Hijazi inequality}

\begin{abstract}
In the work of Ammann, Dahl and Humbert it has turned out that the Yamabe invariant on closed manifolds is a bordism invariant below a certain threshold constant. A similar result holds for  a spinorial analogon. These threshold constants are characterized through Yamabe-type equations on products of spheres with rescaled hyperbolic spaces. We give variational characterizations of these threshold constants, and our investigations lead to an explicit positive lower bound for the spinorial threshold constants.
\end{abstract}

\maketitle

\section{Introduction}

The smooth Yamabe invariant, also called Schoen's $\sigma$-constant, of a closed manifold $M^m$ is defined as
\[\si (M) := \sup \inf \int_M \scal^g \vo_g\in \Big(-\infty, \si(\mS^m) =m(m-1) \vol(\mS^m)^{\frac{2}{m}}\Big],\]
where the supremum runs over all conformal classes $[g_0]$ on $M$, and the
infimum goes over all metrics $g$ in $[g_0]$ such that $(M,g)$ has volume one. The smooth Yamabe invariant is an important geometric quantity. In particular, $\si(M)>0$ if and only if $M$ admits a metric of positive scalar curvature. However,the smooth Yamabe invariant is quite mysterious. It is only known for very few examples, e.g. the sphere, cp. Remark~\ref{inv_sphere},  $\si (\mathbb{T}^m)=0$, cf. \cite[Cor.~2.5]{grom_law}, and $\si(\R\mathbb{P}^3)=2^{-2/3}\si(\mS^3)$, \cite[Cor.~2.3]{bray_neves}. In particular, there is no known example of a manifold of dimension $m\geq 5$ with $\si(M)\not\in \{0, \si(\mS^m)\}$. 

In \cite{ADH} M.~Dahl, E.~Humbert, and the first author proved a surgery formula for the smooth Yamabe invariant, cp. Theorem~\ref{surgthm}. In particular it says that if a manifold $N^m$ is obtained from a closed manifold $M^m$ by a surgery of codimension $m-k\geq 3$ and $\si(M)$ is below a certain threshold constant $\La_{m,k}$, then $\si(N)\geq \si(M)$.  The threshold constants $\Lambda_{m,k}$ appearing in this result are certain Yamabe-type invariants for special noncompact model spaces $\Mc^{m,k}$ which are products of rescaled hyperbolic spaces and spheres, see Subsection~\ref{mod_sp} for the precise definition. 
In particular, it follows that the smooth Yamabe invariant is a bordism invariant in the following sense: 
Suppose that $M$ and $N$ are connected closed smooth spin manifolds of dimension $m\geq 5$ with fundamental group $\Gamma$, representing the same element in $\Omega^{\rm spin}_m (B\Gamma)$, then $0\leq \si(M) < \La_m:= \min_{k=2,\ldots, m-3} \La_{m,k}$ implies $\si (M)=\si(N)$, \cite[Sec.~1.4]{ADH}. Thus, if sufficiently many manifolds with $\si(M)\in (0, \La_m)$ exist, one obtains a rich and interesting subgroup in the bordism group $\Omega^{\rm spin}_m (B\Gamma)$ and similar versions hold in oriented bordism classes.

In order to understand the structure of  subgroup it is essentially to get as much knowledge about the surgery constants $\La_{m,k}$ as possible.

If current conjectures about explicit lower bounds for $\La_{m,k}$, see \cite[Sec.~1.4]{ADH}, turn out to be true, then the supremum in the smooth Yamabe invariant of $\mathbb{CP}^3$ is not attained by the Fubini-Study metric.

The current article will not give explicit positive lower bound for $\La_{m,k}$, but it will provide many relations to a spinorial analogue of the problem. The smooth Yamabe invariant $\si(M)$ has a spinorial analogue $\sis(M)$, cf. Subsection~\ref{mod_sp}. For closed  manifolds the Hijazi inequality  gives $\sis(M)\geq \si(M)$.
 As in the Yamabe case there is a surgery formula for $\sis$, cp. Theorem~\ref{surgspinthm}, and again a threshold constant $\Las_{m,k}$ appears. However, now even codimension $2$ surgeries are covered. This has implications for the smooth Yamabe constant as well: If  $M$ and $N$ are arbitrary closed spin manifolds (not necessarily simply connected), and if $M$ is spin-bordant to $N$, then $\sis(M)<\Las_{m,k}$ for $k=0,\ldots, m-2$ implies $\sis(M)=\sis(N)$. In particular, $\sis(M)\geq \si(N)$. Finding interesting manifolds with $\sis(M)<\Las_{m,k}$ consists of two parts. First one has to obtain explicit positive lower bounds for $\Las_{m,k}$ which is the main subject of the present article and then finding examples for $M$ which is not covered here.
 
As the threshold constant are defined as (spinorial) Yamabe-type invariants of noncompact model spaces, one expects in view of the Hijazi inequality that  $\Las_{m,k}\geq \La_{m,k}$.  This question is quite subtle because on noncompact manifolds there are several ways to define Yamabe-type invariants which are sometimes related and sometimes unrelated to each other. One goal of the article is to clarify these relations. 

The structure of the article: In Section~\ref{sec_prelim} we fix notations, preliminaries and give existing results. In particular, we define the model spaces and the different (spinorial) Yamabe-type invariants for noncompact manifolds. 
This allows us to summarize the results of the article in Section~\ref{sec_results}. These results are proved in the remaining sections. In particular, in Section~\ref{sec_reg} we provide regularity statement for the Euler-Lagrange equation of the spinorial Yamabe functional, which is a nonlinear Dirac eigenvalue equation. For more details we refer to the end of Section~\ref{sec_reg}.\\

{\bf Acknowledgment.} The second author thanks the University of Regensburg for the hospitality during a short term visit in Regensburg supported by SFB 'Higher Invariants'.
\section{Preliminaries}\label{sec_prelim}

Throughout the article we assume that the reader is familiar with the basic facts about the solution of the Yamabe problem on closed manifolds by Trudinger, Aubin, Schoen and Yau. There are many beautifully written introductions in the literature, e.g. \cite{LP}, \cite{Hebey_book}. 

\subsection{Notations}\label{not_list}

In the article 
 a spin manifold always means a manifold admitting a spin structure together with a fixed choice of spin structure.
Spin structures can be defined for arbitrary oriented manifolds, but as soon as we have a Riemannian metric it yields a spin structure in the sense of $\Spin (n)$-principal bundles.

 For a Riemannian spin manifold $(M,g)$ we will always  write $S_M$ for the spinor bundle. In case the underlying manifold is fixed, we shortly write $S=S_M$. 
 
  The space of spinors, i.e., sections of $S$, is denoted by $\Gamma (S)$. The space of smooth compactly supported sections is called $C_c^\infty(M, S)$. The hermitian metric on fibers of $S$ is written as $\<.,.\>$, the corresponding norm as $|.|$.  We write $(.,.)_g$ for the $L^2$-product of spinors.
 
 We denote by $D\colon C_c^\infty(M, S) \to C_c^\infty(M, S)$ the Dirac operator on $(M,g)$. In case several manifolds or metrics are involved, we sometimes specify its affiliation, i.e., $D^g$, $D^M$ or $D^{M,g}$. Analogously we proceed for other operators and quantities.

The sphere $\mS^1$ carries two spin structures, one of them, the so-called bounding spin structure is obtained by restricting the unique spin structure on the two-dimensional 
disk to its boundary. The kernel of the Dirac operator for this spin structure is trivial. The sphere $\mS^1$ with the other spin structure represents the non-trivial 
spin-bordism class in dimension $1$.
In the article we will always assume that $\mS^1$ is equipped with the bounding spin structure, unless stated otherwise.

 A Riemannian manifold is of bounded geometry if it is complete, its injectivity radius is bounded from below and the curvature tensor and all derivatives are bounded.

 The ball around $x\in M$ with radius $\epsilon$ w.r.t. the metric $g$ on $M$ is written as $B^{M,g}_\epsilon(x)=B_\epsilon(x)\subset M$.

 In the article we need several Sobolev and Schauder spaces:  For $s\in [1,\infty]$ we write $\Vert . \Vert_{L^s(g)}$ for the $L^s$-norm on $(M,g)$. In case the underlying metric is clear from the context we abbreviate shortly by $\Vert. \Vert_s$.

 Let $H_k^s$ denote both the space of distributions on $M$ and the one of distributional sections in $S_M$ that have finite $H^s_k$norm given by 
\[ \Vert \phi\Vert_{H^s_k}^q=\sum_{i=0}^k \Vert (\nabla)^i \phi\Vert_{L^s}^s.\] Here $\nabla$ denotes the covariant derivative on $M$ and $S_M$, respectively, dependent whether $\phi$ is a distribution on $M$ and a distributional section in $S_M$, respectively.
$H_{s,\rm{loc}}^k$ means that any restriction of the distribution to a compact subset has to be in $H^s_k$ of that subset.

The space of $i$-times continuously differentiable functions on $M$ is denoted by $C^i(M)$, and $C^{i,\alpha}$ denotes the corresponding Schauder space for $\alpha\in (0,1]$.

\subsection{\texorpdfstring{The model spaces $\Mc^{m,k}$}{The model spaces Mc}}\label{mod_sp}

Let $0\leq k\leq m-1$ and $c\in [0,1]$. $(\Mc^{m,k}=\mH_c^{k+1} \times \mS^{m-k-1}, g_c=  g_{\mH_c^{k+1}}+\sigma^{m-k-1})$ where $\sigma^{m-k-1}$ denotes the standard metric on $\mS^{m-k-1}$ and $(\mH_c^{k+1}, g_{\mH_c^{k+1}})$ is the rescaled hyperbolic space with scalar curvature $-c^2k(k+1)$ if $c\in (0,1]$ and the Euclidean space if $c=0$. 

We introduce coordinates on $\mH_c^{k+1}$ by  equipping $\mR^{k+1}$
with the metric $g_{\mH_c^{k+1}}=\d r^2 +f(r)^2\sigma^k $ where 
\[ f_c(r):= \sinh_c(r):= \left\{ \begin{matrix} \frac{1}{c}\sinh(cr) &\quad
\mathrm{if\ }c\ne 0\\
r&\quad \mathrm{if\ }c=0. \end{matrix} \right. \]

The manifold $(\mathbb{M}_1^{m,k}=\mH^{k+1}\times \mS^{m-k-1}, g_1= g_{\mH^{k+1}} + \sigma^{m-k-1} =  \sinh^2t\, \sigma^{k+1} +\d t^2 + \sigma^{m-k-1})$ is conformal to $(\mS^m\setminus \mS^k, \sigma^m)$, \cite[Prop.~3.1]{ADH},
 \begin{equation*}
\mathfrak{u}\colon \mH^{k+1}\times \mS^{m-k-1} \to \mS^m\setminus \mS^k,\  g_1=f^2u^* \sigma^m\ {\rm where\ } f=f(t)=\cosh^2 t.\end{equation*}

\subsection{Regularity theory}

We recall the standard estimates:

\begin{thm}\label{est} Let $(M^m,g)$ be a Riemannian spin manifold of bounded geometry. Let $R>0$ be smaller than the injectivity radius of $M$, and let $r\in (0,R)$.
\begin{itemize}
 \item[(i)]\label{est_i} (Inner $L^s$-estimate, \cite[proof of Thm. 8.8]{GT}, spin version \cite[proof of Thm.~3.2.1 and~3.2.3]{Ammha}) 
  Let $\phi\in H_{1, \rm{loc}}^s$ be a solution of $D\phi=\psi$ for $\psi\in H_{k ,\rm{loc}}^s$. Then,
  there exists a constant $C=C(s, r, R)$ such that for all $x\in M$
 \[ \Vert \phi\Vert_{H_{k+1}^s(B_r(x))}\leq C (\Vert \phi\Vert_{L^s(B_{R}(x))} +\Vert \psi\Vert_{H_k^s(B_{R}(x))})\] 
 \item[(ii)]\label{est_ii} (Embedding into $C^{0,\gamma}$) Let $m< s$ and $0\leq \gamma\leq 1-\frac{m}{s}$. By the spin version of the proof of \cite[Sect.~7.8 (Thm.~7.26)]{GT} there exists a constant $C=C(s,r)$ such that 
 $H_1^s(B_{R}(x))$ is continuously embedded in $ C^{0,\gamma}(\overline{B_r(x)})$ for all $x\in M$.\\
 \item[(iii)]\label{est_iii} (Schauder estimates) \cite[Corollary~3.1.14]{Ammha} There is a constant $C=C(r,R,k)>0$ such that for $\alpha>0$, $\psi\in C^{k,\alpha}$ with $D\phi=\psi$  weakly it holds for all $x\in M$
\[ \Vert \phi\Vert_{C^{k+1,\alpha}(B_{r}(x))}\leq C( \Vert \phi\Vert_{C^k(B_{R}(x))}+\Vert \psi\Vert_{C^{k,\alpha}(B_{R}(x))}).\]
 \item[(iv)]\label{est_iv} (Sobolev Embedding into $L^p$, \cite[Thm.~7.26]{GT}) Let $k,\ell\in \mR$, $k\geq \ell$ and $s,t\in (1,\infty)$ with $k-(m/s)\geq \ell-(m/t)$,
then the restriction map $H^s_k(B_{R}(x), S)\to H^t_\ell(B_{r}(x), S)$ is continuous for all $x\in M$ and $r>0$. 
For fixed $R>r>0$ the operator norm of these restriction
maps can be chosen uniformly in $x$.
 \end{itemize}
\end{thm}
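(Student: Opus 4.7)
The plan is to view each of (i)--(iv) as a local elliptic regularity statement for the first-order elliptic operator $D$, or equivalently for its Laplace-type square $D^2$, transplanted to the spinor setting, and then to use the bounded geometry hypothesis to make all constants uniform in $x$. First I would fix $x\in M$, pass to geodesic normal coordinates on $B_R(x)$, and trivialise the spinor bundle $S_M$ by parallel transport along radial geodesics (the Bourguignon--Gauduchon construction). In this trivialisation $D$ becomes a first-order elliptic operator on the flat ball $B_R(0)\subset\R^m$ acting on $\C^N$-valued maps, whose principal symbol is essentially Clifford multiplication. The bounded-geometry hypothesis gives uniform $C^\infty$-bounds on the Christoffel symbols and on the spin-connection one-forms, hence uniform bounds on the coefficients of the local representatives of $D$ and of $D^2=\nabla^*\nabla+\tfrac14\scal$. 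This uniformity is what ultimately lets the constants in (i)--(iv) be chosen independently of $x$.

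For (i), I would apply the Gilbarg--Trudinger interior $L^s$-estimate, \cite[Thm.~8.8]{GT}, to the second-order equation $D^2\phi=D\psi$, and bootstrap: iterating the estimate on a shrinking sequence of intermediate radii between $r$ and $R$ and using the first-order relation $D\phi=\psi$ to convert derivatives of $\phi$ into derivatives of $\psi$ at each step yields the $H^s_{k+1}$-bound. Parts (ii) and (iv) reduce componentwise in the trivialisation to the corresponding scalar Morrey and Sobolev embeddings, with the intrinsic and Euclidean norms being uniformly equivalent on $B_R(0)$ thanks to the bounded-geometry bounds on the metric and connection. Part (iii), the Schauder estimate, follows similarly by applying the classical interior Schauder estimate to $D^2\phi=D\psi$ and then bootstrapping via $D\phi=\psi$ to recover the sharper dependence on $\psi\in C^{k,\alpha}$ rather than on $D\psi\in C^{k-1,\alpha}$.

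The main, and essentially only, obstacle is the bookkeeping required to make the constants genuinely uniform in $x$. One must check that the trivialising charts form a uniform atlas, in the sense that the change of variables between the intrinsic geometry on $B_R(x)$ and the flat model on $B_R(0)$ is controlled in $C^{k+1,\alpha}$ by quantities depending only on $k$, $\alpha$, $s$, and the bounded-geometry bounds; this is where the assumption that \emph{all} covariant derivatives of the curvature are bounded actually enters. Once this uniformity is established, each of (i)--(iv) is a direct application of the corresponding Euclidean theorem cited in the statement.
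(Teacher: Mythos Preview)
Your outline is correct and is essentially the standard argument one finds in the cited references (in particular \cite{Ammha}). Note, however, that the paper itself does not give a proof of this theorem: it is stated purely as a recollection of known estimates, with pointers to \cite{GT} and \cite{Ammha} for each item. So there is no ``paper's own proof'' to compare against beyond those citations, and your sketch---trivialise $S_M$ in geodesic normal coordinates via the Bourguignon--Gauduchon construction, use bounded geometry to obtain uniform $C^\infty$-control on the local coefficients of $D$ and $D^2$, then invoke the corresponding Euclidean interior estimates and Sobolev/Morrey embeddings componentwise---is exactly the route taken in those sources.
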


\subsection{\texorpdfstring{$L^s$-invertibility of Dirac operators}{Ls-invertibility of Dirac operators}}
For a complete manifold $M$, we define the norm $\Vert \phi\Vert_{\tilde{H}_1^s}:=\Vert \phi\Vert_s+\Vert D \phi\Vert_s$ for $1 \leq  s < \infty$. For $1\leq s<\infty$, let $\tilde{H}_1^s=\tilde{H}_1^s(M,S)$ be the completion of $C_c^\infty(M,S)$ w.r.t. the norm $\Vert \phi\Vert_{\tilde{H}_1^s}$. Then $D_s\colon \tilde{H}_1^s:=\dom\, D_s \subset L^s
\to L^s$ is a closed extension of the Dirac operator.  By \cite[Lem.~B.2]{ammann.grosse:p13a}  we have $(D_s)^*=D_{s^*}$ for $1<s<\infty$ and $s^{-1}+(s^*)^{-1}=1$.

Note that on manifolds of bounded geometry and $1<s<\infty$ the $H_1^s$-norm and the graph norms $\tilde{H}_1^s$ are equivalent, \cite[Lem.~A.2]{ammann.grosse:p13a}. 

General properties of the $L^s$-spectrum of Dirac operators can be found in \cite[App.~B]{ammann.grosse:p13a}. Here, we only cite the result on $L^s$-invertibility of our model spaces.

\begin{proposition}\cite[Thm.~1.1]{ammann.grosse:p13a}\label{inv_Mc}
Let $1\leq s\leq \infty$. The Dirac operator $D\colon L^s \to L^s$ on~$\Mc^{m,k}$ is $L^s$-invertible 
if $\lambda_1=\frac{m-k-1}{2}>ck\left| 1/s-1/2\right|$.       
\end{proposition}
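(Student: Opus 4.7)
\smallskip

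The plan is to exploit the product structure $\Mc^{m,k}=\mH_c^{k+1}\times\mS^{m-k-1}$ and reduce the $L^s$-invertibility on the product to statements on each factor. First, since $\mS^{m-k-1}$ is compact, the sphere Dirac operator admits an $L^2$-orthonormal basis of eigenspinors $\{\psi_j\}$ with eigenvalues $\mu_j$ satisfying $|\mu_j|\geq \lambda_1=(m-k-1)/2$. Using the standard identification of the product spinor bundle, any $\phi\in C_c^\infty(\Mc^{m,k},S)$ admits an expansion $\phi(x,y)=\sum_j \phi_j(x)\otimes\psi_j(y)$ with $\phi_j\in C_c^\infty(\mH_c^{k+1},S_{\mH_c^{k+1}})$. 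Under this decomposition the product Dirac operator acts block-diagonally: on the $j$-th summand it becomes an operator $\widetilde D_j$ on $\mH_c^{k+1}$ of the form $D^{\mH_c^{k+1}}$ twisted/shifted by the scalar $\mu_j$ (the precise Clifford twist depending on the parity of $m-k-1$, but in all cases one has $\widetilde D_j^{*}\widetilde D_j=(D^{\mH_c^{k+1}})^{2}+\mu_j^{2}$).

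Second, invoke the known $L^s$-spectrum of the Dirac operator on the rescaled hyperbolic space $\mH_c^{k+1}$. The sectional curvature is $-c^2$, so the $L^2$-spectrum is $(-\infty,-ck/2]\cup[ck/2,\infty)$, and, by the general theory collected in \cite[App.~B]{ammann.grosse:p13a}, the $L^s$-spectrum is a closed planar region whose excursion off the real axis is bounded by a multiple of $ck|1/s-1/2|$. Consequently each $\widetilde D_j$, being $D^{\mH_c^{k+1}}$ perturbed by $\mu_j$, is $L^s$-invertible as soon as $|\mu_j|>ck|1/s-1/2|$, and the operator norm of $\widetilde D_j^{-1}$ is bounded uniformly in $j$ (decaying like $1/|\mu_j|$ once $|\mu_j|$ is large). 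Under the hypothesis $\lambda_1>ck|1/s-1/2|$, all $\widetilde D_j$ are thus uniformly invertible.

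The third step is to reassemble the block inverses $\widetilde D_j^{-1}$ into a bounded operator on $L^s(\Mc^{m,k})$. Define formally $D^{-1}\psi=\sum_j (\widetilde D_j^{-1}\psi_j)\otimes\psi_j$ on compactly supported smooth spinors and verify that it extends to a bounded two-sided inverse of $D_s$. At $s=2$ this is immediate from Plancherel on the sphere factor together with the uniform bound from Step 2. For general $s\in(1,\infty)$ one reinterprets $D$ as a Dirac-type operator on $\mH_c^{k+1}$ with values in the Banach space $L^s(\mS^{m-k-1},S)$ and applies the uniform fibrewise resolvent bounds combined with Fubini; the endpoints $s=1,\infty$ are then obtained by duality via $(D_s)^{*}=D_{s^{*}}$ and an approximation argument.

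The main obstacle is the third step: for $s\neq 2$ the $L^s$-norm on the product does not reduce to an $\ell^2$-sum over the sphere modes, so the reassembly is not just a Plancherel argument. The quantitative description of the $L^s$-spectrum of $D^{\mH_c^{k+1}}$ and uniform control of the resolvents $\widetilde D_j^{-1}$ in $j$ are what make this step go through, and they are exactly what the sharp threshold $ck|1/s-1/2|<\lambda_1$ is tailored to provide.
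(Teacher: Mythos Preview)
The paper does not prove this proposition at all: it is quoted verbatim as \cite[Thm.~1.1]{ammann.grosse:p13a} and no argument is supplied in the present article. There is therefore no ``paper's own proof'' to compare your proposal against.

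That said, a few remarks on your sketch. The overall strategy---decompose along sphere eigenmodes, reduce to shifted Dirac operators on $\mH_c^{k+1}$, and use the known $L^s$-spectral theory there---is indeed the natural one and is the route taken in the cited reference. However, your Step~3 is where the actual work lies, and you essentially acknowledge this without carrying it out. For $s\neq 2$ the sphere-mode expansion is not an $L^s$-isomorphism, so ``reassembling the block inverses'' via Fubini and uniform fibrewise bounds is not a complete argument: one needs either vector-valued multiplier/transference theorems or a careful kernel estimate to pass from the $L^2$ case to general $s$. Your appeal to duality for the endpoints $s=1,\infty$ is also delicate, since $(D_s)^*=D_{s^*}$ is stated in the paper only for $1<s<\infty$. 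In short, your outline identifies the right ingredients but does not close the gap in the non-Hilbert case; the full proof is in \cite{ammann.grosse:p13a}.
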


\subsection{Yamabe type constants and Yamabe type invariants}

Let $(M^m,g)$ be a complete $m$-dimensional Riemannian spin manifold. 
By $\Delta^g$ we denote the Laplacian on $(M,g)$ and by $\scal^g$ its 
scalar curvature. Let $L^g=a\Delta^g+\scal^g$ be the conformal Laplacian 
where $a=4\frac{m-1}{m-2}$. We recall the following definitions
\begin{figure}
 \begin{tabular}{c|c|c|c|c}
 $a$ & $p$ & $p^*$ & $q$ & $q^*$\\[1mm]
 \hline
  &&&&\\[-3mm]
  $4\frac{m-1}{m-2}$ & $\frac{2m}{m-2}$ & $\frac{2m}{m+2}$ & $\frac{2m}{m-1}$ & $\frac{2m}{m+1}$  
 \end{tabular}
 \caption{Some constants in the article for $m$-dimensional manifolds}\end{figure}

\begin{defi}

{\bf Functionals}

\[\cF(v):=\frac{\int_M vL^gv\,\vo_g}{\Vert
v\Vert_{L^\frac{2m}{m-2}(g)}^2}, \quad \cFs(\phi):=\frac{\Vert D^g\phi\Vert_{L^\frac{2m}{m+1}(g)}^2}{(D^g\phi,\phi)_g}\]

For further use we define for the rest of the paper $p:=\frac{2m}{m-2}$ and $q:=\frac{2m}{m+1}$. The corresponding Euler-Lagrange equations for normalized solutions are the so-called Yamabe equation \cite{LP} 

\[ L^gv= \mu v^{\frac{m+2}{m-2}},\qquad \Vert v\Vert_{L^{p=\frac{2m}{m-2}}}=1\]

and the spinorial brother \cite{Amm03a}

\[ D^g\phi= \lambda |\phi|^{\frac{2}{m-1}}\phi,\qquad \Vert \phi\Vert_{L^{q=\frac{2m}{m-1}}}=1.\]

{\bf Yamabe type constants defined by compactly supported test functions.}

\begin{align*}
\Q(M,g):=& \inf \left\{ \cF(v) \ \Big|\ v\in
C_c^\infty(M,S)\setminus\{0\} \right\},\nonumber\\ 
\lm(M,g):= & \inf \left\{ \cFs(\phi)\ \Big|\ \phi\in C_c^\infty(M,S), (D^g\phi,\phi)_g>0 \right\}   
\end{align*}

{\bf Yamabe type constants defined over solutions.}

$\wQ(M,g)$ is the Yamabe invariant ``defined over the solutions'', i.e., 
\begin{align*}\wQ(M,g):=\inf\{\mu_v\,|\,v\in \Omega^{(1)}(M,g)\}
\end{align*}
where  $\Omega^{(1)}(M,g)$ is the set of all nonnegative functions
$v\in C^2(M)\cap  L^{\infty}(M)\cap  L^2(M,g)$
satisfying $L^g v = \mu_v v^{p-1}$ 
for a real number $\mu_v$ and with $\|v\|_{L^p(M,g)}=1$ ($p=\frac{2m}{m-2}$ as always).

Analogously, we introduce a quantity corresponding to $\lm(M,g)$  defined using the solutions of the Euler-Lagrange equation of $\mathcal{F}^{\rm spin}$:

\begin{align}{\wlm}(M,g):= \inf \{\lambda\in (0,\infty)\ |\ &\exists \phi\in L^\infty\cap L^2\cap C^1:\nonumber\\& 0<\Vert \phi\Vert_{L^\frac{2m}{m-1}(M,g)}\leq 1,\ D^g\phi=\lambda|\phi|^\frac{2}{m-1}\phi \}. \label{def_solLa} \end{align}

We will see in the next section why these different quantities are geometrically relevant.\\

{\bf Renormalized spinorial invariants}
We also introduce renormalized versions
of $\lm$ and $\wlm$:
\begin{align}\label{ren_inv}\Qs(M,g)=4\frac{m-1}m \lm(M,g)^2,\ \wQs(M,g)=4\frac{m-1}m \wlm(M,g)^2.\end{align}
 This renormalization will make simpler.\\

{\bf Yamabe type invariants for compact manifolds.}
Now we define the smooth Yamabe invariant $\si(M)$ as 
\[  \si(M):=\sup_{g} \Q(M,g)\]

where the supremum runs over all Riemannian metrics on $M$.  Thus, $\si$ only depends on the diffeomorphism type of $M$. Note that the smooth Yamabe invariant is positive if and only if $M$ admits a metric of positive scalar curvature.

A similar spinorial Yamabe invariant $\tau^+(M)$ was introduced in \cite{Ammtau, Amm04t}.  It is

\[ \tau^+(M):=\left\{ \begin{matrix}
                      \sup\nolimits_{g\in R^{\rm inv}(M)} \lm(M,g ) &  \text{if\ }R^{\rm inv}(M)\neq \varnothing\\
                     0\hfill &  \text{if\ }R^{\rm inv}(M)= \varnothing,
                      \end{matrix}
\right.\]
where $R^{\rm inv}(M)$ is the set of Riemannian metrics on $M$ such that $D^g$ is invertible. The definition of $\tau^+$ is slightly different to the original one in \cite{Ammtau, Amm04t}, but obviously equivalent. Note that for connected closed manifolds one knows from \cite{ammann.dahl.humbert:09} that $R^{\rm inv}(M)\neq \varnothing$ if and only if the index of $M^m$ in ${\rm KO}_m$ vanishes.
Thus, $\tau^+(M)$ is  positive if and only if this index vanishes. 
The invariant $\tau^+(M)$ only depends on the diffeomorphism type of $M$ and its spin structure. 

These Yamabe type invariants will be considered in this article only in the case that $M$ is compact. In this case  the solution of the classical Yamabe problem \cite{LP} implies $\wQ(M,g)=\Q(M,g)$, and similar results in the spin case \cite{Amm03a} implies   
${\wlm}(M,g)=\lm(M,g)$. Thus, we also see 
\[  \si(M)=\sup_{g} \wQ(M,g), \qquad \tau^+(M)=\left\{ \begin{matrix}
                      \sup\nolimits_{g\in R^{\rm inv}(M)} \wlm(M,g ) &  \text{if\ }R^{\rm inv}(M)\neq \varnothing\\
                     0\hfill &  \text{if\ }R^{\rm inv}(M)= \varnothing.
                      \end{matrix}
\right.\]
Similar to above we also define (for $M$ compact) a renormalized version
\[ \sis(M):=4\frac{m-1}m\tau^+(M)^2.\]

We want to remark that $\si(M)$ was considered for non-compact manifolds
in \cite{grosse.nardmann:p12}. 
\end{defi}

{\bf The $\Lambda$-invariants.}
We define 
\[ \wLa_{m,k}:= \inf_{c\in [0,1]} \wQ (\Mc^{m,k})\text{\ and\ } \sLa_{m,k}:= \inf_{c\in [0,1]} Q^* (\Mc^{m,k}).\]

These invariants are important because of their relation to the invariant $\La_{m,k}$ that contributes to the Surgery Theorem ~\ref{surgthm}. We have $\La_{m,k}=\wLa_{m,k}$ unless $m=k-3\geq  7$ or $m=k-2$ \cite[Thm.~3.1 and proof of Corollary~3.2]{ammann.dahl.humbert:p11b}. The idea behind the notation is that the invariant with $*$ is the infimum of a functional, the invariant with $\sim$ is defined using solutions of the Euler-Lagrange equation and the invariant without such decoration is the invariant in the surgery theorem. We know from \cite[Thm.~3.3]{ammann.dahl.humbert:p11b} that all these invariants are positive for $0\leq k\leq m-3$.

In the spinorial case we define similarly 
\[ \wLas_{m,k}=\inf_{c\in [0,1]} \wQs(\Mc^{m,k})\ \text{and}\ \sLas_{m,k}:=  \inf_{c\in [0,1]} \Qs (\Mc^{m,k}).\] It is known from \cite[Thm.~1.1]{ADH09} that $\wLas_{m,k}>0$ for all $0\leq k\leq m-2$.
The invariant $\Las_{m,k}$ in the Spinorial Surgery Theorem~\ref{surgspinthm} can be chosen to be $\wLas_{m,k}$ for all $0\leq k\leq m-2$, \cite[Cor.~1.4]{ADH09}. We introduce the notation  $\Las_{m,k}:=\wLas_{m,k}$ to make the presentation analogous to the non-spin case.

One of the main goals of this article is to search for relations between these five possibly different $\Lambda$-invariants.

\begin{remark}\label{inv_sphere}
 The $Q$-invariants for the spheres play a special role. We collect the main properties:  For all manifolds $(M^m,g)$ it holds $\Q (M^m,g)\leq \Q (\mS^m)$. 
 If $M$ is spin, then $\Qs (M^m,g)\leq \Qs (\mS^m)$.
  The invariant $\Q(\mS^m)$ is attained by a constant test function $v$ such that $\Vert v\Vert_{\frac{2m}{m-2}}=1$. Thus, $Lv=m(m-1)v=\Q(\mS^m) v^{\frac{m+2}{m-2}}$ and $\Q(\mS^m)=m(m-1)\rm{vol}(\mS^m)^{\frac{2}{m}}$. 
  The invariant $\lm(\mS^m)$ is attained by a Killing spinor $\phi$ to the Killing constant $-\frac{1}{2}$. Note that the normalization in \eqref{ren_inv} is chosen such that $\Qs(\mS^m)=\Q(\mS^m)$. Since $\mS^m$ is closed $\Q(\mS^m)=\wQ(\mS^m)$ and $\Qs(\mS^m)=\wQs(\mS^m)$. 
 
  Moreover, for $(M^m,g)$ not locally conformally flat and $m\geq 6$ Aubin showed, see \cite[p.292]{Aubin}, $\Q (M^m, g)< \Q (\mS^m)$. 
\end{remark}

\subsection{Surgery-monotonicity for Yamabe type invariants below thresholds}\label{surg_theorems}

In order to define the constant $\La_{m,k}$ mentioned above we set
\[Q^{(2)}(M,g):=\inf\{\mu_u\,|\,u\in \Omega^{(2)}(M,g)\}
\]
where  $\Omega^{(2)}(M,g)$ is the set of all nonnegative functions
$u\in C^2(M)\cap  L^{\infty}(M)$ satisfying $L^g u = \mu_u u^{p-1}$ 
for a nonnegative real number $\mu_u$, $\|u\|_{L^p(M,g)}=1$, where $p=\frac{2m}{m-2}$ as always, and $\mu_u\Vert u\Vert_{L^\infty}^{\frac{4}{m-2}}\geq \frac{(m-k-2)^2(m-1)}{8(m-2)}$. Then, we set, cf. \cite[Sect.~3]{ADH}, \cite[Sect.~2.6]{ammann.dahl.humbert:p11b}, 
\[\La_{m,k}=\min\{ \wLa_{m,k}, \inf_{c\in[0,1]} Q^{(2)}(\Mc^{m,k}) \}.\] 
It follows from \cite[Thm.~3.1 and below]{ammann.dahl.humbert:p11b} that in case $k=m-3\leq 6$ or $k\leq m-4$ we already have $\La_{m,k}=\wLa_{m,k}$.

\begin{theorem}[Surgery-monotonicity for the Yamabe invariant, {\cite[Cor.~1.4]{ADH}}]\label{surgthm} 
Assume that~$N^m$ is a closed Riemannian  manifold that is obtained from $M^m$ by a surgery of codimension $m-k\geq 3$.
Then \[ \si(N)\geq \min\{ \si(M), \La_{m,k}\}.\] 
\end{theorem}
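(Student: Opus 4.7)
The plan is to prove surgery monotonicity by constructing, for any metric $g$ on $M$ with $Q(M,g)$ close to $\sigma(M)$, a family of metrics $g_\theta$ on $N$ whose Yamabe constants are asymptotically controlled from below by $\min\{Q(M,g), \Lambda_{m,k}\}$. Since this bound yields $\sigma(N) \geq \min\{Q(M,g), \Lambda_{m,k}\}$ and $g$ is arbitrary, the claim follows by sending $Q(M,g) \to \sigma(M)$.

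First, fix $\varepsilon > 0$ and a conformal class on $M$ with $Q(M,g) > \sigma(M) - \varepsilon$. By solving the Yamabe problem on $M$ one may assume $g$ has constant scalar curvature $Q(M,g)$. The surgery removes a tubular neighborhood of an embedded $S^k \subset M$ and glues in $D^{k+1} \times S^{m-k-1}$. I would use Fermi coordinates around $S^k$, make a preliminary conformal change that flattens the normal geometry, and then interpolate, through a long conformal cylinder, between $g$ on the exterior and a standard metric on the cap $D^{k+1} \times S^{m-k-1}$. The cylindrical region, reinterpreted via the conformal map $\mathfrak{u}$, is isometric to a bounded piece of the model $\Mc^{m,k}$, so the family $g_\theta$ is parametrized by neck radius, neck length, and the curvature parameter $c \in [0,1]$ of the inserted hyperbolic factor.

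The main estimate, $\liminf_\theta Q(N,g_\theta) \geq \min\{Q(M,g), \Lambda_{m,k}\}$, is proved by contradiction via concentration analysis. Assume the inequality fails, and let $u_\theta$ be unit $L^p$-normalized solutions of $L^{g_\theta} u_\theta = \mu_\theta u_\theta^{p-1}$ with $\mu_\theta$ below the right-hand side. One then tracks where the $L^p$-mass of $u_\theta$ accumulates. If mass stays in the compact part of $M$ away from the neck, weak limits produce a nontrivial solution on $(M,g)$ with eigenvalue at most $\liminf \mu_\theta$, contradicting $\mu_\theta < Q(M,g)$. Otherwise, mass concentrates in the neck, and one rescales around the point of maximum to obtain, after passing to a subsequence and adjusting $c$, a nonzero limit $u_\infty \geq 0$ on $\Mc^{m,k}$ solving $L^{g_c} u_\infty = \mu_\infty u_\infty^{p-1}$. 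Depending on whether the blow-up scaling preserves $L^p$-finiteness or only $L^\infty$-boundedness, the limit lies in $\Omega^{(1)}(\Mc^{m,k})$, giving $\mu_\infty \geq \widetilde{Q}(\Mc^{m,k}) \geq \widetilde{\Lambda}_{m,k}$, or in $\Omega^{(2)}(\Mc^{m,k})$ with the scale-invariant bound $\mu_\infty \|u_\infty\|_\infty^{4/(m-2)} \geq \tfrac{(m-k-2)^2(m-1)}{8(m-2)}$ coming from a Harnack-type lower bound on the concentration scale, giving $\mu_\infty \geq Q^{(2)}(\Mc^{m,k})$. Both alternatives yield $\mu_\infty \geq \Lambda_{m,k}$, contradicting the choice of $\mu_\theta$.

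The hard part will be the second alternative: ruling out the possibility that the blow-up limit lies in neither $\Omega^{(1)}$ nor $\Omega^{(2)}$, and verifying the scale-invariant lower bound $\mu_\infty \|u_\infty\|_\infty^{4/(m-2)} \geq \tfrac{(m-k-2)^2(m-1)}{8(m-2)}$ for the non-$L^p$ limit profile. This requires precise control of the geometry of the degenerating neck (uniform lower bounds on Ricci/scalar curvature in the blow-up regime) together with a maximum principle argument comparing $u_\theta$ to explicit barriers on $\mathbb{H}_c^{k+1} \times S^{m-k-1}$. The codimension assumption $m-k \geq 3$ enters precisely here, as it makes the constant $(m-k-2)^2$ positive and rules out a trivial collapse of the sphere factor.
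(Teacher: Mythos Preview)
This theorem is not proved in the present paper: it is quoted verbatim from \cite[Cor.~1.4]{ADH} as a background result, with no proof given here. There is therefore nothing in this paper to compare your attempt against.

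That said, your sketch is a faithful outline of the original proof in \cite{ADH}: one fixes a metric $g$ on $M$ with $\Q(M,g)$ close to $\si(M)$, constructs a family $g_\theta$ on $N$ via a neck modelled on pieces of $\Mc^{m,k}$, and analyzes the Yamabe minimizers $u_\theta$ by a concentration dichotomy. The split into the two cases --- mass remaining on the $M$-part versus mass escaping into the neck and producing a blow-up limit on some $\Mc^{m,k}$ --- is exactly the mechanism that forces the minimum $\min\{\Q(M,g),\La_{m,k}\}$ to appear, and your identification of the $\Omega^{(1)}$/$\Omega^{(2)}$ alternative matches precisely the definition of $\La_{m,k}$ recalled in Section~\ref{surg_theorems}. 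One small inaccuracy: the statement ``$g$ has constant scalar curvature $\Q(M,g)$'' needs the volume normalization to be literally true, but this is cosmetic. The genuinely delicate step you flag --- controlling the non-$L^p$ blow-up profile and verifying the scale-invariant lower bound that places it in $\Omega^{(2)}$ --- is indeed the heart of the argument in \cite{ADH}, and the codimension restriction $m-k\geq 3$ enters exactly where you say it does.
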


Note that a surgery from $M$ to $N$ is called spin preserving if the spin structures on $M$ and $N$ extend to a spin structure on the corresponding bordism. 
In particular this implies that the spin structures  on $M$ and $N$ coincide 
outside the region of surgery.

\begin{theorem}[Surgery-monotonicity for the spinorial Yamabe invariant, {\cite[Cor.~1.4]{ADH09}}]\label{surgspinthm} 
Assume that $N^m$ is a closed Riemannian spin manifold that is obtained from $M^m$ by a spin-preserving surgery of codimension $m-k\geq 2$.
Then \[ \sis(N)\geq \min\{ \sis(M), \Las_{m,k}\}.\] 
In the case $k=m-2$ the sphere $\mS^1$ carries the bounding spin structure, as explained in the Notations~\ref{not_list}.
\end{theorem}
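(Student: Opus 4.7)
The plan is to follow the gluing / degeneration scheme that produced the non-spin surgery theorem in \cite{ADH}, adapted to the spinorial functional $\cFs$ and the nonlinear Dirac equation. It suffices to show the following: given $\epsilon>0$ and a metric $g$ on $M$ with $\lm(M,g)>\tau^+(M)-\epsilon$ (so that $D^g$ is invertible), one can build on the manifold $N$ obtained by surgery a one-parameter family of metrics $g_\theta$, $\theta\in(0,\theta_0]$, such that
\[ \liminf_{\theta\to 0}\lm(N,g_\theta)\;\geq\;\min\{\,\lm(M,g),\;\wlm(\Mc^{m,k})\,\}-\epsilon. \]
Taking infima over $c\in[0,1]$ in the second term and using the definition $\Las_{m,k}=\wLas_{m,k}$ (together with $\Qs=4\tfrac{m-1}m(\lm)^2$) then yields the stated estimate.

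The construction of $g_\theta$: near the embedded sphere $\mS^k\subset M$ along which the surgery is performed, replace a thin tubular neighborhood by a truncated long neck whose geometry is modelled, after a conformal change, on a long piece of $\Mc^{m,k}=\mH^{k+1}_c\times\mS^{m-k-1}$ (with the bounding spin structure on the $\mS^1$ factor when $k=m-2$, so that the handle extends compatibly as a spin bordism and the Dirac operator on $\mH^{k+1}_c\times \mS^1$ remains invertible on $L^2$ — this is where Proposition~\ref{inv_Mc} is used). Outside a slightly larger neighborhood $g_\theta$ agrees with~$g$; as $\theta\to 0$ the neck becomes longer and, after suitable rescaling, Hausdorff-converges on compact subsets to $\Mc^{m,k}$ with the parameter $c$ arising from the chosen boundary conformal factors.

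The analytic core is a concentration-compactness argument for minimizing sequences $\phi_n\in C^\infty_c(N,S)$ of $\cFs_{g_\theta}$ as $\theta\to 0$. First, using the $L^q$--Euler-Lagrange equation $D^{g_\theta}\phi_n=\lambda_n|\phi_n|^{2/(m-1)}\phi_n$ together with the inner $L^s$ and Schauder estimates of Theorem~\ref{est} (i), (iii) and the Sobolev embedding~(iv), one establishes uniform local $C^{1,\alpha}$ bounds on spinors properly normalized in $L^{q^*}=L^{2m/(m-1)}$. Then one shows that at least one of two alternatives occurs along a subsequence: either (a) the $L^{q^*}$-mass of $\phi_n$ stays bounded away from the handle region and the limit $\phi_\infty$, extracted via the regularity theory, is a nontrivial solution of $D^g\phi_\infty=\lambda_\infty|\phi_\infty|^{2/(m-1)}\phi_\infty$ on $M$ with $\|\phi_\infty\|_{q^*}\leq 1$, giving $\lambda_\infty\geq\wlm(M,g)=\lm(M,g)$ by the closed-manifold identification $\wlm=\lm$; or (b) the mass concentrates in the neck, and after a rescaling matched to the neck parameter the $\phi_n$ converge in $C^1_{\rm loc}$ to a nontrivial solution on $\Mc^{m,k}$ with $L^{q^*}$-norm $\leq 1$, hence $\lambda_\infty\geq\wlm(\Mc^{m,k})\geq\Las_{m,k}$.

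The main obstacle will be the concentration-compactness dichotomy for the nonlinear Dirac equation: unlike the scalar Yamabe case, the functional $\cFs$ is strongly indefinite and $|\phi|^{2/(m-1)}\phi$ is only H\"older continuous, so one must run Lions-type profile decomposition directly in $\tilde H^q_1$, carefully rule out vanishing using the strict lower bound $\cFs\geq$ a positive constant guaranteed by the $L^s$-invertibility of $D$ on the model space (Proposition~\ref{inv_Mc}), and use the conformal covariance $\widetilde D(\phi\otimes f^{-(m-1)/2})=f^{-(m+1)/2}(D\phi)$ to transfer concentrating profiles to the limit space. Once this dichotomy is in place, the two cases of the alternative combine to give the surgery estimate; the bookkeeping with the renormalization \eqref{ren_inv} then converts the statement for $\lm$ into the statement for $\sis$ in the form claimed.
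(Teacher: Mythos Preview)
The paper does not prove Theorem~\ref{surgspinthm}; it is quoted as an external result from \cite[Cor.~1.4]{ADH09}, so there is no proof in the paper to compare your proposal against.

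That said, your sketch does follow the strategy of the cited reference: one builds the family $g_\theta$ on $N$ as in \cite{ADH}, considers (almost-)minimizers of $\cFs$, and runs a concentration--compactness dichotomy to conclude that either a nontrivial limit lives on $(M,g)$ or, after rescaling, on some model space $\Mc^{m,k}$. Two points are worth flagging. First, a notational slip: you repeatedly write $L^{q^*}=L^{2m/(m-1)}$, but in the paper's conventions $q=\tfrac{2m}{m-1}$ and $q^*=\tfrac{2m}{m+1}$; the spinors are normalized in $L^q$, not $L^{q^*}$. Second, the phrase ``It suffices to show \ldots'' with a fixed $c$ in $\wlm(\Mc^{m,k})$ is not quite right: the value of $c$ that appears in the limit is not chosen in advance but emerges from where the mass concentrates along the neck, which is why one must take the infimum over $c\in[0,1]$ a posteriori, leading to $\wLas_{m,k}$. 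Apart from these, your outline is a faithful summary of the argument in \cite{ADH09}, though of course the actual work there lies in making the dichotomy rigorous for the indefinite spinorial functional, which you correctly identify as the main technical hurdle.
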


\begin{figure}[h]
\hskip-0.2cm
\begin{tabular}{c|l}
Label & Meaning  \\[3mm] 
\hline
&\\[-3mm]
no $\rm spin$ label & associated to the Yamabe problem\\
$\rm spin$ & associated to the spinorial Yamabe problem for the Dirac operator\\
$*$ & defined as a variational problem\\
$\sim$ & defined by solutions of the associated Euler-Lagrange equation\\
&(as a nonlinear eigenvalue problem)\\
no $*$ and no $\sim$ & appears in an associated surgery result\\
&\\[-3mm]
$Q$&  for a fixed Riemannian (spin) manifold\\
$\Lambda$ & $=\inf_{c\in[0,1]}$ of the $Q$-invariants (decorated with the same labels as  $\Lambda$) for\\ & the model spaces $\Mc$\\
$\sigma$ & the supremum of the $Q$-invariants (with the same labels) for a\\ & (spin) manifold over all conformal classes
\end{tabular}
\caption{Overview on the notations \label{tab_not}}
\end{figure}

Since there is whole zoo of different $Q$- and $\Lambda$-invariants, we summarize the logic of our notation in Figure~\ref{tab_not}.

\section{Overview on the results}\label{sec_results}

Many of the inequalities established in this article are summarized in Figure~\ref{fig.result}. For example, $\sLas_{m,k}=\wLas_{m,k}\geq \sLa_{m,k}$ for all $k\leq m-2$. Other inequalities hold under additional assumptions, e.g. in the case $k\leq m-4$  and in the case $k\leq m-3 \leq 3$ we have $\sLa_{m,k}\geq \wLa_{m,k}$. Thus, together with previously mentioned relations we obtain 

\begin{thm}\label{theo3.1}
In the case $k\leq m-4$  and in the case $k\leq m-3 \leq 3$
\[ \sLas_{m,k}=\wLas_{m,k}=\Las_{m,k}\geq  \sLa_{m,k}=\wLa_{m,k}=\La_{m,k}.\]
\end{thm}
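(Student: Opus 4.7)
The plan is to assemble the stated chain of (in)equalities from four ingredients, each treated separately.

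The identity $\Las_{m,k} = \wLas_{m,k}$ is just the definition of $\Las_{m,k}$ adopted in the excerpt. The identity $\La_{m,k} = \wLa_{m,k}$ follows immediately from \cite[Thm.~3.1 and proof of Cor.~3.2]{ammann.dahl.humbert:p11b}, whose criterion is ``$k \leq m-4$ or $k = m-3 \leq 6$''. Both hypotheses of the theorem lie in this range: under $k \leq m-3 \leq 3$ one has $k = m-3 \leq 3 \leq 6$, and $k\leq m-4$ is explicitly covered.

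The two identities $\sLa_{m,k} = \wLa_{m,k}$ and $\sLas_{m,k} = \wLas_{m,k}$ express that, on each model space $\Mc^{m,k}$, the variationally defined infima $\Q$, $\Qs$ over compactly supported test functions/spinors agree with the quantities $\wQ$, $\wQs$ defined through solutions of the corresponding Euler--Lagrange equations. One direction ($\wQ \geq \Q$, $\wQs \geq \Qs$) is free, since a normalized solution is an admissible test function. For the converse I would take a minimizing sequence $(\phi_i)$ with compact support, use the cocompact isometric action on $\Mc^{m,k}$ (translations along the hyperbolic factor together with rotations of $\mS^{m-k-1}$) to recenter it, invoke $L^s$-invertibility of $D$ on $\Mc^{m,k}$ from Proposition~\ref{inv_Mc} in the spinorial case and the analogous coercivity of $L^g$ in the scalar case to obtain uniform $\tilde H_1^q$- (resp.\ $H_1^2$-)bounds, and then bootstrap via the regularity theorems of Section~\ref{sec_reg} (Theorem~\ref{est}, parts (i)--(iv)) to pass to a nontrivial weak limit in $C^1\cap L^\infty\cap L^q$ (resp.\ $C^2\cap L^\infty\cap L^p$) solving the Euler--Lagrange equation.

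The remaining inequality $\wLas_{m,k} \geq \wLa_{m,k}$ is a conformal Hijazi inequality on the model spaces: given a solution $\phi$ of $D^g\phi = \lambda |\phi|^{2/(m-1)}\phi$ on $\Mc^{m,k}$ with $\phi\in L^\infty\cap L^2\cap C^1$, set $u = |\phi|^{(m-1)/(m-2)}$; combining the Schr\"odinger--Lichnerowicz identity with a refined Kato inequality yields pointwise, off the zero set of $\phi$,
\[ L^g u \;\leq\; \tfrac{4(m-1)}{m}\,\lambda^2\, u^{(m+2)/(m-2)}, \]
from which $\wQ(\Mc^{m,k},g_c) \leq \tfrac{4(m-1)}{m}\,\wlm(\Mc^{m,k},g_c)^2 = \wQs(\Mc^{m,k},g_c)$ by integration and the relation $\int u^p\,\vo_g = \int |\phi|^{2m/(m-1)}\,\vo_g$. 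Taking the infimum over $c\in [0,1]$ gives $\wLa_{m,k} \leq \wLas_{m,k}$.

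The main obstacle is the variational-to-solution identification on the noncompact space $\Mc^{m,k}$: loss of mass for minimizing sequences must be ruled out by a concentration--compactness argument adapted to the product-hyperbolic geometry. In the spinorial case the Dirac operator is first order and indefinite, so standard Palais--Smale techniques do not apply directly; the proof must exploit the renormalization $\Qs = \tfrac{4(m-1)}{m}(\lm)^2$ together with Proposition~\ref{inv_Mc} (where the assumption $\tfrac{m-k-1}{2} > ck|1/q - 1/2|$ silently enforces the dimension restriction) to get strong $L^q$-convergence of the spinor sequence away from its zero set. A secondary subtlety in the Hijazi step is that $u = |\phi|^{(m-1)/(m-2)}$ is only $C^1$ where $\phi$ does not vanish, requiring an approximation argument near $\{\phi = 0\}$.
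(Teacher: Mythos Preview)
Your identifications $\Las_{m,k}=\wLas_{m,k}$ (definition) and $\La_{m,k}=\wLa_{m,k}$ (citation) are correct, but the other two ingredients each contain a real gap.

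\textbf{The direction you call ``free'' is not free.} A solution $v\in C^2\cap L^\infty\cap L^2$ of $L^gv=\mu v^{p-1}$ on the noncompact space $\Mc^{m,k}$ has no reason to be compactly supported, so it is not admissible in the variational definition of $\Q$. One has to multiply by cut-offs $\eta_r$ and control the error terms as $r\to\infty$; in the spinorial case this additionally requires $\phi\in L^{q^*}$, which comes from the $L^{q^*}$-invertibility of $D$ (Proposition~\ref{inv_Mc}). This is exactly Section~\ref{sec_cut} of the paper (Corollaries~\ref{La_leq_wLa} and~\ref{Qs_leq_wQs}).

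\textbf{The Hijazi step does not land where you claim.} With the exponent corrected to $u=|\phi|^{(m-2)/(m-1)}$ (you inverted it), the Schr\"odinger--Lichnerowicz plus refined Kato computation yields only the \emph{inequality} $L^g u \leq \tfrac{4(m-1)}{m}\lambda^2\,u^{(m+2)/(m-2)}$. Thus $u$ is a subsolution, not a solution of the Yamabe equation, and is \emph{not} admissible for $\wQ$. What one does obtain, after integrating against $u$ and showing $u\in H_1^2$, is $\Q(\Mc^{m,k})\leq \tfrac{4(m-1)}{m}\lambda^2$, i.e.\ $\wQs\geq \Q$ rather than $\wQs\geq \wQ$ (Corollary~\ref{Q_leq_wQs}). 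Establishing $u\in H_1^2$ is itself nontrivial on $\Mc^{m,k}$: the paper bootstraps $L^s$-invertibility of $D$ to get $\phi\in L^{2(m-2)/(m-1)}$ (Proposition~\ref{add_reg_MC}).

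\textbf{On the remaining direction.} For $\wLa_{m,k}\leq\sLa_{m,k}$ and $\wLas_{m,k}\leq\sLas_{m,k}$ your concentration--compactness sketch is a reasonable route, and the paper does carry out a version of it pointwise in~$c$ (Corollaries~\ref{wQ_leq_Q},~\ref{wQs_leq_Qs}) via existence theorems for minimizers --- but those theorems need the strict inequality $\Q(\Mc^{m,k})<\Q(\mS^m)$ to exclude bubbling, a separate input you do not mention. For the $\Lambda$-statement the paper instead uses a different and more robust argument (Proposition~\ref{wLas_leq_Las}): apply the surgery monotonicity theorems to $\mS^m\rightsquigarrow S^{k+1}\times S^{m-k-1}$, note that the resulting metrics $g_\theta$ converge in the pointed $C^\infty$ sense to every $\Mc^{m,k}$, and use upper semicontinuity of $\Q$, $\Qs$ under such limits. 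It is precisely this step that imports the dimension restriction $k\leq m-4$ or $k=m-3\leq 3$ in the non-spin case.
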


\begin{figure}[h]
\centering
\savenotes
\begin{tikzpicture}
 \path[draw] (0,0) node {$\sLas_{m,k}$}  (0,-0.4) to[bend right] (5,-.4)  (5,0) node {$\wLas_{m,k}$};
 \path[draw]  (0,0.4) to[bend left] (5,.4) ;
 \path[draw] (2.5,1.4) node {$\leq$ \tiny{(for $k\leq m-2$, Cor.~\ref{Qs_leq_wQs})}};
 \path[draw] (2.5,-.4) node {$\geq$ \tiny{(for $k\leq m-2$, Prop.~\ref{wLas_leq_Las})}};

 \path[draw] (0,-5) node {$\sLa_{m,k}$}  (0,-5.4) to[bend right] (5,-5.4)  (5,-5) node {$\wLa_{m,k}$};
 \path[draw]  (0,-4.6) to[bend left] (5,-4.6) ;
 \path[draw] (2.5,-4.4) node {$\leq$ \tiny{(Cor.~\ref{La_leq_wLa})}};
 \path[draw] (2.5,-6.4) node {$\geq$ \tiny{(for $k\leq m-4$ or $k\leq m-3\leq 3$, Prop.~\ref{wLas_leq_Las})}};

    \node[anchor=east] at (0,-4.6) (text) {};
  \node[anchor=west] at (5,-0.4) (description) {};
  \draw (description) .. controls ([yshift=-4cm] description) and ([yshift=4cm] text) .. (text);
 \path[draw] (2.5,-2) node {$\leq$ \tiny{(for $k\leq m-2$, Cor.~\ref{Q_leq_wQs})}};

 \path[draw] (7,0) node {$\Qs (\Mc^{m,k})$}  (7,-0.4) to[bend right] (12,-.4)  (12,0) node {$\wQs (\Mc^{m,k})$};
 \path[draw]  (7,0.4) to[bend left] (12,.4) ;
 \path[draw] (9.5,1.4) node {$\leq$  \tiny{(for $k\leq m-2$,
Cor.~\ref{Qs_leq_wQs})}  };
 \path[draw] (9.5,-.4) node {$\geq$ \tiny{(Cor.~\ref{wQs_leq_Qs}$^{[1]}$)}};
 
 \path[draw] (7,-5) node {$\Q (\Mc^{m,k})$}  (7,-5.4) to[bend right] (12,-5.4)  (12,-5) node {$\wQ (\Mc^{m,k})$};
 \path[draw]  (7,-4.6) to[bend left] (12,-4.6) ;
 \path[draw] (9.5,-4.4)  node {$\leq$ \tiny{(Cor.~\ref{La_leq_wLa})}};
 \path[draw] (9.5,-6.4) node {$\geq$ \tiny{(Cor.~\ref{wQ_leq_Q}$^{[2]}$)}};

     \node[anchor=east] at (7,-4.6) (text) {};
  \node[anchor=west] at (12,-0.4) (description) {};
  \draw (description) .. controls ([yshift=-4cm] description) and ([yshift=4cm] text) .. (text);
 \path[draw] (9.5,-2) node {$\leq$ \tiny{(for $k\leq m-2$, Cor.~\ref{Q_leq_wQs})} };

  \end{tikzpicture}
\spewnotes
\caption{Summary of the results for the $Q$-invariants of the model spaces (right) and the corresponding $\Lambda$-invariants (left).\newline
  $\quad [1]$: for $(m-k-1)^2>c^2k(k+1)$ and $\wQs(\Mc^{m,k})< \Qs(\mS^m)$ \newline
  $\quad [2]$: for $(m-k-1)(m-k-2)>c^2k(k+1)$, $c\in [0,1)$ or for $k\leq m-3$, $c=1$\label{fig.result}
}
\end{figure}

Thus, in most of the cases the  inequality $\Las_{m,k}\geq \La_{m,k}$ conjectured in the introduction holds. Together with the explicit positive lower bounds for $\La_{m,k}$ in \cite{ammann.dahl.humbert:p12, ammann.dahl.humbert:p11b}, we then obtain explicit positive lower bounds for $\Las_{m,k}$. Theorem~\ref{theo3.1} does not provide for $\Las_{m,m-3}$ for $m>6$. But nevertheless our techniques also allow to obtain explicit positive lower bounds for  $\Las_{m,m-3}$ for $m>6$, see Section~\ref{km3}.

The right hand side of Figure~\ref{fig.result} gives relations between the $Q$-invariants of the model spaces. 
Some of them require additional assumptions which are given as footnotes. 
The parameter $c$ ranges in the interval $[0,1]$. 
However, the case $c=1$ is very special as then $\mathbb{M}_1^{m,k}$ is 
conformal to a subset of $\mS^m$ which allows much stronger statements. 
This is summarized in Section~\ref{c=1}. Another special case is $k=m-1$. 
These invariants do not have similar geometric applications. But for the sake of completeness we summarize in Section~\ref{km1}.

As already mentioned in the introduction, the explicit positive lower bounds 
for $\Las_{m,k}$, $ k\leq m-3$ lead to bordism invariant. As we only want to give an overview here, many proofs will be given later, i.e. in 
section~\ref{sec.bordism.arg}.

Let $m\geq 5$. We set 
  $$\Las_m:=\min \{\Las_{m,2},\Las_{m,3},\ldots,\Las_{m,m-3}\}.$$
{}From Theorem~\ref{theo3.1}, Section~\ref{km3}, and results in  \cite{ammann.dahl.humbert:p11b} and \cite{ammann.dahl.humbert:p12} we obtain explicit positive lower bounds for 
$\Las_m$, summarized in Table~\ref{tab:Las-m} for low dimensions.

\begin{table}
  \centering
  \begin{tabular}{c||c|c|c|c|c|c|c|c|c} 
    $m$ & 5 & 6 &  7& 8& 9& 10 & 11&12&13\\
    \hline
    $\Q(\mS^m)$  & 80.0& 96.3 &
    113.5 & 130.7 & 147.9 & 165.0& 182.2 &199.3 &216.4\\ 
    \hline
    $\Las_m\geq $  & 45.1 & 50.0 &
    65.2 & 78.7 & 91.8 & 104.9& 118.1 &131.5& 145.0 
\\
    \hline
    $Q^*(\mathbb{H}P^2\times \mathbb{R}^{m-8})\geq$  & ---& ---&---& 121.4& 138.5 & 97.3 & 135.9 & 158.7 & 178.0
  \end{tabular}
\\[1mm]
  \caption{Some explicit lower bounds for $\Las_m$: The values are rounded -- 
$\Q(\mS^m)$ is rounded to the nearest multiple of $1/10$ and the lower bounds for $\Las_m$ and   $Q^*(\mathbb{H}P^2\times \mathbb{R}^{m-8})$ are always 
rounded down.  Note that $Q^*(\mathbb{H}P^2)=121.4967...$ is attained by the canonical metric on $\mathbb{H}P^2$, due to Obata's Theorem. 
}
  \label{tab:Las-m}
\end{table}



Using standard techniques from bordism theory (see Section~\ref{sec.bordism.arg} for details) one obtains several conclusions:

\begin{proposition}\label{sis.bound}
Let $M$ be an $m$-dimensional closed connected, simply connected spin manifold, $\alpha(M)= 0$. 
If 
$5\leq m \leq 7$, then 
  \begin{align*} \sis(M)\geq \Las_m.\end{align*}

For $m\geq 11$ or $m=8$ we have 
  \begin{align*}\sis(M)\geq \min\{\Las_m, \Q(\mathbb{H}P^2\times \mathbb{R}^{m-8})\}.\end{align*}
  For $m=9,10$ we have 
  \begin{align*}\sis(M)\geq \min\{\Las_{m,1}, \Las_m,  \Q(\mathbb{H}P^2\times \mathbb{R}^{m-8})\}.\end{align*}
\end{proposition}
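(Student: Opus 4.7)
The plan is to combine bordism theory (Stolz's theorem on the kernel of $\alpha$) with iterated application of the Spinorial Surgery Theorem~\ref{surgspinthm}. The overall structure is: represent $M$ by a well-chosen bordism to an explicit model; turn that bordism into a sequence of surgeries of controlled codimension; then propagate $\sis$-bounds back from the model via Theorem~\ref{surgspinthm}.

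First I would reduce to a model manifold. By Stolz's structure theorem for the kernel of $\alpha\colon\Omega^{\rm spin}_m\to KO_m$ (combined with $\Omega^{\rm spin}_m=0$ for $m=5,6,7$), any simply connected closed spin $m$-manifold $M$ with $\alpha(M)=0$ and $m\geq 5$ is spin-bordant to a disjoint union (or connected sum, after further bordism) of total spaces of $\mathbb{H}P^2$-bundles over simply connected bases, respectively to $\mS^m$ when no $\mathbb{H}P^2$-factor is needed. After additional bordism on the base (which in turn is controlled by lower-dimensional bordism groups), one can in the relevant dimensions further replace the target by $\mS^m$ (for $m=5,6,7$), by $\mathbb{H}P^2$ (for $m=8$), or by a product $\mathbb{H}P^2\times\mS^{m-8}$ (for $m\geq 9$ after trivializing the base).

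Second, I would express the bordism as a sequence of surgeries via a handle decomposition. Since both ends are connected and simply connected and $m\geq 5$, handle cancellation in the style of Smale allows us to arrange handles of index $j$ with $3\leq j\leq m-2$, except in the exceptional low dimensions $m=9,10$, where a $2$-handle (corresponding to codimension $m-1$ surgery, i.e.\ the threshold $\Las_{m,1}$) is generally unavoidable. A handle of index $j$ corresponds to a spin-preserving surgery of codimension $m-j+1\in\{3,\dots,m-2\}$ (respectively $\{3,\dots,m-2\}\cup\{m-1\}$ in dimensions $9,10$). Applying Theorem~\ref{surgspinthm} iteratively (and using that each surgery is reversible) yields
\[\sis(M)\;\geq\;\min\bigl\{\sis(N),\ \Las_{m,k_1},\dots,\Las_{m,k_\ell}\bigr\},\]
where $N$ is the terminal manifold and the $k_i$ are the codimension indices appearing. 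Collecting codimensions produces $\min_i \Las_{m,k_i}\geq \Las_m$ (together with $\Las_{m,1}$ in dimensions $9,10$).

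Finally, I would estimate $\sis(N)$ separately in the three cases: for $N=\mS^m$ one has $\sis(\mS^m)=\Q(\mS^m)$, and the lower bounds in Table~\ref{tab:Las-m} show this exceeds $\Las_m$; for $N$ containing an $\mathbb{H}P^2$-factor, one performs the additional surgeries along spheres lying inside the $\mS^{m-8}$ factor of $\mathbb{H}P^2\times\mS^{m-8}$, and the relevant threshold is governed by the noncompact limit $\mathbb{H}P^2\times\mR^{m-8}$ arising as the rescaled model of such a surgery neighborhood, producing the term $\Q(\mathbb{H}P^2\times\mR^{m-8})$ in the bound. The main obstacle is the handle-index analysis in dimensions $m=9,10$, where the $2$-handle cannot be eliminated and forces $\Las_{m,1}$ into the bound; a secondary subtle point is justifying the replacement of the compact factor $\mS^{m-8}$ by the noncompact model $\mR^{m-8}$ in the surgery threshold, which requires extending the surgery-monotonicity philosophy underlying the $\La$-invariants to surgery along spheres whose normal model is twisted by the $\mathbb{H}P^2$-factor.
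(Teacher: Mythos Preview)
Your overall architecture --- Stolz's theorem, then handle/surgery control, then Theorem~\ref{surgspinthm} --- matches the paper. But two steps in your reduction are not justified and do not reflect what actually happens.

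\textbf{The model is a bundle, not a product.} Stolz's theorem yields a spin bordism from $M$ to the total space $M_0$ of an $\mathbb{H}P^2$-bundle over some base $Q$ with structure group $PSp(3)$; the extended version lets one take $Q$ connected for $m\geq 9$ and simply connected for $m\geq 11$. You cannot in general ``trivialize the base'' to reach $\mathbb{H}P^2\times\mS^{m-8}$: there is no mechanism that turns an arbitrary $PSp(3)$-bundle into a product via spin bordisms while controlling surgery codimensions. The paper keeps $M_0$ as is.

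\textbf{The term $\Q(\mathbb{H}P^2\times\mathbb{R}^{m-8})$ does not come from a twisted surgery model.} Your proposed route --- surgeries in an $\mS^{m-8}$-factor whose rescaled normal model is $\mathbb{H}P^2\times\mathbb{R}^{m-8}$ --- would require a new surgery-monotonicity theorem with non-round model spaces, which is not available. The paper instead bounds $\sis(M_0)$ directly: the conformal Hijazi inequality on the closed manifold $M_0$ gives $\sis(M_0)\geq \si(M_0)$, and a result of Streil shows $\si(M_0)\geq \Q(\mathbb{H}P^2\times\mathbb{R}^{m-8})$ for \emph{any} such bundle total space. This is where the term enters, with no twisted thresholds needed. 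For $m=8$ one first does $0$-dimensional surgeries to connect the copies of $\mathbb{H}P^2$, obtaining $M_2=\mathbb{H}P^2\#\cdots\#\mathbb{H}P^2$, and then uses $\sis(M_2)\geq \Q(\mathbb{H}P^2)$.

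\textbf{Why $\Las_{m,1}$ appears for $m=9,10$.} It is not that Smale cancellation fails in these dimensions; it works for $m\geq 5$. The point is that for $m=9,10$ the base $Q$ is only guaranteed connected, not simply connected, so $M_0$ itself may fail to be simply connected. Going from a non-simply-connected $M_0$ to the simply connected $M$ then forces $1$-dimensional surgeries (codimension $m-1$), bringing $\Las_{m,1}$ into the minimum. For $m\geq 11$ the base, hence $M_0$, can be taken simply connected and surgeries of dimension $2\leq \ell\leq m-3$ suffice.
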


Note that by definition $\alpha(M)\neq 0$ implies that there are no invertible
Dirac operators, thus by definition $\sis(M)=0$.

We conjecture  $\Las_m\leq \Q(\mathbb{H}P^2\times \mathbb{R}^{m-8})$ for all $m\geq 11$, which would imply $ \sis(M)\geq \Las_m$ for all closed simply connected spin manifolds $M$ with dimension $m\geq 5$, $m\neq 9,10$.

A similar bound also exists for non-simply connected manifolds, namely in this case for $m\neq 9,10$    
  $$\sis(M)\geq \min\{\Las_m,\Las_{m,m-2},\Q(\mathbb{H}P^2\times \mathbb{R}^{m-8})\}>0,$$
  and for $m=9,10$
  $$\sis(M)\geq \min\{\Las_{m,1}, \Las_m,\Las_{m,m-2}, \Q(\mathbb{H}P^2\times \mathbb{R}^{m-8})\}>0,$$
however this positive lower bound is not explicit as no explicit lower bound 
for $\Las_{m,m-2}$ is currently available. Numerical calculations and 
some further assumptions indicate that $\Las_{m,m-2}<\Las_m$. 

\begin{proposition}\label{sis.bound.fund}
Assume that $M$ is an $m$-dimensional closed connected spin manifold with $m\geq 5$.
We consider the bordism
groups $\Omega_m^{\rm spin}(B\Gamma)$, $\Gamma:=\pi_1(M)$ where the boundaries and the bordisms are spin manifolds together with  
maps to $B\Gamma$. Let $c_M\colon M\to B\Gamma$ be a classifying map of the universal covering of $M$, i.e.,\ the map which induces 
an isomorphism from $\pi_1(M)$ to $\Gamma=\pi_1(B\Gamma)$. Let $[N,f]=[M,c_M] \in  \Omega_m^{\rm spin}(B\Gamma)$, and let $N$ be connected. 
Then
  $$\sis(N)\geq \min\{\sis (M), \Las_m, \Las_{m,m-2}\}.$$
If $N$ is connected and if $f$ induces an isomorphism from $\pi_1(N)$ to $\Gamma$, then
  \begin{align}\sis(N)\geq \min\{\sis (M), \Las_m\}.\label{sis_in}\end{align}
  \end{proposition}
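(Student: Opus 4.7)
\smallskip
\noindent\textbf{Proof plan for Proposition~\ref{sis.bound.fund}.}
The plan is to reduce both inequalities to an iterated application of the Spinorial Surgery Theorem~\ref{surgspinthm} along a sequence of spin-preserving surgeries connecting $M$ to $N$, extracted from a handle decomposition of a carefully prepared spin bordism representing $[N,f]=[M,c_M]$ in $\Omega_m^{\rm spin}(B\Gamma)$.

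First I would use the bordism hypothesis to pick a compact spin $(m{+}1)$-manifold $W$ with $\partial W=M\sqcup\overline{N}$ together with a map $F\colon W\to B\Gamma$ restricting to $c_M\sqcup f$ and compatible with the spin structures. Because $\pi_2(B\Gamma)=0$, the classifying map $c_M$ is automatically a $2$-equivalence, and under the hypothesis of~\eqref{sis_in} so is $f$. Following the Milnor--Wall surgery procedure in the interior (spin version as in~\cite{ADH09}), I would then perform $1$- and, when applicable, $2$-surgeries in $\mathrm{int}(W)$ rel $\partial W$ in order to raise the connectivity of $F$: in the iso case to a full $2$-equivalence, so that both pairs $(W,M)$ and $(W,N)$ become $2$-connected; in the general case merely to a $\pi_1$-isomorphism, together with the pair $(W,M)$ still being $2$-connected (which uses only the $2$-equivalence property of $c_M$, plus further interior $2$-surgery to kill elements of $\pi_2(W)$ not coming from $\pi_2(M)$).

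Next I would invoke Smale handle cancellation (valid since $m\geq 5$) to choose a handle decomposition of $W$ relative to $M$ whose handle indices avoid the range $j\leq 2$ (by $2$-connectedness of $(W,M)$) and, in the iso case, also avoid $j\geq m-1$ (by $2$-connectedness of $(W,N)$, used via Poincar\'e--Lefschetz duality of $W$). The surviving indices are therefore $\{3,\dots,m-2\}$ in the iso case and (after also removing top handles using connectedness of $N$) $\{3,\dots,m-1\}$ in the general case. Each elementary cobordism realises a spin-preserving surgery along an embedded sphere $S^{k}$ with $k=j-1$, so we obtain a sequence
\[M=M_0\ \rightsquigarrow\ M_1\ \rightsquigarrow\ \cdots\ \rightsquigarrow\ M_n=N\]
of spin-preserving surgeries with $k_i\in\{2,\dots,m-3\}$ in the iso case and $k_i\in\{2,\dots,m-2\}$ in the general case. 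All codimensions are $\geq 2$, so Theorem~\ref{surgspinthm} applies at every step, and an induction yields
\[\sis(N)\ \geq\ \min\Bigl\{\sis(M),\ \min_{i}\Las_{m,k_i}\Bigr\}.\]
Since $\min_i\Las_{m,k_i}\geq\Las_m$ in the iso case and $\min_i\Las_{m,k_i}\geq\min\{\Las_m,\Las_{m,m-2}\}$ in the general case, both claimed inequalities follow.

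The step I expect to be the main obstacle is the handle-cancellation in the general case: one must certify that the Smale rearrangement, using only that $c_M$ is a $2$-equivalence and that $N$ is connected (but without any $\pi_1$-information at $N$), really can eliminate all $M$-side handles of index $m$ (which would correspond to codimension-$1$ surgeries not covered by Theorem~\ref{surgspinthm}) and of index $m+1$, while preserving the spin structure and the map to $B\Gamma$. This is the same type of delicate move that underlies the bordism-invariance statements in~\cite[\S 1.4]{ADH} and~\cite{ADH09}; once it is in hand, the numerical bookkeeping above is immediate.
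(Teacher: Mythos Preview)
Your proposal follows essentially the same route as the paper's proof: take the spin bordism $(W,F)$ from $(M,c_M)$ to $(N,f)$, perform interior surgeries of dimensions $0$, $1$, and $2$ to make $W$ connected, $F_*$ a $\pi_1$-isomorphism, and $(W,M)$ $2$-connected, then read off a handle decomposition with restricted indices and apply Theorem~\ref{surgspinthm} inductively. The index ranges you obtain and the resulting numerical bounds match the paper's.

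One remark on the comparison: the paper asserts without further comment that after these modifications ``$N\hookrightarrow W$ is $1$-connected'', which is exactly what is needed to eliminate the $M$-side handles of index $m$ (equivalently the $N$-side $1$-handles) and land on surgery dimensions $\{2,\dots,m-2\}$ in the general case. You are right to flag this as the delicate point: $1$-connectedness of $(W,N)$ amounts to surjectivity of $\pi_1(N)\to\pi_1(W)\cong\Gamma$, i.e.\ surjectivity of $f_*$, which is not part of the hypothesis in the general case. The paper does not spell out how this is arranged; your explicit acknowledgement of this obstacle is therefore appropriate rather than a defect of your plan. In the iso case your argument that $(W,N)$ can be made $2$-connected (hence indices $\{3,\dots,m-2\}$ and surgeries of dimension $\{2,\dots,m-3\}$) is the standard one and agrees with what the paper intends.

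Minor point: you mention only $1$- and $2$-surgeries in the interior; the paper also uses $0$-dimensional surgeries first to connect $W$ and to make $F_*$ surjective on $\pi_1$ before killing the kernel with $1$-surgeries. This is routine and you clearly have it in mind.
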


Note that every class in $\Omega_m^{\rm spin}(B\Gamma)\to \mathbb{R}$ can be written as $(M,c_M)$.

By applying \eqref{sis_in} twice, it follows from Proposition~\ref{sis.bound.fund} that there is a well-defined map
$s^{\rm spin}\colon \Omega_m^{\rm spin}\to \mathbb{R}$ such that for all connected, simply connected spin manifolds $M$
\[ s^{\rm spin} ([M])=\min \{ \sis(M), \Las_{m}\}. \]

Thus if $M$ is a connected spin manifold, we have
\[ \sis(M)\geq \min \{ s^{\rm spin} ([M]), \Las_{m,m-2}\}. \]

It follows from standard arguments of surgery theorem that
\[ s^{\rm spin} ([M]+[N])\geq \min\{ s^{\rm spin} ([M]), s^{\rm spin} ([N])\}.\]

Thus, 
\[ \Omega_m^{{\rm spin}, > t}:=\{ [M]\in \Omega_m^{\rm spin} \ |\ s^{\rm spin}([M])>t\}\]
is a subgroup of $ \Omega_m^{\rm spin}$.
For example,  $\Omega_m^{{\rm spin}, > 0}$ is the kernel of the index map $\alpha\colon \Omega_m^{{\rm spin}}\to KO_m$.

Similarly as above, for an arbitrary finitely presented group $\Gamma$ we obtain a well-defined map 
$s_\Gamma^{\rm spin}\colon \Omega_m^{\rm spin}(B\Gamma)\to \mathbb{R}$ as follows: For every $[M,f]\in \Omega_m^{\rm spin}(B\Gamma)$ where $M$ is connected and $f$ induces an isomorphism from $\pi_1(M)$ to $\Gamma$ we have
  $$s_\Gamma^{\rm spin}([M,f])= \min \{ \sis (M), \Las_{m,1} , \Las_m\}.$$
  
In this case the minimum includes the constants $\Las_{m,1}$ since it is required to show that 
\[ s^{\rm spin}_\Gamma ([M,f]+[N,g])\geq \min\{ s_\Gamma^{\rm spin} ([M,f]), s_\Gamma^{\rm spin} ([N,g])\}.\]
Then, analogously as above, 
\[ \Omega_m^{{\rm spin}, > t}(B\Gamma):=\{ [M,f]\in \Omega_m^{\rm spin}(B\Gamma) \ |\ s_\Gamma^{\rm spin}([M,f])>t\}\]
is a subgroup of $ \Omega_m^{\rm spin}(B\Gamma)$.\\

Assume that there is a closed simply connected spin manifold $M$ of dimension $m \geq 5$ 
with $\sis(M)< \Las_m$.
For such manifolds one would have: 
If $N$ is a simply connected closed spin manifold spin-bordant to $M$, 
then $\sis(N)= \sis(M)$. An advantage of this bordism result is that we have explicit positive lower bounds for $\Las_m$, in contrast to a similar result for the classical Yamabe invariant.

As a consequence by the Hijazi inequality we have 
$\si(N)\leq \sis(M)$, i.e., $\sis(M)$ is an upper bound for the Yamabe invariant for all simply connected manifolds in $[M]$.

This question is related to the open problem whether there is a manifold in dimension $m\geq 5$ with Yamabe invariant different from $0$ and $\si(\mS^m)$.
If one finds an $M$ as above, all simply connected manifolds in the spin bordism class of $M$ would have a Yamabe invariant in $(0, \si(\mS^m))$. \\

Many of the statements on the right hand side of Figure~\ref{fig.result} are 
still valid if one replaces the model spaces by arbitrary manifolds of bounded geometry, 
see Sections~\ref{sec_cut} and~\ref{sec_hijazi}. 
The inequalities in Section~\ref{sec_hijazi} are noncompact versions of the Hijazi inequality which is of central importance of our article. 
The reader should be aware that there are different ways to generalize from the compact to the noncompact setting. 
We have positive and negative results for the generalization of the Hijazi inequality to the noncompact setting, see  Section~\ref{sec_hijazi}.  
Our investigations also need regularity statements for the Euler-Lagrange equation of the spinorial functional. 
For this purpose we have included Section~\ref{sec_reg} which might be of independent interest and which goes beyond the requirements of the following sections.

\section{Improvements of regularity for the Dirac Euler-Lagrange equation}\label{sec_reg}

Let $(M^m,g)$ be a Riemannian spin manifold of bounded geometry. In this section, we consider a spinor $\phi\in L^q$ and $\phi\in L^s_{\rm loc}$ for an $s>q$ that fulfills 
\begin{align} D\phi=\lambda|\phi|^{q-2}\phi \quad\text{\ weakly},\label{eq_we}
\end{align}
i.e., in the distributional sense, where as always $q=\frac{2m}{m-1}$.  Note that from  $\phi\in L^s_{\rm loc}$ for an $s>q$ it follows with the methods of \cite[Thm.~5.2]{Ammha} that $\phi$ is $C^{1,\alpha}$ for all $\alpha\in (0,1)$. We omit the proof of this local statement since the proof is completely analogous as in \cite{Ammha}. Furthermore, we will only use the fact that $\phi$ is continuous which is part of the assumptions in the applications of this subsection.

We want to further examine the regularity of $\phi$.
First, we will show that $\phi\in L^\infty$. For that we need the following auxiliary lemma.

\begin{lemma} \label{seq-bddratio} Fix $\beta, R, \delta >0$. Let $\phi\in \Gamma(S_M)$ be continuous with $\Vert \phi\Vert_{L^\infty}=\infty$.  

Then there is a sequence $(x_i)_{i\in \mN}$ in $M$ with $|\phi(x_i)|\geq i$ and
\[ |\phi(x_i)|^{-1} \Vert \phi\Vert_{L^\infty(B_{i}^R)}\leq 1+\delta\]
where $B_{i}^R:= B_{R\,|\phi(x_i)|^{-1/\beta}} (x_i)$.
\end{lemma}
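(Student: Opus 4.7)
The plan is to use a Hamilton-style point-picking iteration. Fix $i \in \mathbb{N}$. Since $\|\phi\|_{L^\infty} = \infty$, choose a starting point $y_0 \in M$ with $|\phi(y_0)| \geq i$. Given $y_n$, consider the ball $U_n := B_{R|\phi(y_n)|^{-1/\beta}}(y_n)$. If $\sup_{U_n}|\phi| \leq (1+\delta)|\phi(y_n)|$, declare the iteration finished and set $x_i := y_n$. Otherwise choose $y_{n+1} \in U_n$ with $|\phi(y_{n+1})| > (1+\delta)|\phi(y_n)|$ and continue. If the process halts at some finite $n$, then by construction $|\phi(x_i)| \geq |\phi(y_0)| \geq i$ and $|\phi(x_i)|^{-1} \|\phi\|_{L^\infty(B_i^R)} \leq 1+\delta$, which is exactly the assertion.

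The main point is therefore to show the iteration cannot continue indefinitely. Suppose for contradiction it does. By induction $|\phi(y_n)| > (1+\delta)^n |\phi(y_0)| \geq (1+\delta)^n i$, so in particular $|\phi(y_n)| \to \infty$. Also
\[
\dist(y_n, y_{n+1}) < R\,|\phi(y_n)|^{-1/\beta} \leq R\,i^{-1/\beta}(1+\delta)^{-n/\beta}.
\]
Since $\delta, \beta > 0$, the ratio $(1+\delta)^{-1/\beta}$ lies strictly below $1$, so $\sum_n \dist(y_n, y_{n+1})$ is a convergent geometric series and $(y_n)$ is a Cauchy sequence. Bounded geometry of $(M,g)$ includes completeness, so $y_n$ converges to some $y_\infty \in M$. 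Continuity of $|\phi|$ then forces $|\phi(y_\infty)| = \lim_{n\to\infty} |\phi(y_n)| = \infty$, contradicting the fact that $\phi(y_\infty)$ is an element of the finite-dimensional Hermitian fiber $S_{y_\infty}$.

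Hence the iteration terminates after finitely many steps, yielding the desired $x_i$. Performing this construction for each $i \in \mathbb{N}$ produces the sequence claimed by the lemma. The only delicate point is the summability of $\sum \dist(y_n,y_{n+1})$, which is guaranteed precisely by the hypothesis $\beta > 0$ together with $\delta > 0$; if $\beta$ were allowed to vanish, the radii in the iteration would not shrink and the argument would fail, so this is really where the geometric structure of the statement is used.
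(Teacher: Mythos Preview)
Your proof is correct and follows essentially the same approach as the paper's own proof: a point-picking iteration producing a sequence along which $|\phi|$ grows geometrically while successive distances sum geometrically, forcing the sequence to stay in a bounded set and yielding a contradiction with continuity of $\phi$. The only cosmetic difference is that the paper phrases the argument as a single global contradiction (assuming the conclusion fails for all $x$ with $|\phi(x)|\geq C$), whereas you run the iteration separately for each $i$; the substance is identical.
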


\begin{proof}  Let $d(.,.)$ denote the distance in $(M,g)$, and fix $R,\delta >0$. We prove the claim by contradiction: Assume that there is a constant $C>0$ such that for all $x\in M$ with $|\phi(x)|\geq C$  there is $y_x$ with $d(x,y_x)< R\,|\phi(x)|^{-1/\beta} $ and $|\phi(y_x)|>(1+\delta)\,|\phi(x)|$. Then, we define a sequence $x_i$ recursively by choosing $x_0\in M$ with $|\phi (x_0)|\geq C$ and $x_{i+1}=y_{x_i}$ for all $i\geq 0$. Then, $|\phi(x_i)|\geq (1+\delta)^{i}|\phi(x_0)|\geq  (1+\delta)^{i}C\to \infty $ as $i\to \infty$. But, 
  \[d(x_i, x_0)\leq \sum_{j=0}^{i-1} d(x_{j+1},x_{j})\leq \sum_{j=0}^{i-1}  R\,|\phi(x_j)|^{-\frac{1}{\beta}} \leq R C^{-\frac{1}{\beta}}  \sum_{j=0}^{i-1}  (1+\delta)^{-\frac{j}{\beta}} \leq  \frac{ R C^{-\frac{1}{\beta}}}{1-(1+\delta)^{-\frac{1}{\beta}}}<\infty\]
  which then contradicts the continuity of $\phi$.
\end{proof}

\begin{lemma}\label{bdd-sol}  Let $(M^m,g)$ be of bounded geometry.
 Let $\phi\in L^q\cap C^0$  be a weak solution of~\eqref{eq_we}. Then $\phi\in L^\infty$.
 \end{lemma}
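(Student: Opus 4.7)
The plan is to argue by contradiction: assume $\|\phi\|_{L^\infty}=\infty$ and construct a blow-up limit that violates $\phi\in L^q$. The key input is Lemma~\ref{seq-bddratio}, calibrated to the natural scaling of the critical Dirac equation.

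\textbf{Step 1 (choosing the scale).} The equation $D\phi=\lambda|\phi|^{q-2}\phi$ with $q=\frac{2m}{m-1}$ is conformally invariant: under a constant conformal rescaling $\tilde g=c^2 g$, the spinor $\tilde\phi:=c^{-(m-1)/2}\phi$ (via the canonical identification of spinor bundles for a constant conformal change) satisfies $D^{\tilde g}\tilde\phi=\lambda|\tilde\phi|^{q-2}\tilde\phi$, and moreover $\|\tilde\phi\|_{L^q(\tilde g)}=\|\phi\|_{L^q(g)}$ (this is conformal invariance of the Dirac functional at its critical exponent $q$, a direct computation using $q(m-1)/2=m$). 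This dictates the choice $\beta=\frac{m-1}{2}$ in Lemma~\ref{seq-bddratio}, so that the balls $B_i^R$ become balls of fixed radius in the rescaled metric. Fix $R=\delta=1$ to obtain $(x_i)$ in $M$ with $a_i:=|\phi(x_i)|\ge i$ and $\|\phi\|_{L^\infty(B_i)}\le 2a_i$, where $B_i:=B_{a_i^{-2/(m-1)}}^{M,g}(x_i)$. Since $\phi$ is continuous and hence bounded on compacta, $a_i\to\infty$ forces each compact $K\subset M$ to contain only finitely many $x_i$; combined with $\phi\in L^q(M,g)$ this yields $\int_{B_i}|\phi|^q\,\vo_g\to 0$.

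\textbf{Step 2 (blow-up).} Put $c_i:=a_i^{2/(m-1)}$, $\tilde g_i:=c_i^2 g$, $\tilde\phi_i:=c_i^{-(m-1)/2}\phi$. Then $|\tilde\phi_i(x_i)|=1$, $\|\tilde\phi_i\|_{L^\infty(B_1^{\tilde g_i}(x_i))}\le 2$, and by construction $D^{\tilde g_i}\tilde\phi_i=\lambda|\tilde\phi_i|^{q-2}\tilde\phi_i$. The $L^q$-invariance from Step~1 gives $\|\tilde\phi_i\|_{L^q(B_1^{\tilde g_i}(x_i),\tilde g_i)}^q=\int_{B_i}|\phi|^q\,\vo_g\to 0$. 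Bounded geometry of $g$ together with $c_i\to\infty$ implies that the curvature tensor of $\tilde g_i$ and all its covariant derivatives tend to zero uniformly, while the injectivity radius at $x_i$ blows up. Hence a subsequence of the pointed spin manifolds $(M,\tilde g_i,x_i)$ converges in the pointed $C^\infty$ Cheeger--Gromov topology to $(\mR^m,g_{\rm eucl},0)$, with a compatible identification of spinor bundles over relatively compact sets.

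\textbf{Step 3 (regularity and contradiction).} Since $\|\tilde\phi_i\|_{L^\infty(B_1^{\tilde g_i}(x_i))}\le 2$, the nonlinearity $\lambda|\tilde\phi_i|^{q-2}\tilde\phi_i$ is bounded in $L^\infty$ uniformly in $i$. A standard bootstrap using Theorem~\ref{est}---first the inner $L^s$ estimate for $s$ large, then the Sobolev embedding into $C^{0,\gamma}$, then the Schauder estimate---yields uniform $C^{1,\alpha}$ bounds on $B_{1/2}^{\tilde g_i}(x_i)$, with constants independent of $i$ thanks to the uniform bounded geometry. Arzel\`a--Ascoli combined with the spin bundle identification then produces a $C^1_{\rm loc}$ limit spinor $\phi_\infty$ on $\mR^m$ with $|\phi_\infty(0)|=1$. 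On the other hand, lower semicontinuity of the $L^q$ norm under $C^0_{\rm loc}$ convergence gives $\int_{B_{1/2}(0)}|\phi_\infty|^q\,\vo_{\rm eucl}\le\liminf_i\|\tilde\phi_i\|_{L^q(B_{1/2}^{\tilde g_i}(x_i))}^q=0$, so $\phi_\infty\equiv 0$ on $B_{1/2}(0)$, contradicting $|\phi_\infty(0)|=1$.

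The main obstacle is organizing the pointed Cheeger--Gromov limit together with the identification of spin bundles uniformly in $i$, so that the elliptic estimates of Theorem~\ref{est} transfer and the limit spinor $\phi_\infty$ genuinely makes sense as a section of the spin bundle of $\mR^m$. Once that infrastructure is fixed, the choice $\beta=(m-1)/2$ and the vanishing of $\|\phi\|_{L^q(B_i)}$ carry the argument to its contradiction essentially for free.
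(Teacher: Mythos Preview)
Your proposal is correct and follows the same blow-up strategy as the paper: contradict $\|\phi\|_{L^\infty}=\infty$ by rescaling near a sequence of near-maximal points provided by Lemma~\ref{seq-bddratio}, bootstrap to $C^1$ compactness, and extract a nontrivial limit spinor whose $L^q$-mass must vanish. The differences are packaging. You take $\beta=\tfrac{m-1}{2}$ and phrase the rescaling as a constant conformal change $\tilde g_i=c_i^2 g$ plus pointed Cheeger--Gromov convergence, so the $L^\infty$-control from Lemma~\ref{seq-bddratio} lives on a ball of \emph{fixed} rescaled radius~$1$; this suffices since the contradiction only needs the limit on a neighbourhood of $0$. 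The paper instead works in explicit rescaled exponential charts, uses the local expansion of the Dirac operator from \cite{AGH03}, and takes $\beta=2(q-2)$ so that the rescaled radius $r_i\to\infty$, producing a limit spinor on all of~$\mR^m$ solving $D^{\mR^m}\psi=\lambda|\psi|^{q-2}\psi$; this extra information is not used for the contradiction. Your conformal-invariance framing is cleaner but relies on the black-box identification of spinor bundles under pointed convergence that you flag at the end, whereas the paper's chart computation makes that identification (and the error terms) explicit. For the vanishing of $\int_{B_i}|\phi|^q$, you argue via $x_i$ leaving every compactum; the paper passes to a subsequence of pairwise disjoint $B_i^R$ and uses summability---both are fine.
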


\begin{proof}   We assume the contrary, i.e.,  $\Vert \phi\Vert_\infty=\infty$.  We fix $\beta:=2(q-2)$, $R$ smaller than the injectivity radius, and some $\delta>0$. 
Then applying Lemma~\ref{seq-bddratio}  there is a sequence of points $(x_i)_{i\in \mN}$ in $M$ with $|\phi(x_i)|\geq i$ and $\Vert \phi\Vert_{L^\infty(B_i^R)}\leq (1+\delta)|\phi (x_i) |$. After passing to a subsequence,  every compactum only contains a finite number of $x_i$. We thus assume that all $B_i^R$ are pairwise disjoint since this can always be achieved by passing to a further subsequence. We consider the charts for $B_i^R$ given by rescaled exponential maps
  \[ u_i\colon B_{r_i}(0)\subset \mR^m\to B_i^R,\ v\mapsto \exp_{x_i} (\delta_i v)\]
  where $m_i:=|\phi(x_i)|$, $\delta_{i}:=m_i^{-\frac{1}{q-2}}$ and $r_i:=\delta_i^{-1} R\,|\phi (x_i)|^{-\frac{1}{\beta}}= R\,|\phi (x_i)|^{\frac{1}{2(q-2)}}$.

  Note that $m_i=|\phi(x_i)|\geq i\to \infty$ and, hence, $\delta_i\to 0$ and $r_i\to \infty$ as  $i\to \infty$.
  The map $u_i$ induces a map on the frame bundles which lifts to the spinor bundles, for details see \cite{BG_92}.
For simplicity, we denote this lift also by $u_i$, and set  $\psi^i:= m_i^{-1} u_i^*\phi$. Then $\psi^i$ is a spinor on $B_{r_i}(0)$, $|\psi^i(0)|=1$ and $\Vert \psi^i\Vert_{L^\infty(B_{r_i}(0))}\leq 1+\delta$.

Using the comparison of the Dirac operator with the one on the Euclidean space  \cite[Sect.~3 and 4]{AGH03}, we obtain from $D\phi=\lambda|\phi|^{q-2}\phi$ that

\[ D^{\mR^m}\psi^i +\frac{1}{4} \sum_{\alpha\beta\gamma}\tilde{\Gamma}_{\alpha\beta}^\gamma e_\alpha\cdot e_\beta\cdot e_\gamma\cdot \psi^i +\sum_{\alpha\beta} (b_\alpha^\beta-\delta_\alpha^\beta)e_\alpha\cdot \nabla_{e_\beta} \psi^i
 =\lambda|\psi^i|^{q-2}\psi^i
\]
where $\delta_\alpha^\beta$ denotes the Kronecker symbol, $e_\alpha$ is the standard orthonormal frame on $\mR^m$ and
\begin{align*}
b_{\alpha}^\beta=&\delta_\alpha^\beta-\frac{1}{6}\delta_i^2R^i_{\alpha \lambda\mu\beta} x^\lambda x^\mu+O(\delta_i^3|x|^3)\to \delta_\alpha^\beta\\
\tilde{\Gamma}_{\alpha\beta}^\gamma=&\partial_\alpha b_\beta^\gamma -\frac{1}{3}\delta_i(R^i_{\alpha\gamma\lambda\beta}+R^i_{\alpha\lambda\gamma\beta})x^\lambda +O(\delta_i^2|x|^2)\to 0 
\end{align*}
as $\delta_i\to 0$, $i\to\infty$. Here, 
$R^i_{\alpha \lambda\mu\beta}= g_{x_i}([\na_{\partial_\beta},\na_{\partial_\mu}]\pa_\alpha-\na_{[\pa_\beta,\pa_\mu]}\pa_\alpha,\pa_\lambda)$ is the Riemannian curvature tensor of $g$ at $x_i$.

Let $K_j$, $j=0,1,2,3$ be compact subsets of $\mathbb{R}^m$ with $K_{j+1}\subset {\rm interior}(K_j)$ for $j=0,1,2$, 
and let $i_0$ be big enough such that $K_0\subset B_{r_{i_0}}(0)$. 
Since $\psi^i$ is bounded on $B_{r_i}(0)$ for $i\geq i_0$, the inner $L^s$-estimate in 
Theorem~\ref{est_i} shows that for each $s$ the $\psi^i$'s are uniformly 
bounded in $H_1^s(K_0)$. 
Thus, after passing to a subsequence $\psi^i\to \psi$ weakly in $H_1^s(K_0)$. 
The restriction map $H_1^s(K_0)\to C^{0,\gamma}(K_1)$  is bounded because of   Theorem~\ref{est_ii}, hence
the $\psi^i$ are uniformly bounded also in $C^{0,\gamma}(K_1)$ for all 
$\gamma\in (0,1)$. In particular, $|\psi^i|^{q-2}\psi^i$ are uniformly bounded 
in $C^{0,\gamma}(K_1)$. Thus, by the Schauder estimate 
(see Theorem~\ref{est_iii}) we obtain $\psi\in C^{1,\gamma}(K_2)$ and, thus, by 
Arzela-Ascoli $ \psi^i\to \psi$ strongly in $C^1$ on $K_3$, 
after passing to a subsequence. We apply this construction to $K_3:=B_k(0)$ and construct a diagonal subsequence  for $k\to \infty$. This subsequence converges locally in $C^1$ to a spinor $\psi$ on $\mR^m$ with 
$|\psi(0)|=1$, $\Vert\psi\Vert_{L^\infty}\leq 1+\delta$ and $D^{\mR^m}\psi=\lambda |\psi|^{q-2}\psi$.

We write $b_i\vo_{\mR^m}= u_i^*\vo_g$, where $b_i:=\sqrt{\det (u_i^*g)}\to 1$ as 
$i\to \infty$ in $C^1$ on each compact subset of $\R^m$. As the balls $B_i^R$ are disjoint, $\phi\in L^q$ implies that  $\int_{B_i^R} |\phi|^q \,\vo_g  \to 0$ as $ i\to\infty$.
Thus, we get for all compacta $\tilde{K}$ and sufficiently large $i$

\begin{align*}
 \int_{\tilde{K}}|\psi^i|^q\, \vo_{\mR^n} \leq 1.001 \int_{|x|< r_i}|\psi^i|^q b_i\,\vo_{\mR^n} = 1.001 \int_{B_i^R} |\phi|^q \,\vo_g  \to 0\quad\text{as\ } i\to\infty.
\end{align*}
Hence, $\Vert \psi\Vert_q=0$ which contradicts $\psi\in C^1$ and $|\psi(0)|=1$. Thus, $\phi\in L^\infty$. 
\end{proof}

\begin{lemma}\label{0-bdd-sol} Let $(M^m,g)$ be of bounded geometry.
 Let $\phi\in L^q\cap C^0$ fulfill weakly \eqref{eq_we}. 
 Then, $\lim_{x\to \infty} |\phi|=0$. 
 Moreover, $\phi\in C^{1,\gamma}$ for all $\gamma\in (0,1)$, $\lim_{x\to \infty} \Vert \phi\Vert_{C^{1,\gamma}(B_r(x))}=0$ for all $r>0$, and $\Vert \phi\Vert_{C^{1,\gamma}}<\infty$. In particular, 
$\phi$ is uniformly continuous.
\end{lemma}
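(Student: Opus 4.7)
The plan is to first upgrade $\phi$ to $C^{1,\gamma}$ locally and uniformly in the base point, and then to run a blow-down argument in the spirit of Lemma~\ref{bdd-sol}, but this time without any rescaling since Lemma~\ref{bdd-sol} has already provided $\phi\in L^\infty(M)$.

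Starting from $\phi\in L^\infty\cap L^q$, the right-hand side of \eqref{eq_we} lies in $L^\infty$. The inner $L^s$-estimate Theorem~\ref{est_i}, applied for arbitrarily large $s$, gives $\phi\in H^s_{1,\mathrm{loc}}$; the Sobolev embedding Theorem~\ref{est_ii} gives $\phi\in C^{0,\gamma}_{\mathrm{loc}}$ for any $\gamma\in(0,1)$; and since $q>2$ the nonlinearity $z\mapsto|z|^{q-2}z$ is locally Lipschitz, so $|\phi|^{q-2}\phi\in C^{0,\gamma}_{\mathrm{loc}}$ with the same exponent. The Schauder estimate Theorem~\ref{est_iii} then upgrades $\phi$ to $C^{1,\gamma}_{\mathrm{loc}}$. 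Because $(M,g)$ has bounded geometry, the constants in all three estimates can be taken uniformly in the base point, producing a bound
\[\|\phi\|_{C^{1,\gamma}(B_r(x))}\le C\bigl(\|\phi\|_{L^\infty(M)}\bigr)\]
independent of $x\in M$, which immediately yields $\|\phi\|_{C^{1,\gamma}}<\infty$ together with the uniform continuity of $\phi$.

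To prove $\lim_{x\to\infty}|\phi(x)|=0$, I would argue by contradiction: pick $\ep>0$ and $x_i\to\infty$ with $|\phi(x_i)|\ge\ep$; after extracting a subsequence one may assume the balls $B_R(x_i)$ (with $R$ below the injectivity radius) are pairwise disjoint. Pull $\phi$ back by the exponential charts $u_i\colon B_R(0)\subset\mR^m\to B_R(x_i)$ to spinors $\psi^i$ on $B_R(0)$. By bounded geometry the pulled-back equations converge on compacts to $D^{\mR^m}\psi=\lambda|\psi|^{q-2}\psi$, exactly as in the proof of Lemma~\ref{bdd-sol} but with $\delta_i=1$, and by the uniform $C^{1,\gamma}$ bound just obtained the $\psi^i$ are precompact in $C^1_{\mathrm{loc}}(\mR^m)$; so along a subsequence $\psi^i\to\psi$ in $C^1_{\mathrm{loc}}$. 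On the other hand, disjointness of the $B_R(x_i)$ together with $\phi\in L^q(M)$ forces $\int_{B_R(x_i)}|\phi|^q\,\vo_g\to0$, hence $\|\psi^i\|_{L^q(B_R(0))}\to0$, and therefore $\psi\equiv0$. This contradicts $|\psi(0)|=\lim_i|\phi(x_i)|\ge\ep>0$.

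To upgrade this to $\lim_{x\to\infty}\|\phi\|_{C^{1,\gamma}(B_r(x))}=0$ I would run exactly the same blow-down argument, but with a H\"older exponent $\gamma'\in(\gamma,1)$ in the first step: the bootstrap then yields a uniform $C^{1,\gamma'}$ bound on $\phi$, and the subsequential $C^1_{\mathrm{loc}}$-limit $\psi\equiv0$ combined with the uniform $C^{1,\gamma'}$ bound on $\psi^i$ forces $\psi^i\to0$ in $C^{1,\gamma}_{\mathrm{loc}}$ by interpolation between $C^1$ and $C^{1,\gamma'}$. The main obstacle I foresee is ensuring that the constants in the bootstrap Step~1 really are uniform in $x\in M$ (which should follow from bounded geometry and the uniformity built into Theorem~\ref{est}), and verifying that the H\"older interpolation step transfers cleanly from the Euclidean spinors $\psi^i$ back to $\phi$ on $M$; both are essentially standard once the dependence of the constants is carefully tracked.
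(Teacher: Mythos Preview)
Your proof is correct and follows essentially the same strategy as the paper: use Lemma~\ref{bdd-sol} to get $\phi\in L^\infty$, bootstrap via the inner $L^s$-estimate, Sobolev embedding, and Schauder estimate to obtain uniform $C^{1,\gamma}$ bounds (with constants independent of the base point thanks to bounded geometry), and then derive the decay at infinity by contradiction using the $L^q$-integrability on disjoint balls.

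The only notable difference is in how the contradiction is extracted. The paper argues directly from the uniform H\"older bound: since $\inf_{B_\rho(x_i)}|\phi|\to 0$ (forced by $\phi\in L^q$ and disjointness) while $|\phi(x_i)|\geq V$, the H\"older estimate $|\phi(x_i)-\phi(y)|\leq C''\rho^\gamma$ gives $V\leq C''\rho^\gamma$, absurd for small $\rho$. You instead pass to a $C^1$-limit $\psi$ via Arzel\`a--Ascoli and observe $\|\psi^i\|_{L^q}\to 0$ forces $\psi\equiv 0$, contradicting $|\psi(0)|\geq\ep$. Both work; the paper's version is slightly more elementary (no limiting object needed), while yours packages the argument in a way that transparently handles the $C^{1,\gamma}$ decay via interpolation. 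One small inaccuracy: without rescaling ($\delta_i=1$) the pulled-back metrics need not converge to the Euclidean metric, so the limit equation is not literally $D^{\mR^m}\psi=\lambda|\psi|^{q-2}\psi$ --- but you never use the limit equation, only that $\psi\equiv 0$, so this does not affect the argument.
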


\begin{proof} From Lemma~\ref{bdd-sol} we have $\phi\in L^\infty$. Fix $z\in M$ and $\delta>0$ to be smaller than the injectivity radius. Let $d(., .)$ denote the distance function on $(M,g)$. We prove the first claim by contradiction: We assume that there is constant $V > 0$ and a sequence $(x_i)_{i\in \mN}\subset M$ with $|\phi(x_i)|\geq V$, $|x_i|=d(x_i, z)\to \infty$ and $d(x_i,x_j)> 2\delta$.

Let $\epsilon \in (0,\frac{\delta}{2})$. 
Since $\phi\in L^\infty$ and $(M,g)$ has bounded geometry, we obtain by inner $L^s$-estimates that
\begin{align} \Vert \phi\Vert_{H_1^s(B_{\epsilon}(x_i))}&\leq C_\delta(s)(\Vert \phi\Vert_{L^s(B_{2\epsilon}(x_i))} + \Vert \lambda |\phi|^{q-2}\phi\Vert_{L^s(B_{2\epsilon}(x_i))})\nonumber \\&\leq CC_\delta(s)\vol(B_{2\epsilon}(x_i))^\frac{1}{s}\leq C'\label{H_1_s}\end{align}
where $C'$ does not depend on $i$.  

Fixing $s>m$ and using the Sobolev embedding $H_1^s(B_\epsilon(x_i)) \hookrightarrow C^{0,\gamma}(B_\epsilon(x_i))$ we get that $\Vert \phi\Vert_{C^{0,\gamma}(B_{\rho}(x_i))} \leq C''$ for some $\gamma\in (0,1)$, $\rho\in (0,\epsilon)$, and where $C''$ is independent on $i$.

With $\phi \in L^q$ we estimate
\[ \Vert \phi\Vert_{q}^q \geq \sum_i \Vert \phi\Vert_{L^q(B_{\rho}(x_i))}^q \geq K \sum_i \inf_{x\in B_{\rho}(x_i)} |\phi(x)|\]
where $K:=\inf_i \vol(B_{\rho} (x_i))$. Note that $K>0$ since $(M,g)$ has bounded geometry. Hence, $\inf_{x\in B_{\rho}(x_i)} |\phi(x)|\to 0$ as $i\to \infty$.
But on the other hand on each ball $B_{\rho}(x_i)$ we have  for all $x,y\in B_{\rho}(x_i)$ that $|\phi(x)-\phi(y)|\leq C''|x-y|^\gamma\leq C'' {\rho}^\gamma$. 
Thus 
  $$V\leq \limsup_{i\to\infty} |\phi(x_i)| \leq C'' {\rho}^\gamma.$$
By choosing $\rho$ small enough we obtain a contradiction.

Inequality \eqref{H_1_s} still holds if we replace $x_i$ by an arbitrary $x\in M$. Then, $C'$ does not depend on $x$. Moreover, choose $s$ large enough we then have for any $\gamma\in (0,1)$ that $\Vert \phi\Vert_{C^{0,\gamma}(B_{\rho/2}(x))}<\infty$ for all $x\in M$ and $\lim_{x\to \infty} \Vert \phi\Vert_{C^{0,\gamma}(B_{\rho/2}(x))}=0$. Thus, $\phi\in C^{0,\gamma}$ for any $\gamma\in (0,1)$. 
Then, by a further bootstrap step we obtain the same statement for $C^{1,\gamma}$ instead of $C^{0,\gamma}$ and for $\rho/3$ instead of $\rho/2$. Thus, for sufficiently large compactum $\hat{K}$ the norm $\Vert \phi\Vert_{C^{1,\gamma}(M\setminus\hat{K})}$ is arbitrarily close to zero. This implies the lemma.
\end{proof}

\begin{corollary}\label{improved-reg}  Let $(M^m,g)$ be of bounded geometry.
 Let $\phi\in L^q\cap C^0$  be a weak solution of $D\phi=\lambda |\phi|^{\frac{2}{m-1}}\phi$ with $\Vert \phi\Vert_p=1$. Then, $\phi\in C^{2,\gamma}$ for all $\gamma\in (0, \frac{2}{m-1}]$ if $m\geq 4$ and all $\gamma\in (0, 1)$ otherwise.
 \end{corollary}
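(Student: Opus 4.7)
The plan is to iterate the Schauder estimate of Theorem~\ref{est_iii} once more, with a careful analysis of the nonlinearity $F(s):=|s|^{2/(m-1)}s$ viewed fiberwise on $S_M$. By Lemma~\ref{0-bdd-sol}, $\phi$ is bounded and lies in $C^{1,\gamma}$ for every $\gamma\in(0,1)$ with $\Vert\phi\Vert_{C^{1,\gamma}}<\infty$; in particular, $\phi$ is globally Lipschitz and has bounded image. The goal is to show that $\psi:=\lambda F(\phi)\in C^{1,\alpha}$ for the claimed range of $\alpha$, so that Theorem~\ref{est_iii}(iii) with $k=1$ applied to $D\phi=\psi$ yields $\phi\in C^{2,\alpha}$ locally, with uniform bounds coming from bounded geometry.

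First I would establish the pointwise regularity of $F$. Away from zero $F$ is smooth, and its differential
\[ dF_s\cdot h=\tfrac{2}{m-1}|s|^{\frac{2}{m-1}-2}\<s,h\>s+|s|^{2/(m-1)}h \]
extends continuously to $dF_0=0$, since $|dF_s|=O(|s|^{2/(m-1)})$. The scalar factor $|s|^{2/(m-1)}$ is exactly $2/(m-1)$-Hölder near zero, and combining the estimates away from and across zero shows $F\in C^{1,2/(m-1)}$ on any bounded set whenever $m\geq 4$ (so $2/(m-1)\leq 2/3<1$). For $m=2$, $F$ is a polynomial in $s$ and $\bar s$; for $m=3$, $F$ is $C^{1,1}$.

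Next, by the chain rule, $\nabla(F\circ\phi)=dF(\phi)\cdot\nabla\phi$. Composing the $2/(m-1)$-Hölder bound on $dF$ (on the bounded image of $\phi$) with the global Lipschitz bound on $\phi$ yields $dF(\phi)\in C^{0,\min(2/(m-1),1)}$ with constants uniform in $x\in M$, since $\Vert\phi\Vert_\infty<\infty$. Because $\nabla\phi\in C^{0,\gamma}$ for every $\gamma<1$, in the case $m\geq 4$ I may take $\gamma=2/(m-1)\in(0,1)$ and obtain $dF(\phi)\cdot\nabla\phi\in C^{0,2/(m-1)}$. Together with $F(\phi)\in L^\infty$, this gives $\psi\in C^{1,2/(m-1)}$, and Theorem~\ref{est_iii}(iii) produces $\phi\in C^{2,2/(m-1)}$, hence $\phi\in C^{2,\gamma}$ for every $\gamma\in(0,2/(m-1)]$. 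For $m\leq 3$ the map $dF$ is (locally) Lipschitz, so the product $dF(\phi)\cdot\nabla\phi$ lies in $C^{0,\gamma}$ for every $\gamma<1$, and the same Schauder step yields $\phi\in C^{2,\gamma}$ for all $\gamma\in(0,1)$.

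The main obstacle I expect is the first step, namely the verification that $dF$ is exactly $2/(m-1)$-Hölder (and not better) across the zero section of $S_M$: one must treat separately the smooth region $\{s\neq 0\}$, where $F$ has a singular second derivative of the correct order, and the singular point $s=0$, where the vanishing of $dF_0$ has to be matched with the correct Hölder exponent, and then glue the two estimates into one uniform Hölder bound. Once this pointwise regularity of the nonlinearity is in hand, the composition identity, the product-of-Hölder-functions estimate, and Theorem~\ref{est_iii}(iii) combine in a routine bootstrap to yield the corollary, with uniformity across $M$ following from bounded geometry together with the uniform $C^{1,\gamma}$-norm bound provided by Lemma~\ref{0-bdd-sol}.
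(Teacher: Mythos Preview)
Your proposal is correct and follows essentially the same route as the paper: both arguments write out $\nabla\bigl(|\phi|^{2/(m-1)}\phi\bigr)$ via the chain rule, establish that each factor is $C^{0,\gamma}$ for the stated range of $\gamma$, and then apply the Schauder estimate of Theorem~\ref{est}(iii) with $k=1$. The only difference is packaging: the paper treats the two summands of $\nabla\psi$ separately and invokes \cite[Lem.~B.1]{Amm03a} for the estimate $|\phi|^{\beta-2}\phi\otimes\phi\in C^{0,\beta}$, whereas you phrase the same content as ``$dF$ is $\tfrac{2}{m-1}$-H\"older across the zero section'' and propose to verify it directly---which is exactly the content of the cited lemma.
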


\begin{proof}Let $\beta:=p-2=\frac{2}{m-1}$ and $\psi=|\phi|^\beta\phi$.  
At first we will show that 
\begin{equation}\label{napsi}
\nabla \psi= |\phi|^\beta \nabla \phi + \beta \< \nabla\phi,\phi\> |\phi|^{\beta-2}\phi
\end{equation} 
is in $C^\gamma$ for $\gamma$ as above: By Lemma~\ref{0-bdd-sol} $\phi\in C^{1,\alpha}$ for all $\alpha\in (0,1)$.
Thus, $\phi$ is locally Lipschitz and, hence, $|\phi|^\beta$ is in $C^\beta$. 
Moreover, $\nabla \phi\in C^\alpha$, thus the first summand in~\eqref{napsi}
is $C^{\min\{\alpha, \beta\}}$.
By \cite[Lem.~B.1]{Amm03a} 
$|\phi|^{\beta-2}\phi\otimes \phi\in C^\beta$.  
It follows that $\< \nabla\phi,\phi\> |\phi|^{\beta-2}\phi$ is $C^\ga$ as well.
Thus, $\nabla \psi\in C^\gamma$ and $\psi\in C^\alpha$ for all $\alpha\in (0,1)$.
Now Schauder estimates, see Theorem~\ref{est_iii}, imply $\phi\in C^{2,\gamma}$.
The corollary then follows.
\end{proof}

\begin{example}
Let us consider Euclidean $\mR^m$, $m\geq 2$ with standard basis $(e_i)_{i=1,\ldots, m}$ and 
with a parallel spinor $\psi_0\neq 0$.
We define 
  $$\phi(x^1,\ldots,x^m):= x^1 e_1\cdot \psi_0 - x^2 e_2\cdot \psi_0.$$
Then $\nabla \phi=dx^1\otimes e_1\cdot \psi_0 - dx^2 \otimes e_2\cdot \psi_0$,
and thus $D\phi= e_1\cdot e_1\cdot \psi_0 -  e_2\cdot e_2\cdot \psi_0 = -\psi_0+\psi_0=0$. Thus this spinor satisfies \eqref{eq_we} with $\la=0$, but is not 
$L^q$ and many conclusions in this section, in particular the $L^\infty$-bound, do not hold. 
The example thus shows that the $L^q$-condition in the above 
lemmata is necessary.
\end{example}

We know that by Lemma~\ref{bdd-sol} $\phi$ is in $L^\infty$. However, the following example shows that 
we cannot derive an upper bound for 
$\|\phi\|_{L^\infty}$ which only depends 
on $(M,g)$, $\|\phi\|_{L^q}$ and $\lambda$.  

\begin{example}\label{no_Linfty_bound}
Consider again Euclidean $\mR^m$. Take a Killing spinor $\phi$ on the sphere $(\mS^m, \sigma^m)$ normalized such that its $L^{q=\frac{2m}{m-1}}$-norm is one. Then on $\mS^m$ we have $D^{\mS^m}\phi=\frac{m}{2}\phi=\lm(\mS^m)|\phi|^{q-2}\phi$, cf. Remark~\ref{inv_sphere}. The stereographic projection ${\rm h}$ is a conformal map from the sphere with a point removed to the Euclidean space. Let now ${\rm h}^\rho$ be the composition of $\phi$ and the scaling of $\mR^m$ by $\rho$. Then, $g_E = f_\rho^2\, {\rm h}^\rho_*( \sigma^m)$ where $f_\rho=\rho^\frac{1}{2} f_1$ is the conformal factor. Using the identification of spinor bundles of conformal metrics, cf. \cite[Sect.~4]{hijazi}, we get a spinor $\tilde{\phi}=f_\rho^{-\frac{m-1}{2}} \phi$ fulfilling $D^{\mR^m}\tilde{\phi}= \lm(\mS^m)|\tilde{\phi}|^{q-1}\tilde{\phi}$ and $\Vert \tilde{\phi}\Vert_{L^q(\mR^m)}=1$. But $\Vert \tilde{\phi}\Vert_{L^\infty(\mR^m)}=\rho^{-\frac{m-1}{4}} \Vert f_1{-\frac{m-1}{2}} \phi\Vert_{L^\infty(\mR^m)}\
\to \infty$ as $\rho\to 0$.
We obtain an example where $L^\infty$-norm of solutions cannot be controlled in terms of its $L^q$-norm, $\lambda$ and $(M,g)$.
\end{example}

We close this section by some lemmata on removal of singularities for our Euler-Lagrange equations.

\begin{lemma}\label{removal-D}
 Let $(M,g)$ be an $m$-dimensional Riemannian spin manifold, and let $S\subset M$ be an embedded submanifold of dimension $\ell\leq m-s^*$ where $s^*$ is the conjugate exponent of $s$. Assume that $\phi$ is a spinor field such that $\Vert \phi\Vert_s<\infty$ for $s\in (1,\infty)$ and $D\phi=\lambda |\phi|^{s-2}\phi$ weakly on $M\setminus S$ for $\lambda\in \mR$. Then $D\phi=\lambda |\phi|^{s-2}\phi$ weakly on $M$.
\end{lemma}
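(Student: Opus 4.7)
The plan is a standard cut-off/capacity argument, with the exponent condition $\ell\leq m-s^*$ appearing exactly as the borderline scaling in a tubular-neighbourhood Hölder estimate. Let $\psi\in C_c^\infty(M,S_M)$ be an arbitrary test spinor. Since $\psi$ has compact support, we may intersect $S$ with a fixed compact neighbourhood of $\supp\psi$ and assume that $S$ itself is compact; in particular $S$ admits a tubular neighbourhood $U$ on which $r(x):=\dist(x,S)$ is smooth and the volume estimate
\[
\vol(T_\varepsilon):=\vol\bigl(\{x\in U : \varepsilon \leq r(x)\leq 2\varepsilon\}\bigr) \leq C\,\varepsilon^{m-\ell}
\]
holds for $\varepsilon$ small. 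Choose a fixed $\chi\in C^\infty(\mR,[0,1])$ with $\chi\equiv 0$ on $(-\infty,1]$ and $\chi\equiv 1$ on $[2,\infty)$, and define
\[
\eta_\varepsilon(x):=\chi(r(x)/\varepsilon), \qquad |\nabla\eta_\varepsilon|\leq C\varepsilon^{-1}\mathbf{1}_{T_\varepsilon}.
\]
Then $\eta_\varepsilon\psi\in C_c^\infty(M\setminus S,S_M)$, so the weak equation on $M\setminus S$ gives
\[
\int_M\<\phi,D(\eta_\varepsilon\psi)\>\vo=\lambda\int_M|\phi|^{s-2}\<\phi,\eta_\varepsilon\psi\>\vo.
\]
Using $D(\eta_\varepsilon\psi)=\eta_\varepsilon D\psi+\nabla\eta_\varepsilon\cdot\psi$, it suffices to show, as $\varepsilon\to 0$,
\begin{enumerate}
\item[(a)] $\int_M\<\phi,\eta_\varepsilon D\psi\>\vo\to\int_M\<\phi,D\psi\>\vo$,
\item[(b)] $\int_M|\phi|^{s-2}\<\phi,\eta_\varepsilon\psi\>\vo\to\int_M|\phi|^{s-2}\<\phi,\psi\>\vo$,
\item[(c)] $\int_M\<\phi,\nabla\eta_\varepsilon\cdot\psi\>\vo\to 0$.
\end{enumerate}

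Steps (a) and (b) are immediate from dominated convergence: $\eta_\varepsilon\to 1$ pointwise off $S$, $\phi\in L^s$ and $|\phi|^{s-1}\in L^{s^*}$ (since $(s-1)s^*=s$), while $D\psi$ and $\psi$ are bounded with compact support. Step (c) is the key. By Hölder's inequality,
\[
\left|\int_M\<\phi,\nabla\eta_\varepsilon\cdot\psi\>\vo\right|
\leq \|\phi\|_{L^s(T_\varepsilon)}\,\|\nabla\eta_\varepsilon\cdot\psi\|_{L^{s^*}(T_\varepsilon)}
\leq \|\psi\|_{L^\infty}\,\|\phi\|_{L^s(T_\varepsilon)}\,\|\nabla\eta_\varepsilon\|_{L^{s^*}(T_\varepsilon)}.
\]
Using the pointwise bound on $\nabla\eta_\varepsilon$ and the volume estimate for the annular tube,
\[
\|\nabla\eta_\varepsilon\|_{L^{s^*}(T_\varepsilon)}^{s^*}\leq C\varepsilon^{-s^*}\vol(T_\varepsilon)\leq C'\varepsilon^{m-\ell-s^*},
\]
so the hypothesis $\ell\leq m-s^*$ makes this quantity uniformly bounded in $\varepsilon$. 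On the other hand, $\phi\in L^s(M)$ and $T_\varepsilon\subset U$ shrinks to the null set $S$, so absolute continuity of the integral gives $\|\phi\|_{L^s(T_\varepsilon)}\to 0$. This proves (c).

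The main (really only) obstacle is the borderline case $\ell=m-s^*$: there $\|\nabla\eta_\varepsilon\|_{L^{s^*}}$ is merely bounded and does not tend to zero, which is why one must split the Hölder estimate so that the smallness is furnished by $\|\phi\|_{L^s(T_\varepsilon)}\to 0$ rather than by the cutoff itself. Combining (a)--(c) with the weak equation for $\eta_\varepsilon\psi$ and passing to the limit yields $\int\<\phi,D\psi\>=\lambda\int|\phi|^{s-2}\<\phi,\psi\>$ for every $\psi\in C_c^\infty(M,S_M)$, which is the desired weak equation on all of $M$.
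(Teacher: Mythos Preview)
Your proof is correct and follows essentially the same cut-off/capacity argument as the paper: both test against $\eta_\varepsilon\psi$ (the paper uses the complementary cutoff $1-\eta_\delta$, but this is only a notational difference), split the resulting integral into the same three pieces, and control the gradient term via the H\"older estimate $\|\phi\|_{L^s(T_\varepsilon)}\cdot C\varepsilon^{-1}\vol(T_\varepsilon)^{1/s^*}$, with the factor $\varepsilon^{(m-\ell)/s^*-1}$ bounded and the smallness supplied by $\|\phi\|_{L^s(T_\varepsilon)}\to 0$. Your explicit remark on the borderline case $\ell=m-s^*$ is a nice addition that the paper leaves implicit.
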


\begin{proof}
 We follow the proof for the removal of singularities for weakly harmonic spinors in \cite[Lem.~2.4]{ammann.dahl.humbert:09}: Let $U_S(\epsilon)$ consist of all points of $M$ with distance $\leq \epsilon$ to $S$. Let $\eta_\delta$ be a cut-off function with $\eta_\delta=1$ on $U_S(\delta)$, $\eta_\delta=0$ on $U_S(2\delta)$ and $|\nabla \eta_\delta|\leq 2\delta^{-1}$. Then, we obtain for a smooth and compactly supported spinor $\psi$ on $M$
 \begin{align*}
   \int_M \left< \phi, D\psi \right>-\lambda  \int_M \left< |\phi|^{s-2}\phi, \psi \right> &= \int_M \left< \phi, D (1-\eta_\delta) \psi \right>-\lambda  \int_M \left< |\phi|^{s-2}\phi, (1- \eta_\delta)\psi \right>\\
   & + \int_M \left< \phi, \eta_\delta D\psi \right>+\int_M \left< \phi, \nabla \eta_\delta \cdot \psi \right>-\lambda  \int_M \left< |\phi|^{s-2}\phi, \eta_\delta \psi \right>.
 \end{align*}
 The sum of the first two summands on the right side vanishes since $D\phi=\lambda |\phi|^{s-2}\phi$ weakly on $M\setminus S$. 
Moreover, $\left| \int_M \left< \phi, \eta_\delta D\psi \right>\right|\leq \Vert \phi\Vert_s \Vert D\psi\Vert_{L^{s^*}(U_S(2\delta))} \to 0$ and  $\left| \int_M \left< |\phi|^{s-2}\phi, \eta_\delta \psi \right>\right| \leq \Vert \phi\Vert_{s}^{s/s^*} \Vert \psi\Vert_{L^{s}(U_S(2\delta))}\to 0$ as $\delta\to 0$.  The remaining term can be estimates by

\begin{align*}
 \left| \int_M \left< \phi, \nabla \eta_\delta \cdot \psi \right>\right|&\leq \frac{2}{\delta} \Vert \phi\Vert_{L^{s}(U_S(2\delta))} \Vert \psi\Vert_{L^{s^*}(U_S(2\delta))}\leq \frac{C}{\delta}\Vert\phi\Vert_{L^{s}(U_S(2\delta))}  {\rm vol}( U_S(2\delta)\cap \supp \psi)^{\frac{1}{s^*}}\\
 &\leq C'\underbrace{\Vert\phi\Vert_{L^{s}(U_S(2\delta))}}_{\to 0} \delta^{\frac{m-\ell}{s^*}-1} \to 0. 
\end{align*}

\end{proof}

\begin{lemma}\label{removal-L} 
 Let $(M,g)$ be an $m$-dimensional Riemannian spin manifold, and let $S\subset M$ be an embedded submanifold of dimension $\ell\leq m-2s^*$ where $s^*$ is the conjugate exponent of $s$. Assume that $v$ is a nonnegative function such that $\Vert v\Vert_s<\infty$ for $s\in (1,\infty)$ and $Lv=\mu v^{s-1}$ weakly on $M\setminus S$ for $\mu\in \mR$. Then $Lv=\mu v^{s-1}$ weakly on $M$.
\end{lemma}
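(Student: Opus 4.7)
The plan is to imitate the proof of Lemma~\ref{removal-D}, but account for the fact that $L^g = a\Delta^g + \scal^g$ is second order, so I must control derivatives of the cutoff up to order two. Given a smooth compactly supported test function $\psi$ on $M$, the goal is to establish $\int_M v\,L^g\psi\,\vo_g = \mu\int_M v^{s-1}\psi\,\vo_g$. I will take a cutoff $\eta_\delta$ equal to $1$ on $U_S(\delta)$, vanishing outside $U_S(2\delta)$, and satisfying $|\nabla\eta_\delta|\leq C\delta^{-1}$ and $|\Delta^g\eta_\delta|\leq C\delta^{-2}$; such an $\eta_\delta$ is produced by composing a fixed scalar cutoff with $\delta^{-1}\dist(\cdot,S)$, which is smooth in a tubular neighborhood of the embedded submanifold $S$ for $\delta$ small enough.

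Splitting $\psi = (1-\eta_\delta)\psi + \eta_\delta\psi$ and applying the weak equation on $M\setminus S$ to $(1-\eta_\delta)\psi$, matters reduce to showing that the $\eta_\delta$-contribution on both sides tends to zero as $\delta\to 0$. Expanding
\[
L^g(\eta_\delta\psi) = a\psi\,\Delta^g\eta_\delta + 2a\<\nabla\eta_\delta,\nabla\psi\> + \eta_\delta L^g\psi,
\]
the terms $\int v\,\eta_\delta L^g\psi\,\vo_g$ and $\mu\int v^{s-1}\eta_\delta\psi\,\vo_g$ are handled by Hölder's inequality: they are bounded by $\|v\|_s$ (respectively $\|v\|_s^{s/s^*}$) times an $L^{s^*}$-norm of a bounded function over $U_S(2\delta)\cap\supp\psi$, whose volume shrinks like $\delta^{m-\ell}$, so these pieces vanish in the limit.

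The two critical terms are those where derivatives hit $\eta_\delta$. Using $\vol(U_S(2\delta)\cap\supp\psi)\leq C'\delta^{m-\ell}$,
\[
\Bigl|\int_M v\,\<\nabla\eta_\delta,\nabla\psi\>\,\vo_g\Bigr| \leq C\delta^{-1}\,\|v\|_{L^s(U_S(2\delta))}\,\delta^{(m-\ell)/s^*},
\]
\[
\Bigl|\int_M v\,\psi\,\Delta^g\eta_\delta\,\vo_g\Bigr| \leq C\delta^{-2}\,\|v\|_{L^s(U_S(2\delta))}\,\delta^{(m-\ell)/s^*}.
\]
The dimension hypothesis $\ell\leq m-2s^*$ is precisely what makes $(m-\ell)/s^*-2\geq 0$, so the powers of $\delta$ stay bounded; combined with $\|v\|_{L^s(U_S(2\delta))}\to 0$ by absolute continuity of the integral, both expressions tend to $0$.

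The main obstacle, and the reason the dimension threshold is doubled compared with Lemma~\ref{removal-D}, is the Laplacian-of-cutoff term: unlike the first-order Dirac setting where only $\nabla\eta_\delta$ appears and $\ell\leq m-s^*$ is sufficient, here the $\delta^{-2}$ factor forces $\ell\leq m-2s^*$. In the strict inequality case the powers of $\delta$ alone suffice; in the endpoint case $\ell = m-2s^*$ the vanishing hinges on the factor $\|v\|_{L^s(U_S(2\delta))}\to 0$. The only technical subtlety is that $\dist(\cdot,S)$ is smooth only on a tubular neighborhood, but since $S$ is embedded this is guaranteed for all sufficiently small $\delta$.
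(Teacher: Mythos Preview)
Your proof is correct and follows exactly the approach the paper intends: the paper's own proof is a two-line sketch pointing back to Lemma~\ref{removal-D} and noting only that the cutoff must additionally satisfy $|\Delta\eta_\delta|\leq 4\delta^{-2}$, with the estimates done analogously---which is precisely what you have carried out in detail. One inconsequential slip: with the convention $\Delta=\d^*\d$ used in the paper, the cross term in $L^g(\eta_\delta\psi)$ carries a minus sign, $-2a\langle\nabla\eta_\delta,\nabla\psi\rangle$, but since you only bound its absolute value this does not affect the argument.
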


\begin{proof}
The proof is similar to the one of  Lemma~\ref{removal-D}, and we use the notations therein. The cut-off function $\eta_\delta$ is chosen such it fulfills additionally $|\Delta \eta_\delta|\leq 4\delta^{-2}$. Then, the estimates are done analogously.
\end{proof}

\section{Gromov-Hausdorff convergences}
Let $(M_i,g_i,x_i)$, $i\in \mathbb{N}$, and $(M_\infty,g_\infty,x_\infty)$
be pointed complete connected Riemannian manifolds.
We say that $(M_i,g_i,x_i)$ converges to $(M_\infty,g_\infty,x_\infty)$ in the
$C^k$-topology of pointed Riemannian manifolds if for every $R>0$ and every 
$i\geq i_0(R)$ there is an injective immersion 
$\phi_i^R\colon B^{M_\infty,g_\infty}_{R+1}(x_\infty)\to B^{M_i,g_i}_{R+1}(x_i)$ such that 
$(\phi_i^R)^*g_i$ converges to $g_\infty$ on
$B^{M_\infty,g_\infty}_R(x_\infty)$ in the $C^k$-topology.
If all manifolds above carry spin structures, then we say 
that they converge in the $C^k$-topology of pointed Riemannian spin 
manifolds if additionally the maps $\phi_i^R$ preserve the chosen spin 
structures.

\begin{lemma}\label{lemma.pointed.conv}
If $(M_i,g_i,x_i)$ converges to $(M_\infty,g_\infty,x_\infty)$ in the
$C^2$-topology of pointed Riemannian manifolds, then 
 $$\limsup_{i\to \infty} \Q(M_i,g_i) \leq \Q(M_\infty,g_\infty).$$
If $(M_i,g_i,x_i)$ converges to $(M_\infty,g_\infty,x_\infty)$ in the
$C^1$-topology of pointed Riemannian spin manifolds, then 
 $$\limsup_{i\to \infty} \Qs(M_i,g_i) \leq \Qs(M_\infty,g_\infty).$$
\end{lemma}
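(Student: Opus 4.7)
The approach I would take is the standard upper semicontinuity argument. Since each of the invariants $\Q$ and $\lm$ (hence $\Qs$) is defined as an infimum over compactly supported test objects, an $\epsilon$-almost minimizer on $(M_\infty,g_\infty)$ sits inside a fixed ball $B^{M_\infty,g_\infty}_R(x_\infty)$; for $i$ large this ball is immersed into $M_i$ via $\phi_i^R$ with $(\phi_i^R)^*g_i$ close to $g_\infty$ in the appropriate $C^k$-topology, so pushing the test object forward to $M_i$ produces an admissible competitor whose functional value converges to the original one.

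For the scalar case, I would fix $\epsilon>0$ and pick $v\in C_c^\infty(M_\infty)\setminus\{0\}$ with $\cF^{g_\infty}(v)\leq \Q(M_\infty,g_\infty)+\epsilon$ and $\supp v\subset B^{M_\infty,g_\infty}_R(x_\infty)$. For $i\geq i_0(R)$ the immersion $\phi_i^R$ is injective, so $v_i:=v\circ(\phi_i^R)^{-1}$ extended by zero defines an element of $C_c^\infty(M_i)\setminus\{0\}$. Pulling $\cF^{g_i}(v_i)$ back via $\phi_i^R$ rewrites it as the Yamabe functional of $(\phi_i^R)^*g_i$ evaluated on the fixed function $v$. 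Since $(\phi_i^R)^*g_i\to g_\infty$ in $C^2$ uniformly on $\supp v$, the scalar curvature $\scal^{(\phi_i^R)^*g_i}$, the coefficients of $\Delta^{(\phi_i^R)^*g_i}$ applied to the fixed $v$, and the volume form all converge uniformly on $\supp v$; consequently $\cF^{g_i}(v_i)\to \cF^{g_\infty}(v)$. Therefore $\limsup_i \Q(M_i,g_i)\leq \cF^{g_\infty}(v)\leq \Q(M_\infty,g_\infty)+\epsilon$, and letting $\epsilon\to 0$ finishes the first part.

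For the spin case I would run the same scheme with $\cFs$ in place of $\cF$ and then transfer the estimate to $\Qs$ via the renormalization $\Qs=4\frac{m-1}m(\lm)^2$. If $\lm(M_\infty,g_\infty)=+\infty$ the inequality is vacuous, so assume otherwise and pick $\phi\in C_c^\infty(M_\infty,S_{M_\infty})$ with $(D^{g_\infty}\phi,\phi)_{g_\infty}>0$ and $\cFs(\phi)\leq \lm(M_\infty,g_\infty)+\epsilon$. Since the maps $\phi_i^R$ preserve spin structures, the Bourguignon--Gauduchon identification of spinor bundles for varying metrics (as already invoked in the proof of Lemma~\ref{bdd-sol} via \cite[Sect.~3 and 4]{AGH03}) transports $\phi$ to a spinor $\phi_i\in C_c^\infty(M_i,S_{M_i})$. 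The Dirac operator depends only on first derivatives of the metric through the Christoffel symbols, so $C^1$-convergence of $(\phi_i^R)^*g_i$ to $g_\infty$ is exactly the regularity needed to obtain uniform convergence of the two Dirac operators applied to $\phi$ on $\supp\phi$. This yields $(D^{g_i}\phi_i,\phi_i)_{g_i}\to(D^{g_\infty}\phi,\phi)_{g_\infty}>0$ and $\Vert D^{g_i}\phi_i\Vert_{L^q(g_i)}\to\Vert D^{g_\infty}\phi\Vert_{L^q(g_\infty)}$, so for $i$ large $\phi_i$ is admissible and $\cFs^{g_i}(\phi_i)\to\cFs^{g_\infty}(\phi)$; the bound on $\limsup_i\lm(M_i,g_i)$ then follows exactly as above.

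The only point that deserves attention is the spin case, namely the comparison of Dirac operators on distinct spinor bundles associated with different metrics on the same manifold; this is precisely what the Bourguignon--Gauduchon isomorphism is designed for, and the $C^1$-hypothesis matches the first-order nature of $D$. There is no analytic obstruction otherwise: no compactness, no elliptic regularity, and no subsequence extraction is required, because the test objects are fixed with compact support and all relevant convergences are uniform on that support.
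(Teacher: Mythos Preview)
Your proof is correct and follows essentially the same route as the paper: pick an $\epsilon$-almost minimizer with compact support on $M_\infty$, transplant it to $M_i$ via $\phi_i^R$, and use $C^2$- (resp.\ $C^1$-) convergence of the pulled-back metrics to conclude convergence of the functional values. Your treatment of the spin case is in fact more explicit than the paper's, which merely says the argument is ``completely analogous'' and that $C^1$ suffices because $D$ is first order; your invocation of the Bourguignon--Gauduchon identification to compare spinors for different metrics is exactly the point that makes this analogy work.
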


\begin{proof}
For a given $\ep>0$ we take $v\in C^\infty_c(M_\infty)$ with
$\cF^{g_\infty}(v)< \Q(M_\infty,g_\infty)+\ep$. Choose $R>0$ such that
the support of $v$ is contained in  $B^{M_\infty,g_\infty}_R(x_\infty)$. 
For sufficiently large $i$ we then have 
\[ \Q(M_i,g_i)\leq \cF^{g_i}(v\circ(\phi_i^R)^{-1})=\cF^{(\phi_i^R)^*g_i}(v)\leq 
  \cF^{g_\infty}(v)+\ep < \Q(M_\infty,g_\infty)+2\ep\]
  where the second inequality uses that $\mathcal{F}^g$ depends only on derivatives of $g$ up to order $2$. The first part of the lemma follows in the limit $\ep\to 0$.

The spinorial statement is proven completely analogously. Here, convergence in $C^1$ is enough since the Dirac operator is of first order.
\end{proof}

In the articles
\cite{ADH} 
 and 
\cite{ADH09} 
the following situation was considered. Assume that $N^m$ is obtained
from $M^m$ by a surgery of dimension $k$. Then for any metric $g$
on $M$ a family of special metrics $g_\th$, $\theta>0$, was constructed.
It was proved in \cite{ADH} in combination with estimates given in
\cite{ammann.dahl.humbert:p11b} that for all $k\leq m-4$ and all 
$k=m-3\leq 3$ we have
  $$\lim_{\theta\to 0} \Q(N,g_\th)\geq \min\{\Q(M,g),\wLa_{m,k}\}.$$
Similarly it was proven in \cite{ADH09} for $k\leq m-2$ that 
  $$\lim_{\theta\to 0} \Qs(N,g_\th)\geq \min\{\Qs(M,g),\wLas_{m,k}\}.$$

We apply this construction to $M=\mS^m$ equipped with the standard metric $g=\sigma^m$.
Then $N=S^{k+1}\times S^{m-k-1}$. Thus we obtain a family of metrics $g_\th$ on 
$N=S^{k+1}\times S^{m-k-1}$ with 
   $$\lim_{\theta\to 0} \Q(S^{k+1}\times S^{m-k-1},g_\th)\geq \wLa_{m,k}\qquad \mbox{if }k\leq m-4\mbox{ or if }k=m-3\leq 3,$$
and 
   $$\lim_{\theta\to 0} \Qs(S^{k+1}\times S^{m-k-1},g_\th)\geq \wLas_{m,k}\qquad \mbox{if }k\leq m-2.$$

The following lemma is proven with exactly the same methods as in Subsection~6.3 of \cite{ADH}.
\begin{lemma}\label{lemma.all.c}
For any $c\in [0,1]$, there are points $x_\th\in S^{k+1}\times S^{m-k-1}$, $ \th\in (0,1)$, such that $(S^{k+1}\times S^{m-k-1},g_\th,x_\th)$ converges in the $C^\infty$-topology of pointed Riemannian manifolds
to $(\Mc^{m,k},x_0)$ where $x_0$ is an arbitrary base point.
\end{lemma}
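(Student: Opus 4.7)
The plan is to unwind the explicit construction of $g_\theta$ from \cite[Sec.~6]{ADH} and to place the base point $x_\theta$ in different regions of the degenerating geometry to realize each $c \in [0,1]$. Recall that $g_\theta$ on $\mS^{k+1}\times \mS^{m-k-1}$ is produced from the round sphere $\mS^m$ by cutting out a tubular neighborhood of the embedded $\mS^k$ and gluing in the other side of the surgery; after the conformal blow-up used in \cite{ADH}, the resulting metric, outside a small ball, has a long collar which in warped-product coordinates $(t,\omega,y)\in (-L_\theta, L_\theta)\times \mS^k\times \mS^{m-k-1}$ takes the form
\[
g_\theta = \d t^2 + f_\theta(t)^2 \sigma^k + \sigma^{m-k-1},
\]
with $L_\theta\to \infty$ as $\theta\to 0$ and with $f_\theta$ smoothly interpolating between the hyperbolic-sine profile $\sinh(t)$ deep inside the collar and a linear profile near the tips of the caps.

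First I would handle $c=1$ by taking $x_\theta$ at a fixed $t$-height (say $t=0$) inside the collar. For any fixed $R$, the ball $B_R(x_\theta)$ lies inside the collar once $\theta$ is small enough, and $f_\theta$ agrees with $\sinh$ on this ball up to a $C^\infty$-small error. Matching the coordinates $(t,\omega,y)$ to normal polar coordinates $(r,\omega,y)$ on $\Mc^{m,1}=\mH^{k+1}\times \mS^{m-k-1}$ supplies the diffeomorphisms $\phi_\theta^R$ required in the definition of pointed convergence, and one obtains $(\phi_\theta^R)^* g_\theta \to g_1$ in $C^\infty(B_R(x_0))$.

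For $c\in [0,1)$ I would translate $x_\theta$ along the $t$-direction to a position $t_\theta^{(c)}$ at which the warping $f_\theta$ locally reproduces $\sinh_c$ after a suitable parameter shift. The key observation is that the normalized second derivative $f_\theta''/f_\theta$ sweeps through the whole interval $[0,1]$ as $t$ runs across the collar and into the cap: in the interior of the collar it equals $1$ (giving $c=1$), while near the tip of the cap, where $f_\theta$ becomes essentially linear, it tends to $0$ (giving the Euclidean model $\R^{k+1}\times \mS^{m-k-1}$). For each $c\in [0,1)$ one then chooses $t_\theta^{(c)}$ so that this ratio equals $c^2$ and, simultaneously, $L_\theta-|t_\theta^{(c)}|\to\infty$, which ensures that balls of any fixed radius around $x_\theta$ stay inside the region where the warped-product description is valid.

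The main obstacle is verifying $C^\infty$-convergence on arbitrarily large balls, not merely a matching of low-order jets at $x_\theta$. Concretely, one must exhibit an explicit $t$-reparameterization identifying $f_\theta(t_\theta^{(c)}+\cdot)$ with $\sinh_c(\cdot)$ up to an error that tends to zero in $C^k$ on $[-R_\theta, R_\theta]$ for some $R_\theta\to\infty$; this is carried out exactly as in \cite[Subsec.~6.3]{ADH} and relies on the explicit piecewise description of $f_\theta$ used in the ADH surgery construction, in which large portions of the collar are themselves modelled on $\sinh_c$-profiles. Finally, the assertion that the limit does not depend on the choice of base point $x_0\in \Mc^{m,k}$ follows from the transitivity of $\mathrm{Isom}(\mH_c^{k+1})\times \mathrm{Isom}(\mS^{m-k-1})$ on $\Mc^{m,k}$.
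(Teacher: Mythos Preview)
Your proposal is correct and follows essentially the same approach as the paper, which simply states that the lemma ``is proven with exactly the same methods as in Subsection~6.3 of \cite{ADH}''; you have fleshed out that reference with a reasonable sketch of the warped-product geometry and the sliding-base-point argument, and you correctly identify the key technical step (the $C^\infty$-convergence on growing balls) as being handled verbatim in \cite[Subsec.~6.3]{ADH}. One small slip: in the $c=1$ paragraph you write $\Mc^{m,1}$ where you mean $\mathbb{M}_1^{m,k}$.
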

Lemmata~\ref{lemma.pointed.conv} and~\ref{lemma.all.c} imply
 \[\lim_{\theta\to 0} \Q(S^{k+1}\times S^{m-k-1},g_\th)\leq  \Q(\Mc^{m,k}) \]
and 
 \[\lim_{\theta\to 0} \Qs(S^{k+1}\times S^{m-k-1},g_\th)\leq \Qs(\Mc^{m,k}) \]
for all $c\in[0,1]$ with the same restrictions on $k$ as above.  Hence, we immediately obtain 
\begin{proposition} \label{wLas_leq_Las}
  \[\wLa_{m,k}\leq  \sLa_{m,k}\qquad \mbox{ for } k\leq m-4 \mbox{ and for } k=m-3\leq 3,\]
  \[\wLas_{m,k}\leq \sLas_{m,k} \qquad \mbox{ for } k\leq m-2.\]
\end{proposition}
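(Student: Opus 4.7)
The plan is to simply synthesize the two ingredients that have been laid out in the paragraphs immediately preceding the proposition. On the one hand, the surgery-monotonicity estimates from \cite{ADH} and \cite{ADH09}, applied to $M=\mS^m$ with its standard metric to produce the one-parameter family $g_\th$ on $N=\mS^{k+1}\times\mS^{m-k-1}$, give the lower bound
\[
\liminf_{\th\to 0}\Q(N,g_\th)\ \geq\ \wLa_{m,k}\qquad (k\le m-4\text{ or }k=m-3\le 3),
\]
and analogously $\liminf_{\th\to 0}\Qs(N,g_\th)\geq \wLas_{m,k}$ for $k\le m-2$. On the other hand, Lemma~\ref{lemma.all.c} tells us that for every $c\in[0,1]$ there are basepoints $x_\th\in N$ with $(N,g_\th,x_\th)\to(\Mc^{m,k},x_0)$ in the pointed $C^\infty$-topology of Riemannian manifolds (and, since the spin structures arise from the same surgery construction, of spin manifolds as well).

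Next I would feed this convergence into Lemma~\ref{lemma.pointed.conv}, which is the semicontinuity statement $\limsup \Q(N,g_\th)\le\Q(\Mc^{m,k})$ and $\limsup\Qs(N,g_\th)\le\Qs(\Mc^{m,k})$. Chaining the two bounds yields, for each $c\in[0,1]$,
\[
\wLa_{m,k}\ \leq\ \liminf_{\th\to 0}\Q(N,g_\th)\ \leq\ \limsup_{\th\to 0}\Q(N,g_\th)\ \leq\ \Q(\Mc^{m,k}),
\]
and likewise $\wLas_{m,k}\le \Qs(\Mc^{m,k})$ under the spin hypothesis on $k$. Taking the infimum over $c\in[0,1]$ on the right-hand side then gives exactly $\wLa_{m,k}\le \sLa_{m,k}$ and $\wLas_{m,k}\le \sLas_{m,k}$, which is the claim.

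There is essentially no obstacle left at this stage, since all the heavy machinery (the surgery-monotonicity theorems, the construction of $g_\th$, the pointed-convergence result of Lemma~\ref{lemma.all.c}, and the semicontinuity Lemma~\ref{lemma.pointed.conv}) has already been established; the only mild subtlety is to make sure that in the spinorial case the convergence $(N,g_\th,x_\th)\to(\Mc^{m,k},x_0)$ is genuinely in the pointed $C^1$-topology of spin manifolds, i.e.\ that the approximating diffeomorphisms can be chosen to intertwine the given spin structures. This is immediate from the explicit nature of the surgery construction of $g_\th$: the spin structure on $\Mc^{m,k}$ is the one inherited from $\mS^m\setminus\mS^k$ through the surgery, and the local diffeomorphisms $\phi_i^R$ provided by Lemma~\ref{lemma.all.c} are identities in the relevant normal form near the surgery region, so they lift to the spin bundles. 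Thus both inequalities of the proposition follow with no further work.
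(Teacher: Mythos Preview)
Your proposal is correct and follows exactly the approach the paper takes: the paragraphs preceding the proposition already assemble the surgery-monotonicity lower bounds $\liminf_{\th\to 0}\Q(N,g_\th)\geq \wLa_{m,k}$ (resp.\ $\Qs$, $\wLas$) and the upper bounds from Lemmata~\ref{lemma.pointed.conv} and~\ref{lemma.all.c}, and the proposition is stated as an immediate consequence after taking the infimum over $c\in[0,1]$. Your additional remark on why the pointed convergence respects the spin structures is a helpful clarification but not a departure from the paper's argument.
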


Note that in this proposition we do not get any statement about the invariants for $\Mc^{m,k}$ for a fixed $c$, compare to Corollary~\ref{wQs_leq_Qs}.

\section{Cut-off arguments}\label{sec_cut}

In this section we use cut-off functions to compare the $*$-invariants (which are defined as the infimum of a functional) with there $\sim$-counterparts (which are defined as the infimum of nonlinear eigenvalues).  

\begin{lemma}
Let $(M^m,g)$ be a complete connected $m$-dimensional Riemannian manifold. Then, $\Q (M,g)\leq \wQ (M,g)$.
\end{lemma}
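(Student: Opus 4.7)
The plan is a direct cut-off argument. Fix $v\in\Omega^{(1)}(M,g)$ with $\|v\|_{L^p(g)}=1$ and $L^g v=\mu_v v^{p-1}$; in particular $v\in C^2\cap L^\infty\cap L^2$, and $p=\tfrac{2m}{m-2}$. The goal is to produce compactly supported test functions whose Yamabe functional approaches $\mu_v$.

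First, fix a base point $x_0\in M$ and, invoking completeness of $(M,g)$, choose smooth cut-offs $\eta_R\in C^\infty_c(M)$ with $0\leq\eta_R\leq 1$, $\eta_R\equiv 1$ on $B_R(x_0)$, $\supp\eta_R\subset B_{2R}(x_0)$, and $|\nabla\eta_R|_g\leq C/R$ for a constant $C$ independent of $R$. Set $v_R:=\eta_R v\in C^2_c(M)$, which is nontrivial for all sufficiently large $R$. A routine mollification upgrades $v_R$ to a $C^\infty_c$ competitor with the same limiting value of $\cF$, so I work with $v_R$ directly.

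The central identity is
\[
\int_M v_R\,L^g v_R\;\vo_g \;=\; \mu_v\int_M \eta_R^2\,v^p\;\vo_g \;+\; a\int_M v^2\,|\nabla\eta_R|^2\;\vo_g.
\]
To derive it, start from $\int v_R L^g v_R = a\int|\nabla v_R|^2 + \int\scal^g v_R^2$ (Green's identity, valid since $v_R$ is compactly supported) and expand $|\nabla v_R|^2 = v^2|\nabla\eta_R|^2 + 2\eta_R v\,\nabla v\cdot\nabla\eta_R + \eta_R^2|\nabla v|^2$. The last two summands combine to $\nabla v\cdot\nabla(\eta_R^2 v)$, and integration by parts against the compactly supported $\eta_R^2 v$ gives $\int\nabla v\cdot\nabla(\eta_R^2 v)\,\vo_g = \int \eta_R^2 v\,\Delta^g v\;\vo_g$. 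Collecting with the scalar curvature term produces $\int \eta_R^2 v\,L^g v = \mu_v\int \eta_R^2 v^p$ via the Euler-Lagrange equation, which yields the identity.

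Letting $R\to\infty$, dominated convergence gives $\int\eta_R^2 v^p\to\|v\|_p^p=1$ and $\int\eta_R^p v^p\to 1$, while the cut-off error is controlled by
\[
0\leq \int_M v^2|\nabla\eta_R|^2\;\vo_g \leq \frac{C^2}{R^2}\|v\|_{L^2(g)}^2 \longrightarrow 0,
\]
crucially using $v\in L^2(M,g)$. Hence $\cF(v_R)\to\mu_v$, so $\Q(M,g)\leq\mu_v$; taking the infimum over $\Omega^{(1)}(M,g)$ yields $\Q(M,g)\leq\wQ(M,g)$. The only subtle ingredient is recognizing that the $L^2$ membership built into $\Omega^{(1)}$ is precisely what forces the gradient cut-off error to vanish; without it the argument would not close. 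Existence of smooth cut-offs on a complete manifold, the integration-by-parts identity, and mollification from $C^2_c$ to $C^\infty_c$ are all standard.
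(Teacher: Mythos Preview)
Your proof is correct and follows essentially the same cut-off argument as the paper's: both use $\eta_R v$ as test function, exploit the identity $\int \eta_R v\,L^g(\eta_R v)=\int \eta_R^2 v\,L^g v + a\int v^2|\nabla\eta_R|^2$, and kill the error term via $v\in L^2$. Your write-up is slightly more detailed (spelling out the integration by parts and the $C^2_c\to C^\infty_c$ mollification), but there is no substantive difference in strategy.
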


\begin{proof} (cp. \cite[Lem.~3.5]{ADH}) Let $v\in C^2(M)\cap L^\infty(M)\cap L^2(M)$, $v\geq 0$, satisfying $L^gv=\mu_v v^{p-1}$ with $\mu_v\in \R_{\geq 0}$ and $\Vert v\Vert_{L^p}=1$ where $p=\frac{2m}{m-2}$. 
 We fix $z\in M$. Let $\eta_r$ be a smooth cut-off function with values in $[0,1]$, $\eta_r=0$ on $M\setminus B_{2r}(z)$, $\eta_r=1$ on $B_{r}(z)$, and $|d\eta_r|\leq 2r^{-1}$. Then, 
 
 \begin{align*}
  \Q(M,g)\leq & \frac{\int_M \eta_r v L^g(\eta_r v) \vo_g}{\Vert \eta_r v\Vert_{L^p(g)}^2}= \frac{\int_M \eta_r^2vL^gv+a|d\eta_r|^2v^2 \vo_g}{\Vert \eta_r v\Vert_{L^p(g)}^2}\\
  \leq & \frac{\int_M \mu_v \eta_r^2v^p+a4r^{-2}v^2 \vo_g}{\Vert \eta_r v\Vert_{L^p(g)}^2} \to \mu_v \text{\ as\ } r\to \infty.
  \end{align*}

\end{proof}

\begin{corollary}
\label{La_leq_wLa}
$\Q(\Mc^{m,k})\leq \wQ(\Mc^{m,k})$ and $\sLa_{m,k}\leq \wLa_{m,k}$ for all $m,k$.
\end{corollary}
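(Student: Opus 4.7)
The plan is to observe that this corollary is essentially immediate from the preceding lemma once one verifies that the lemma applies to each model space $\Mc^{m,k}$ and that the infimum over $c\in[0,1]$ preserves the inequality.

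First, I would note that for each fixed $c\in[0,1]$, the model space $\Mc^{m,k}=\mH_c^{k+1}\times\mS^{m-k-1}$ is a complete connected Riemannian manifold (being a Riemannian product of two complete connected manifolds). Therefore the preceding lemma applies directly and yields the first inequality
\[ \Q(\Mc^{m,k})\leq \wQ(\Mc^{m,k}). \]
This gives the pointwise (in $c$) statement and is the first half of the corollary.

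Next, since this inequality holds for every $c\in[0,1]$, I can take the infimum of both sides over $c\in[0,1]$. Using the definitions
\[ \sLa_{m,k}=\inf_{c\in[0,1]}\Q(\Mc^{m,k}),\qquad \wLa_{m,k}=\inf_{c\in[0,1]}\wQ(\Mc^{m,k}), \]
and the elementary fact that $f(c)\leq h(c)$ for all $c$ implies $\inf_c f(c)\leq \inf_c h(c)$, we immediately deduce
\[ \sLa_{m,k}\leq \wLa_{m,k}. \]

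There is no real obstacle in this proof: it is a purely formal consequence of the preceding lemma and the definitions of the $\Lambda$-invariants. The substantive content lies entirely in the preceding lemma's cut-off argument, which has already been carried out. One thing worth remarking, though not a difficulty, is that the inequality holds without any restriction on $k$ or $m$, in contrast to the reverse inequalities recorded in Proposition~\ref{wLas_leq_Las}, which needed codimension assumptions coming from the surgery construction.
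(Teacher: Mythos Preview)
Your proposal is correct and matches the paper's treatment: the corollary is stated in the paper with no separate proof, as it follows immediately from the preceding lemma applied to each complete connected model space $\Mc^{m,k}$, together with taking the infimum over $c\in[0,1]$.
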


\begin{lemma}\label{lem_comparelm1}
Let $(M^m,g)$ be a complete connected $m$-dimensional Riemannian spin manifold. Assume that $D$ is $L^{q^*=\frac{2m}{m+1}}$-invertible. Then $\Qs (M,g)\leq \wQs (M,g)$.
\end{lemma}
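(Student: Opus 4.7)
The goal is to show $\lm(M,g) \le \wlm(M,g)$; squaring and multiplying by $4(m-1)/m$ then yields $\Qs(M,g) \le \wQs(M,g)$ via \eqref{ren_inv}. Fix any admissible $\lambda > 0$ and $\phi \in L^\infty \cap L^2 \cap C^1$ in the sense of \eqref{def_solLa}, so that $D\phi = \lambda |\phi|^{2/(m-1)}\phi$ and $0 < \Vert\phi\Vert_q \le 1$, where $q := \frac{2m}{m-1}$ and $q^* := \frac{2m}{m+1}$ is its conjugate exponent. The plan is to produce $\phi_n \in C_c^\infty(M,S)$ with $(D\phi_n, \phi_n)_g > 0$ for large $n$ and $\cFs(\phi_n) \to \lambda \Vert\phi\Vert_q^{2/(m-1)} \le \lambda$; letting $\lambda \to \wlm(M,g)$ then gives $\lm(M,g) \le \wlm(M,g)$.

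The direct cut-off $\eta_r \phi$ used in the previous lemma is not immediately available here, since $\phi$ is not \emph{a priori} in $L^{q^*}$ (recall $q^* < 2$), so the error term $\Vert\nabla \eta_r \cdot \phi\Vert_{q^*}$ cannot be controlled straight away. The decisive observation is that $\psi := D\phi = \lambda |\phi|^{q-2}\phi$ lies automatically in $L^{q^*}$: using $q^*(q-1) = q$ one computes $\Vert\psi\Vert_{q^*}^{q^*} = \lambda^{q^*} \Vert\phi\Vert_q^q$. By the $L^{q^*}$-invertibility of $D$ there is a unique $\tilde\phi \in \tilde{H}_1^{q^*}$ with $D\tilde\phi = \psi$, and the first main step is to identify $\phi$ with $\tilde\phi$. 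By the duality $(D_{q^*})^* = D_q$ recalled from \cite{ammann.grosse:p13a}, $D$ is also $L^q$-invertible; the difference $\chi := \phi - \tilde\phi$ lies in $L^q$ (for $\tilde\phi$ this uses the critical Sobolev embedding $\tilde{H}_1^{q^*} \hookrightarrow L^q$, consistent with $\frac{1}{q^*} - \frac{1}{m} = \frac{1}{q}$) and satisfies $D\chi = 0$ weakly, hence must vanish by injectivity of $D_q$. Thus $\phi \in \tilde{H}_1^{q^*}$, and in particular $\phi \in L^{q^*}$.

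With this in hand, either density of $C_c^\infty(M,S)$ in $\tilde{H}_1^{q^*}$ combined with the Sobolev embedding to deduce $L^q$-convergence, or a direct cut-off $\eta_{r_n}\phi$ followed by mollification (now admissible since $\phi \in L^{q^*}$), produces $\phi_n \in C_c^\infty(M,S)$ with $\phi_n \to \phi$ in $L^q$ and $D\phi_n \to D\phi$ in $L^{q^*}$. Then $\Vert D\phi_n\Vert_{q^*} \to \Vert D\phi\Vert_{q^*} = \lambda \Vert\phi\Vert_q^{(m+1)/(m-1)}$, and the splitting
\begin{equation*}
(D\phi_n, \phi_n)_g - (D\phi, \phi)_g = (D(\phi_n - \phi), \phi_n)_g + (D\phi, \phi_n - \phi)_g
\end{equation*}
together with H\"older gives $(D\phi_n, \phi_n)_g \to (D\phi, \phi)_g = \lambda \Vert\phi\Vert_q^q > 0$. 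A short computation with $q/q^* = (m+1)/(m-1)$ then yields $\cFs(\phi_n) \to \lambda \Vert\phi\Vert_q^{2/(m-1)} \le \lambda$, and for large $n$ each $\phi_n$ is admissible in the definition of $\lm$. The main technical obstacle is the identification $\phi = \tilde\phi$ in $\tilde{H}_1^{q^*}$: it uses $L^{q^*}$- and $L^q$-invertibility of $D$ in tandem, the critical Sobolev embedding of $\tilde{H}_1^{q^*}$ into $L^q$, and implicitly the uniqueness of closed $L^s$-extensions of $D$ encoded in $(D_s)^* = D_{s^*}$.
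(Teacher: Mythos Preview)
Your argument follows essentially the same route as the paper: first show $\phi\in L^{q^*}$ from the $L^{q^*}$-invertibility hypothesis, then approximate $\phi$ by compactly supported spinors and pass to the limit in $\cFs$. The paper carries out the approximation with an explicit cut-off $\eta_r\phi$ and computes $(D(\eta_r\phi),\eta_r\phi)=\lambda\int_M\eta_r^2|\phi|^q$ directly (using that $\langle d\eta_r\cdot\phi,\phi\rangle_x\in i\mR$), whereas you invoke density in $\tilde H_1^{q^*}$; these are equivalent maneuvers.

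One technical caution: to conclude $\tilde\phi\in L^q$ you appeal to the critical Sobolev embedding $\tilde H_1^{q^*}\hookrightarrow L^q$. This holds on manifolds of bounded geometry (where $\tilde H_1^{q^*}=H_1^{q^*}$, cf.\ Section~\ref{sec_prelim}), but the lemma assumes only completeness. The paper's own step ``by the $L^{q^*}$-invertibility of $D$ we get that $\phi\in L^{q^*}$'' is terse and does not spell this out either; both arguments ultimately lean on the consistency of the closed extensions $D_s$ encoded in $(D_s)^*=D_{s^*}$ from \cite{ammann.grosse:p13a}, which you rightly flag at the end. For the intended applications to $\Mc^{m,k}$ (Corollary~\ref{Qs_leq_wQs}) bounded geometry holds and the point is moot.
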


\begin{proof}
Let $\lambda=\wlm (M,g)$. By definition of $\wlm$, cf. \eqref{def_solLa}, there is a $\phi\in L^2\cap L^\infty\cap C^1$ with $D\phi=\lambda |\phi|^{q-2}\phi$ and $\Vert \phi\Vert_q=1$ where $q=\frac{2m}{m-1}$. Then, $D\phi\in L^{q^*}$, and by the $L^{q^*}$-invertibility of $D$ we get that $\phi\in L^{q^*}$. Hence, $\lambda >0$ since otherwise $\phi$ would be a nonzero $L^{q^*}$-harmonic spinor which contradicts the $L^{q^*}$-invertibility.

We fix $z\in M$. Let $\eta_r$ be a smooth cut-off function with values in $[0,1]$, $\eta_r=0$ on $M\setminus B_{2r}(z)$, $\eta_r=1$ on $B_{r}(z)$, and $|\d\eta_r|\leq 2r^{-1}$. Then 
\[\underbrace{(D(\eta_r \phi), \eta_r\phi)}_{\in \R}= (\d \eta_r\cdot 
\phi, \phi)+\underbrace{(\eta_r^2D\phi,\phi)}_{\in \R}=\lambda \int_M \eta_r^2 |\phi|^q\vo_g>0\] where we used that the summand including $\d\eta_r$ vanishes due to $\< \d \eta_r\cdot \phi, \phi\>_x\in \rm{i}\R$. Thus,

\begin{align*} 
 \lm (M,g,\chi)&\leq \frac{\Vert D(\eta_r \phi)\Vert_{q^*}^2}{(D(\eta_r \phi),\eta_r \phi) }\leq \frac{\left( \Vert \d \eta_r\cdot \phi\Vert_{q^*} + \Vert\eta_r D \phi\Vert_{{q^*}}\right)^2}{\lambda \int_M \eta_r^2 |\phi|^{q}\vo_g }\\
&\leq \frac{\left( \frac{2}{r} \Vert \phi\Vert_{q^*} + \lambda \left(\int_M \eta_r^{q^*} |\phi|^{q}\vo_g\right)^{1/q^*}\right)^2}{\lambda \int_M \eta_r^2 |\phi|^q\vo_g }\\
&\to \lambda\, \Vert \phi\Vert_{q}^{q(2-q^*)/q^*}=\lambda\,\Vert \phi\Vert_{q}^{\frac{2}{m-1}}\leq \lambda
\end{align*}
as $r\to \infty$. Note that the summand $\frac{1}{r}\Vert \phi\Vert_{q^*}\to 0$ since $\phi\in L^{q^*}$ as shown above. Hence, $\Qs\leq \wQs$.
\end{proof}

\begin{corollary}\label{Qs_leq_wQs} For all $c\in [0,1]$ and $k\leq m-1$, we have $\Qs(\Mc^{m,k}, g_c)\leq \wQs(\Mc^{m,k}, g_c).$ In particular, $\sLas_{m,k}\leq \wLas_{m,k}$.
\end{corollary}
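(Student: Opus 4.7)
The plan is to obtain this as a direct application of Lemma~\ref{lem_comparelm1} to each model space $(\Mc^{m,k}, g_c)$, after verifying that its Dirac operator satisfies the hypothesis of $L^{q^*}$-invertibility. The $\Lambda$-statement then follows by taking the infimum over $c\in[0,1]$ on both sides.

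The first step is to check the invertibility hypothesis via Proposition~\ref{inv_Mc}. For $q^*=\frac{2m}{m+1}$ one computes
\[ \left|\frac{1}{q^*}-\frac{1}{2}\right|=\left|\frac{m+1}{2m}-\frac{1}{2}\right|=\frac{1}{2m}, \]
so the condition $\lambda_1=\frac{m-k-1}{2}>ck\left|\frac{1}{q^*}-\frac{1}{2}\right|$ of Proposition~\ref{inv_Mc} reduces to
\[ m(m-k-1)>ck. \]
For $c\in[0,1]$ and $k\leq m-2$ the left side is at least $m$, while the right side is at most $k\leq m-2<m$, so the inequality holds. Thus $D^{g_c}\colon L^{q^*}\to L^{q^*}$ on $\Mc^{m,k}$ is invertible in this range, and Lemma~\ref{lem_comparelm1} applies and yields $\Qs(\Mc^{m,k},g_c)\leq \wQs(\Mc^{m,k},g_c)$.

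Having the pointwise inequality in $c$, we conclude
\[ \sLas_{m,k}=\inf_{c\in[0,1]}\Qs(\Mc^{m,k},g_c)\leq \inf_{c\in[0,1]}\wQs(\Mc^{m,k},g_c)=\wLas_{m,k}. \]

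There is no real obstacle: the content is entirely in Lemma~\ref{lem_comparelm1} and Proposition~\ref{inv_Mc}. The only point deserving attention is the boundary case $k=m-1$, where the estimate $m(m-k-1)>ck$ degenerates to $0>c(m-1)$ and so Proposition~\ref{inv_Mc} does not directly yield $L^{q^*}$-invertibility; one must verify that case by hand (or simply restrict the range to $k\leq m-2$, which is the range in which the companion inequality of Proposition~\ref{wLas_leq_Las} is proved and which is reflected in Figure~\ref{fig.result}).
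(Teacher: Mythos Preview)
Your argument for $k\leq m-2$ is correct and is exactly the paper's proof: apply Proposition~\ref{inv_Mc} with $s=q^*=\frac{2m}{m+1}$, check that $m(m-k-1)>ck$ holds for all $c\in[0,1]$ when $k\leq m-2$, and invoke Lemma~\ref{lem_comparelm1}.

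However, the statement as written includes $k=m-1$, and here your proposal leaves a genuine gap: you note that the invertibility criterion fails and say one must ``verify that case by hand'' or restrict the range, but you do neither. The paper does not restrict the range; it handles $k=m-1$ by appealing to Lemma~\ref{1_wQs}, which proves $\wQs(\mathbb{M}_1^{m,m-1})=\wQs(\mH^m)=\infty$. By scaling this gives $\wQs(\mH_c^m)=\infty$ for all $c\in(0,1]$, and for $c=0$ one has $\wQs(\mR^m)\geq\Q(\mS^m)$ (see Section~\ref{km1}). Since $\Qs(\Mc^{m,m-1})\leq\Qs(\mS^m)=\Q(\mS^m)$ always (Remark~\ref{inv_sphere}), the inequality $\Qs\leq\wQs$ is then trivially satisfied for $k=m-1$. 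Your proposal is thus incomplete precisely at this boundary case, and the missing ingredient is not a variant of the cut-off argument but rather the removal-of-singularities and unique-continuation machinery behind Lemma~\ref{1_wQs}.
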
 

\begin{proof} We start with $k\leq m-2$. Lemma~\ref{lem_comparelm1} and Proposition~\ref{inv_Mc} imply $\Qs(\Mc^{m,k}, g_c)\leq \wQs(\Mc^{m,k}, g_c)$ for all $\frac{m-k-1}{2}> ck (\frac{m+1}{2m}-\frac{1}{2})=\frac{ck}{2m}$, i.e., for all $k \leq m-2$ and $c\in [0,1]$.

The remaining case $k=m-1$ follows directly from Lemma~\ref{1_wQs}.
\end{proof}

\section{\texorpdfstring{The model space $\mathbb{M}_1^{m,k}$}{The model spaces for c=1}}\label{c=1}

For $c=1$ the model spaces $\Mc^{m,k}$ is very special: The manifold \[(\mathbb{M}_1^{m,k}=\mH^{k+1}\times \mS^{m-k-1}, g_1=g_{\mH^{k+1}}+\sigma^{m-k-1}= \sinh^2 t\, \sigma^{k+1} +dt^2+\sigma^{m-k-1})\] is conformal to $(\mS^m\setminus \mS^k, \sigma^m)$, \cite[Prop.~3.1]{ADH},
 \begin{equation}\label{conf_M1} \mathfrak{u}: \mH^{k+1}\times \mS^{m-k-1} \to \mS^m\setminus \mS^k,\  g_1=f^2\mathfrak{u}^* \sigma^m\ {\rm where\ } f=f(t)=\cosh^2 t\end{equation}
 where $\cosh t=(\sin r)^{-1}$ with $r=\dist(. , \mS^k)$.

Using this conformal map, we will immediately obtain some of the $Q$-invariants of $\mathbb{M}_1^{m,k}$.

\begin{lemma}\label{1_QS}
 $\Q(\mathbb{M}_1^{m,k})=\Q(\mS^m)=\Qs (\mathbb{M}_1^{m,k})$.
\end{lemma}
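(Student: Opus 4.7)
My plan is to sandwich $\Q(\mathbb{M}_1^{m,k})$ and $\Qs(\mathbb{M}_1^{m,k})$ between $\Q(\mS^m) = \Qs(\mS^m)$ from above and below. The upper bounds are immediate from Remark~\ref{inv_sphere}, which bounds $\Q$ and $\Qs$ of any $m$-dimensional Riemannian (spin) manifold by $\Q(\mS^m)$. So the proof reduces to producing matching lower bounds, and the common engine will be the conformal diffeomorphism~\eqref{conf_M1}, $\mathfrak{u}\colon \mathbb{M}_1^{m,k}\to \mS^m\setminus \mS^k$ with $g_1 = f^2\mathfrak{u}^*\sigma^m$.

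For the scalar lower bound I will exploit the conformal invariance of the Yamabe functional. Setting $u := f^{(m-2)/2}$, so that $g_1 = u^{4/(m-2)}\mathfrak{u}^*\sigma^m$, the classical conformal transformation law of the conformal Laplacian yields $\cF^{g_1}(v) = \cF^{\sigma^m}(w)$ for every $v\in C_c^\infty(\mathbb{M}_1^{m,k})$, where $w := (uv)\circ\mathfrak{u}^{-1}$. Since $\mathfrak{u}(\supp v)$ is a compact subset of the open set $\mS^m\setminus \mS^k$, the function $w$ extends smoothly by zero to an element of $C_c^\infty(\mS^m)$. Applying the sharp Sobolev inequality on the round sphere from Remark~\ref{inv_sphere}, namely $\cF^{\sigma^m}(w)\geq \Q(\mS^m)$ for every admissible $w$, immediately gives $\Q(\mathbb{M}_1^{m,k})\geq \Q(\mS^m)$.

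The spinorial lower bound follows the same pattern via the conformal covariance of the Dirac operator. The conformal change $g_1 = f^2\mathfrak{u}^*\sigma^m$ canonically identifies the spinor bundles over $\mathbb{M}_1^{m,k}$ and $\mS^m\setminus \mS^k$, and the standard rescaling of spinors by the appropriate power of the conformal factor (as used in Example~\ref{no_Linfty_bound}, cp.~\cite{hijazi}) sends a $\phi\in C_c^\infty(S_{\mathbb{M}_1^{m,k}})$ to a spinor $\tilde\phi$ supported compactly in $\mS^m\setminus \mS^k$, intertwines the two Dirac operators so as to preserve the spinorial Yamabe functional $\cFs$, and carries the positivity $(D\phi,\phi)_{g_1}>0$ to the analogous positivity on $\mS^m$. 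Extending $\tilde\phi$ smoothly by zero to $\mS^m$ and invoking the attainment statement for $\lm(\mS^m)$ from Remark~\ref{inv_sphere} then yields $\lm(\mathbb{M}_1^{m,k})\geq \lm(\mS^m)$, and the renormalization~\eqref{ren_inv} promotes this to $\Qs(\mathbb{M}_1^{m,k})\geq \Qs(\mS^m) = \Q(\mS^m)$.

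I do not anticipate a substantive obstacle. The only point needing care is that the extension by zero across the closed submanifold $\mS^k\subset \mS^m$ must produce bona fide test functions and spinors on $\mS^m$; but since every $v$ (respectively $\phi$) in question is compactly supported in $\mathbb{M}_1^{m,k}$, its image under $\mathfrak{u}$ stays a positive distance from $\mS^k$, so the extension is automatically smooth and no capacity-type argument across the singular locus is needed. The proof thus amounts to a bookkeeping assembly of the conformal covariance of $L^g$ and $D^g$ with the sharp Sobolev inequalities on the round sphere.
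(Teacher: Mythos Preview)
Your proposal is correct and follows essentially the same approach as the paper's proof: both use the conformal diffeomorphism $\mathfrak{u}$ to identify $\Q(\mathbb{M}_1^{m,k})$ with $\Q(\mS^m\setminus\mS^k)$ (and likewise for $\Qs$), bound this from below by $\Q(\mS^m)$ via extension-by-zero of compactly supported test functions/spinors, and obtain the matching upper bound from the universal maximality of $\Q(\mS^m)$ in Remark~\ref{inv_sphere}. The paper states these steps more tersely, but the content is the same.
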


\begin{proof} By conformal invariance  $\Q(\mathbb{M}_1^{m,k}, g_1)=\Q(\mS^m\setminus \mS^k, \sigma^m)$. Since $\Q$ is defined over test functions, we have $\Q(\mS^m\setminus \mS^k)\geq \Q(\mS^m)$. On the other hand $\Q(\mS^m)$ is the highest possible value for $\Q$, see Remark~\ref{inv_sphere}, and thus $\Q(\mathbb{M}_1^{m,k})=\Q(\mS^m)$. With analogous arguments one gets $\Qs(\mathbb{M}_1^{m,k})=\Qs(\mS^m)$. Together with $\Q(\mS^m)=\Qs(\mS^m)$ the lemma follows.
\end{proof}

In order to examine $\wQ(\mathbb{M}_1^{m,k})$ and $\wQs (\mathbb{M}_1^{m,k})$ we will need modifications of the removal of singularities results in Lemma~\ref{removal-D} and~\ref{removal-L}.

 \begin{lemma}\label{removal-D+}
 Let $(M,g)$ be an $m$-dimensional Riemannian spin manifold, and let $S\subset M$ be an embedded submanifold of dimension $\ell\leq m-1$. Assume that $\phi$ is a spinor field such that $\int_{U_\epsilon(S)} \frac{1}{\rho} |\phi|^2<\infty$ where $U_\epsilon(S)$ consists of all points of $M$ with distance $\rho\leq \epsilon$ to $S$.  Moreover, let $D\phi=\lambda |\phi|^{q-2}\phi$ weakly on $M\setminus S$ for $\lambda>0$. Then $D\phi=\lambda |\phi|^{q-2}\phi$ weakly on~$M$.
\end{lemma}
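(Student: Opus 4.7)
The plan is to adapt the argument of Lemma~\ref{removal-D} by replacing the linear cut-off with a logarithmic one, tailored to the codimension-one setting where the weighted hypothesis $\int_{U_\epsilon(S)} |\phi|^2/\rho < \infty$ is the natural substitute for a plain $L^s$-bound. Concretely, fix $\delta_0 \in (0,\epsilon)$ small enough that $\rho = \dist(\cdot,S)$ is smooth on $U_S(\delta_0)\setminus S$, and for $0<\delta<\delta_0$ set
\[
\eta_\delta(x):=\zeta\!\left(\frac{\log(\delta_0/\rho(x))}{\log(\delta_0/\delta)}\right),
\]
where $\zeta\colon\R\to[0,1]$ is a fixed smooth monotone function with $\zeta(t)=1$ for $t\ge 1$ and $\zeta(t)=0$ for $t\le 0$. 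Then $\eta_\delta=1$ on $U_S(\delta)$, $\eta_\delta=0$ outside $U_S(\delta_0)$, and $|\nabla\eta_\delta|\le C/(\rho\log(\delta_0/\delta))$ on the annular region, so in particular $\rho|\nabla\eta_\delta|^2 \le C/(\rho\log^2(\delta_0/\delta))$. This pointwise bound is the decisive gain over the linear cut-off.

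Given a test spinor $\psi \in C_c^\infty(M,S)$, the spinor $(1-\eta_\delta)\psi$ vanishes in a neighbourhood of $S$ and is thus admissible in the weak equation on $M\setminus S$. Expanding $D((1-\eta_\delta)\psi)=(1-\eta_\delta)D\psi - \nabla\eta_\delta\cdot\psi$ and rearranging yields
\[
\int_M\<\phi,D\psi\> - \la\int_M\<|\phi|^{q-2}\phi,\psi\>
= \int_M\<\phi,\eta_\delta D\psi\> + \int_M\<\phi,\nabla\eta_\delta\cdot\psi\> - \la\int_M\<|\phi|^{q-2}\phi,\eta_\delta\psi\>.
\]
The first and third error terms tend to zero by dominated convergence: $\eta_\delta$ is dominated by the indicator of $U_S(\delta_0)$ and converges to zero pointwise on $M\setminus S$, while the weighted hypothesis gives $\phi\in L^2_{\rm loc}$ near $S$, and the standing regularity (together with the local boundedness of $\psi$) supplies the required local integrability of $|\phi|^{q-1}|\psi|$.

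The heart of the argument is the middle term, which is estimated by Cauchy--Schwarz with the weight $\rho$:
\[
\left|\int_M\<\phi,\nabla\eta_\delta\cdot\psi\>\right|
\le \left(\int_{U_S(\delta_0)} \frac{|\phi|^2}{\rho}\right)^{1/2}
\left(\int_{U_S(\delta_0)} \rho|\nabla\eta_\delta|^2|\psi|^2\right)^{1/2}.
\]
The first factor is finite by hypothesis and independent of $\delta$. For the second, working in tubular coordinates around $S$ where $\vo\sim \rho^{m-\ell-1}\,\d\rho\,\d A_S$ and using the bound on $\rho|\nabla\eta_\delta|^2$, one obtains
\[
\int_{U_S(\delta_0)\cap\supp\psi}\rho|\nabla\eta_\delta|^2|\psi|^2
\le \frac{C\Vert\psi\Vert_\infty^2}{\log^2(\delta_0/\delta)}\int_\delta^{\delta_0}\rho^{m-\ell-2}\,\d\rho,
\]
which is $O(1/\log(\delta_0/\delta))$ in the critical codimension-one case $\ell=m-1$ (the inner integral is a logarithm) and $O(\log^{-2}(\delta_0/\delta))$ for $\ell\le m-2$. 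Either way it tends to zero as $\delta\to 0$.

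The main obstacle, and the essential novelty relative to Lemma~\ref{removal-D}, is producing a cut-off whose gradient is dually balanced against the weight $1/\rho$ in the hypothesis: the logarithmic profile is precisely what makes $\rho|\nabla\eta_\delta|^2$ integrably small, and a linear cut-off (whose error is $O(1)$ in codimension one) simply cannot reach this case. Once the weighted Cauchy--Schwarz estimate is in place, the rest of the proof is routine and parallels that of Lemma~\ref{removal-D}.
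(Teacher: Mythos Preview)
Your proof is correct and follows essentially the same approach as the paper's: a logarithmic cut-off near $S$ combined with a weighted Cauchy--Schwarz estimate pairing $|\phi|^2/\rho$ against $\rho|\nabla\eta_\delta|^2$. The only cosmetic difference is the parametrization of the cut-off---you fix the outer radius $\delta_0$ and let the inner radius $\delta\to 0$ (so the small parameter is $1/\log(\delta_0/\delta)$), whereas the paper shrinks both radii (outer $\delta$, inner $\delta e^{-1/\delta}$, small parameter $\delta$); the resulting gradient bounds $|\nabla\eta_\delta|\lesssim (\text{small})/\rho$ and the subsequent estimates are equivalent.
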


\begin{proof}
 We adapt the proof of Lemma~\ref{removal-D}: Let $\tilde{\eta}_\delta$ be the function on $M$ defined by
 
  \[ \tilde{\eta}_{\delta}(x)= \left\{  \begin{matrix} 
                              0&  \text{for\ }\rho:=\dist(x,S) \geq \rho_0:=\delta\\
                              \delta \log (\rho_0/\rho) & \text{for\ } \rho_0\geq \rho\geq \rho_1:= \rho_0 e^{-1/\delta}\\
                              1 &  \text{for\ } \rho_1\geq \rho.
                             \end{matrix}
               \right.\]
 We smooth out $\tilde{\eta}_\delta$ in such a way that the resulting function $\eta_\delta$ still fulfills $\eta_\delta(x)=1$ for $\rho\geq \rho_0$, $\eta_\delta(x)=0$ for $\rho\leq \rho_1$, and $|\nabla\eta_\delta|\leq \frac{2\delta}{\rho}$.
 
 Then, for a smooth and compactly supported spinor $\psi$ on $M$ we obtain

  \begin{align*}
    \int_M \left< \phi, D\psi \right>-\lambda  \int_M \left< |\phi|^{q-2}\phi, \psi \right> &= \int_M \left< \phi, D (1-\eta_\delta) \psi \right>-\lambda  \int_M \left< |\phi|^{q-2}\phi, (1- \eta_\delta)\psi \right>\\
    & + \int_M \left< \phi, \eta_\delta D\psi \right>+\int_M \left< \phi, \nabla \eta_\delta \cdot \psi \right>-\lambda  \int_M \left< |\phi|^{q-2}\phi, \eta_\delta \psi \right>.
  \end{align*}
  The sum of the first two summands on the right side vanish because the equation holds on $M\setminus S$. The terms $\int_M \left< \phi, \eta_\delta D\psi \right>$ and $ \int_M \left< |\phi|^{q-2}\phi, \eta_\delta \psi \right>$ vanish for the same reason as in the proof of Lemma~\ref{removal-D}. The remaining term is now estimated by
 \begin{align*}
  \left| \int_M \left< \phi, \nabla \eta_\delta \cdot \psi \right>\right|&\leq C\delta  \int_{U_\delta(S)\cap \supp \psi} \frac{1}{\rho}|\phi|  \leq C\delta \left(\int_{U_\delta(S)} \frac{1}{\rho}|\phi|^2\right)^{\frac{1}{2}}   \left(\int_{U_\delta(S)\cap \supp \psi} \frac{1}{\rho}\right)^{\frac{1}{2}}\\
 &\leq C'\delta \underbrace{\left(\int_{U_\delta(S)} \frac{1}{\rho}|\phi|^2\right)^{\frac{1}{2}}}_{\to 0\text{\ as\ }\delta\to 0}   \underbrace{\left(\int_{\delta e^{-1/\delta}}^\delta  \frac{1}{\rho}\rho^{m-\ell-1}\, \d\rho\right)^{\frac{1}{2}}}_{
 \text{is\ } \delta^{-1/2} \text{\ for $m=\ell+1$ and\ }\leq \hat{C} \delta^{(m-\ell-1)/2} \text{\ else}}\to 0 
 \end{align*}
as $\delta\to 0$ which concludes the proof.
\end{proof}

\begin{lemma}\label{removal-L+}
 Let $(M,g)$ be an $m$-dimensional Riemannian spin manifold, and let $S\subset M$ be an embedded submanifold of dimension $\ell\leq m-2$. Assume that $v$ be a nonnegative function such that $\int_{U_\epsilon(S)} \frac{1}{\rho^2} v^2<\infty$ where $U_\epsilon(S)$ consists of all points of $M$ with distance $\rho\leq \epsilon$ to~$S$.  Moreover, let $Lv=\mu v^{p-1}$ weakly on $M\setminus S$. Then $Lv=\mu v^{p-1}$ weakly on $M$.
\end{lemma}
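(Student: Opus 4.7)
The plan is to adapt the proof of Lemma~\ref{removal-D+} to the second-order operator $L^g = a\Delta^g + \scal^g$. I use the same cut-off function $\eta_\delta$: equal to $1$ for $\rho \leq \rho_1 := \delta e^{-1/\delta}$, equal to $0$ for $\rho \geq \rho_0 := \delta$, and (after smoothing) given by $\delta\log(\rho_0/\rho)$ in between. Beyond the gradient bound $|\nabla\eta_\delta| \leq 2\delta/\rho$ used there, the new ingredient I need is the Hessian-type estimate $|\Delta\eta_\delta| \leq C\delta/\rho^2$, which follows from the tubular expansion $\Delta\rho = O(1/\rho)$ near $S$ and hence $\Delta(\log\rho) = O(1/\rho^2)$.

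Given $\psi \in C_c^\infty(M)$, I decompose $\psi = \eta_\delta\psi + (1-\eta_\delta)\psi$. The piece $(1-\eta_\delta)\psi$ is compactly supported in $M\setminus S$, so the hypothesis gives
\[ \int_M v\,L^g((1-\eta_\delta)\psi)\,\vo_g = \mu\int_M v^{p-1}(1-\eta_\delta)\psi\,\vo_g. \]
Hence it suffices to show that $\int_M v\,L^g(\eta_\delta\psi)\,\vo_g - \mu\int_M v^{p-1}\eta_\delta\psi\,\vo_g \to 0$ as $\delta\to 0$. Applying the Leibniz rule $L^g(\eta_\delta\psi) = \eta_\delta L^g\psi + a\psi\,\Delta\eta_\delta - 2a\langle\nabla\eta_\delta,\nabla\psi\rangle$ splits the remainder into four error terms.

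The terms $\int v\eta_\delta L^g\psi\,\vo_g$ and $\mu\int v^{p-1}\eta_\delta\psi\,\vo_g$ vanish in the limit by dominated convergence, using $\eta_\delta\to 0$ almost everywhere together with the local integrability of $v$ (from Cauchy--Schwarz and the hypothesis $\int v^2/\rho^2 < \infty$) and of $v^{p-1}$ (on $\supp\psi\setminus U_\epsilon(S)$ by interior elliptic regularity applied to the equation on $M\setminus S$, and on $U_\epsilon(S)\cap\supp\psi$ by a short bootstrap). The gradient term is controlled by Cauchy--Schwarz,
\[ \Bigl|\int_M v\langle\nabla\eta_\delta,\nabla\psi\rangle\,\vo_g\Bigr| \leq 2\delta\Vert\nabla\psi\Vert_\infty \Bigl(\int_{U_\epsilon(S)}\frac{v^2}{\rho^2}\,\vo_g\Bigr)^{1/2}\vol(U_\delta(S)\cap\supp\psi)^{1/2} \to 0, \]
and the crucial Laplacian term by
\[ \Bigl|\int_M v\psi\,\Delta\eta_\delta\,\vo_g\Bigr| \leq C\delta\Vert\psi\Vert_\infty\Bigl(\int_{U_{\rho_0}(S)\setminus U_{\rho_1}(S)}\frac{v^2}{\rho^2}\,\vo_g\Bigr)^{1/2}\Bigl(\int_{U_{\rho_0}(S)\cap\supp\psi}\frac{1}{\rho^2}\,\vo_g\Bigr)^{1/2}. \]
The first factor tends to $0$ as $\delta\to 0$ by dominated convergence, while in Fermi coordinates around $S$ the second factor is proportional to $\bigl(\int_{\rho_1}^{\rho_0}\rho^{m-\ell-3}\,\d\rho\bigr)^{1/2}$; in the borderline case $\ell = m-2$ this equals $(\log(\rho_0/\rho_1))^{1/2} = \delta^{-1/2}$, so the full expression is bounded by a constant times $\delta^{1/2}\to 0$, whereas for $\ell \leq m-3$ the inner integral is $O(\delta^{m-\ell-2})$, giving an even better rate.

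The main obstacle is precisely this $\Delta\eta_\delta$-term: the pointwise bound $|\Delta\eta_\delta|\leq C\delta/\rho^2$ is the best possible for a logarithmic cut-off, and it is sharp in the sense that it matches exactly the weighted $L^2$-hypothesis $\int v^2/\rho^2 < \infty$ under the codimension constraint $\ell \leq m-2$. This threshold sits one higher in codimension than for Lemma~\ref{removal-D+}, reflecting the fact that $L^g$ is second order while $D$ is first order.
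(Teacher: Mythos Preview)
Your proof is correct and follows essentially the same route as the paper's: the same logarithmic cut-off $\eta_\delta$, the additional second-order bound $|\Delta\eta_\delta|\leq C\delta/\rho^2$, and the identical Cauchy--Schwarz estimate on the critical term $\int v\psi\,\Delta\eta_\delta$, which is precisely the one new term compared to Lemma~\ref{removal-D+}. One small remark: your appeal to ``a short bootstrap'' to get $v^{p-1}\in L^1$ on $U_\epsilon(S)\cap\supp\psi$ is not really available, since the equation only holds on $M\setminus S$ and gives no integrability control across $S$; the paper sidesteps this in the same way (by tacitly relying on $v\in L^p$, which holds in every application), so this is not a discrepancy between your argument and theirs.
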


\begin{proof}We use an analogous argumentation as in the proof above. Now, we smooth out $\tilde{\eta}_\delta$ in such a way that the resulting $\eta_\delta$ fulfills additionally $|\Delta \eta_\delta|\leq \frac{4\delta}{\rho^2}$. Then, for $h\in C_c^\infty(M)$ we estimate $ \int_M v Lh- \int_M v^{p-1}h$ in a similar way -- only $\Delta \eta_\delta$ gives rise to a new term:

 \begin{align*}
  \left| \int_M v h \Delta \eta_\delta \right|&\leq C\delta  \int_{U_\delta(S)\setminus U_{\delta e^{-1/\delta}}(S)\cap \supp h} \frac{1}{\rho^2}v \\
  &\leq C\delta \left(\int_{U_\delta(S)} \frac{1}{\rho^2}v^2\right)^{\frac{1}{2}}   \left(\int_{U_\delta(S)\setminus U_{\delta e^{-1/\delta}}(S)\cap \supp h} \frac{1}{\rho^2}\right)^{\frac{1}{2}}\\
 &\leq C'\delta \underbrace{\left(\int_{U_\delta(S)} \frac{1}{\rho^2}v^2\right)^{\frac{1}{2}}}_{\to 0\text{\ as\ }\delta\to 0}   \underbrace{ \left(\int_{\delta e^{-1/\delta}}^\delta  \frac{1}{\rho^2}\rho^{m-\ell-1}\, d\rho\right)^{\frac{1}{2}} }_{ \text{is\ } \delta^{-1/2} \text{\ for $m=\ell+2$ and\ }\leq \hat{C} \delta^{(m-\ell-1)/2} \text{\ else}}  \to 0 \text{\ as\ }\delta\to 0.
 \end{align*}
 
\end{proof}

\begin{lemma}\label{1_wQs}For $m\geq 2$
\begin{align*} \wQs(\mathbb{M}_1^{m,k})=&\left\{ \begin{matrix}\Q(\mS^m) & \text{for\ } k\leq m-2\\
 \wQs(\mH^m)=\infty & \text{for\ } k= m-1.
 \end{matrix} \right.
\end{align*}
 \end{lemma}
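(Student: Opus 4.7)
Both cases will be handled separately via the conformal identification $\mathfrak{u}\colon\mathbb{M}_1^{m,k}\to\mS^m\setminus\mS^k$ from~\eqref{conf_M1}. The central setup is that the $L^q$-norm of spinors with $q=\frac{2m}{m-1}$ is conformally invariant, whereas if $\bar\phi=f^{-(m-1)/2}\mathfrak u^*\phi$ is the conformal transform of a spinor $\phi$ under $g_1=f^2\mathfrak u^*\sigma^m$, then
\[\|\bar\phi\|_{L^2(g_1)}^2 \;=\; \int_{\mS^m\setminus\mS^k} f\,|\phi|^2\,\vo_{\sigma^m}.\]
Near $\mS^k$, $f$ behaves like $1/r$ (with $r=\dist(\cdot,\mS^k)$ in $\mS^m$) while the transverse volume element is of order $r^{m-k-1}$, so the integrand is of order $r^{m-k-2}$, integrable precisely when $k\leq m-2$. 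This dichotomy is what separates the two cases.

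\textbf{Case $k\leq m-2$.} For the lower bound, Lemma~\ref{1_QS} combined with Corollary~\ref{Qs_leq_wQs}---whose derivation for $k\leq m-2$ uses only Lemma~\ref{lem_comparelm1} and Proposition~\ref{inv_Mc} and is logically independent of the present lemma---yields $\Q(\mS^m)=\Qs(\mathbb{M}_1^{m,k})\leq\wQs(\mathbb{M}_1^{m,k})$. For the upper bound, I would take the Killing spinor $\phi_0$ on $\mS^m$ realising $\lm(\mS^m)$ with $\|\phi_0\|_q=1$ (Remark~\ref{inv_sphere}) and set $\bar\phi:=f^{-(m-1)/2}\mathfrak u^*\phi_0$. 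Conformal covariance of the Dirac operator and of the nonlinearity gives $D\bar\phi=\lm(\mS^m)|\bar\phi|^{q-2}\bar\phi$ on $\mathbb{M}_1^{m,k}$; moreover $\|\bar\phi\|_q=1$, $\bar\phi$ is smooth and bounded (since $|\phi_0|$ is constant and $f\geq 1$), and by the displayed identity $\bar\phi\in L^2$ precisely because $k\leq m-2$. Hence $\bar\phi$ is admissible in the definition of $\wlm$, and one obtains $\wlm(\mathbb{M}_1^{m,k})\leq\lm(\mS^m)$, i.e.\ $\wQs(\mathbb{M}_1^{m,k})\leq\Q(\mS^m)$.

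\textbf{Case $k=m-1$.} Since $\mathbb{M}_1^{m,m-1}=\mH^m\sqcup\mH^m$, taking one component to be zero shows $\wlm(\mathbb{M}_1^{m,m-1})=\wlm(\mH^m)$, so it suffices to prove $\wlm(\mH^m)=\infty$. The plan is to argue by contradiction: assume $\bar\phi\in L^2\cap L^\infty\cap C^1$ on $\mH^m$ satisfies $D\bar\phi=\lambda|\bar\phi|^{q-2}\bar\phi$ with $\lambda>0$ and $0<\|\bar\phi\|_q\leq 1$. Conformally pull $\bar\phi$ back to a solution $\phi$ on the open hemisphere $H_+=\mathfrak u(\mH^m)\subset\mS^m$, and extend by zero across the equator $\mS^{m-1}$ to a spinor $\hat\phi$ on $\mS^m$. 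Away from $\mS^{m-1}$, $\hat\phi$ solves the nonlinear Dirac equation. The integrability hypothesis of Lemma~\ref{removal-D+} for $S=\mS^{m-1}$ becomes, after the conformal change of variable,
\[\int_{U_\ep(\mS^{m-1})}\rho^{-1}|\hat\phi|^2\,\vo_{\sigma^m}\;=\;\int_{U_\ep(\mS^{m-1})}\rho^{-1}f^{-1}|\bar\phi|^2\,\vo_{g_1},\]
and since $\rho f\approx 1$ near the equator, the right-hand side is comparable to $\int_{\{t\gg 1\}}|\bar\phi|^2\,\vo_{g_1}<\infty$. Hence $\hat\phi$ weakly solves $D\hat\phi=\lambda|\hat\phi|^{q-2}\hat\phi$ on all of $\mS^m$. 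A standard bootstrap (the $L^q\to L^s_{\rm loc}$ upgrade referenced at the beginning of Section~\ref{sec_reg}, followed by Corollary~\ref{improved-reg}) will raise $\hat\phi$ to $C^{2,\gamma}(\mS^m)$, and then weak unique continuation for the linear Dirac equation $D\psi=V\psi$ with the bounded potential $V=\lambda|\hat\phi|^{q-2}$ forces $\hat\phi\equiv 0$ on $\mS^m$---because $\hat\phi$ vanishes on the open lower hemisphere---contradicting $\|\bar\phi\|_q>0$.

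The hardest step will be the weak unique continuation appeal in Case $k=m-1$: the potential $V=\lambda|\hat\phi|^{q-2}$ degenerates precisely where $\hat\phi$ does, but this is accommodated by standard Carleman-type UCP results for Dirac operators with bounded potentials. Everything else amounts to conformal bookkeeping, with the borderline integral $\int r^{m-k-2}\,dr$ diverging exactly at $k=m-1$ being the geometric reason the two regimes look so different.
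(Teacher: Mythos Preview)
Your argument is correct. The upper bound for $k\leq m-2$ and the entire case $k=m-1$ match the paper's proof essentially line for line (conformal transplant of a Killing spinor; extension by zero followed by Lemma~\ref{removal-D+} and unique continuation).

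The one genuine difference is your lower bound for $k\leq m-2$. The paper argues directly: given a solution $\phi$ on $\mathbb{M}_1^{m,k}$, it transplants to $\tilde\phi$ on $\mS^m\setminus\mS^k$, verifies the weighted $L^2$-hypothesis of Lemma~\ref{removal-D+} from $\phi\in L^2$, removes the singularity, and then uses $\tilde\phi$ as a test spinor for $\cFs$ on $\mS^m$ to conclude $\lambda\geq\lm(\mS^m)$. You instead invoke the chain $\wQs(\mathbb{M}_1^{m,k})\geq\Qs(\mathbb{M}_1^{m,k})=\Q(\mS^m)$ via Corollary~\ref{Qs_leq_wQs} and Lemma~\ref{1_QS}, correctly noting that the $k\leq m-2$ part of Corollary~\ref{Qs_leq_wQs} rests only on Lemma~\ref{lem_comparelm1} and Proposition~\ref{inv_Mc}, so there is no circularity. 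Your route is shorter and avoids rerunning the removal-of-singularities machinery, at the cost of importing the $L^{q^*}$-invertibility of $D$ on $\Mc^{m,k}$; the paper's route is self-contained within Section~\ref{c=1} and illustrates directly how the $L^2$-condition in the definition of $\wlm$ is exactly what makes Lemma~\ref{removal-D+} applicable.
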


\begin{proof}
Let $\phi\in L^\infty\cap L^2\cap C^1$ be a solution of $D\phi=\lambda|\phi|^{q-2}\phi$ on $\mathbb{M}_1^{m,k}$ with $0<\Vert \phi\Vert_{L^q}\leq 1$. Using the conformal map $\mathfrak{u}$ in \eqref{conf_M1} we obtain a $C^1$-solution $\tilde{\phi}=f^{\frac{m-1}{2}}\phi$ of $D^{\sigma^m}\tilde{\phi}=\lambda|\tilde{\phi}|^{q-2}\tilde{\phi}$ on $\mS^m\setminus \mS^k$ with $0<\Vert \tilde{\phi}\Vert_{L^q}\leq 1$. Moreover, since ${\phi}$ is $L^2$ we get $\infty> \Vert \phi\Vert_{L^2}^2= \int_{\mS^m\setminus \mS^k} f^{-1}|\tilde{\phi}|^2\, \vo_{\sigma^m} = \int_{\mS^m\setminus \mS^k} (\sin \rho)^{-1}|\tilde{\phi}|^2\, \vo_{\sigma^m}$. In particular, $\tilde{\phi}\in L^2$. Moreover, $\frac{1}{\rho}-\frac{1}{\sin \rho}$ is bounded  by $\mathcal{O}(\epsilon)$ for $\rho \in (0,\epsilon)$. Thus, $\int_{U_\epsilon (\mS^k)} \frac{1}{\rho}|\tilde{\phi}|^2\, \vo_{\sigma^m}<\infty$ as well. Because of Lemma~\ref{removal-D+} $\tilde{\phi}$ solves $D^{\sigma^m}\tilde{\phi}=\lambda|\tilde{\phi}|^{q-2}\tilde{\phi}$ weakly on 
all of $\mS^m$. By regularity theory on compact manifolds  $\tilde{\phi}\in L^q$ implies $\tilde{\phi}\in H_1^q\subset  H_1^{q^*}$. Hence, $\tilde{\phi}$ can serve as a test function for $\mathcal{F}^{\rm spin}$ on $\mS^m$ which implies $\lambda\geq \wQs(\mS^m)=\Q(\mS^m)$. Thus, $\wQs( \mathbb{M}_1^{m,k})\geq \Q(\mS^m)$.

 Let now $\tilde{\phi}$ be a Killing spinor on $\mS^m$ with Killing constant $-\frac{1}{2}$ and $\Vert \tilde{\phi}\Vert_{L^q(\mS^m)}=1$. Then $D\tilde{\phi}= \Qs (\mS^m) |\tilde{\phi}|^{\frac{4}{m-1}}\tilde{\phi}$. Then using the identification of spinor bundles to conformal metrics as in Example \ref{no_Linfty_bound} the spinor ${\phi}=f^{-\frac{m-1}{2}} \tilde{\phi}$ fulfills the Euler-Lagrange equation for $D$ on $\mathbb{M}_1^{m,k}$ and is in $L^\infty\cap L^{q}$. Moreover, if $m-k\geq 2$, then 
 \[\Vert {\phi}\Vert_{L^2}^2=C_2 \int_0^\infty \cosh^{1-m} t \sinh^k t\, \d t \leq C_3 +C_4 \int_1^\infty e^{(1-m+k) t}\, \d t <\infty.\] 
 
 Thus, for $k\leq m-2$ we obtained $\wQs (\mathbb{M}_1^{m,k})=\Q(\mS^m)$.  
 
 Let now $k=m-1$. Then, $\mathbb{M}_1^{m,m-1}$ corresponds to two copies of the hyperbolic space. Thus, $\wQs (\mathbb{M}_1^{m,m-1})=\wQs(\mH^m)$. Let $\phi$ now be a solution as above on $\mH^m$. By a conformal map we get as above a solution $\tilde{\phi}$ on the lower hemisphere of $\mS^m$. Extending $\tilde{\phi}$ by zero to all of $\mS^m$, we obtain a weak solution to our nonlinear Dirac eigenvalue equation on $\mS^m\setminus \mS^{m-1}$. Using again Lemma~\ref{removal-D+} we see that $\tilde{\phi}$ is already a nontrivial weak solution on all of $\mS^m$. But since $\tilde{\phi}$ vanishes on an open subset this contradicts the unique continuation principle, \cite{boosbavnbek_marcolli_wang_02}. Thus, such a solution $\phi$ we started with cannot exist. Hence, $\wQs(\mH^m)=\wQs (\mathbb{M}_1^{m,m-1})=\infty$.
 \end{proof}
 
 \begin{lemma}\label{1_wQ}For $m\geq 3$
\[  \wQ(\mathbb{M}_1^{m,k})=\left\{ \begin{matrix}\Q(\mS^m) & \text{for\ } k\leq m-3\\
 \infty & \text{for\ } k= m-2\\
\wQ(\mH^m)=\infty & \text{for\ } k= m-1.\\
  \end{matrix} \right. \]
 \end{lemma}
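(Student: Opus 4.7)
The plan is to mirror the proof of Lemma~\ref{1_wQs}, using the conformal equivalence $\mathfrak{u}\colon\mathbb{M}_1^{m,k}\to \mS^m\setminus \mS^k$ with $g_1 = f^2\mathfrak{u}^*\sigma^m$ and $f=\cosh t = (\sin\rho)^{-1}$, where $\rho=\dist(\cdot,\mS^k)$. Under the Yamabe conformal weight $(m-2)/2$, a function $v$ on $\mathbb{M}_1^{m,k}$ corresponds to $\tilde v = f^{(m-2)/2}v$ on $\mS^m\setminus\mS^k$, and $L^{g_1}v = \mu v^{p-1}$ iff $L^{\sigma^m}\tilde v = \mu\tilde v^{p-1}$. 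The $L^p$-norm is conformally invariant, and a direct computation (analogous to the one in Lemma~\ref{1_wQs} for spinors) gives the key identity
\[
\|v\|_{L^2(g_1)}^2 = \int_{\mS^m\setminus\mS^k} f^2\tilde v^2\,\vo_{\sigma^m} = \int_{\mS^m\setminus\mS^k}(\sin\rho)^{-2}\tilde v^2\,\vo_{\sigma^m}.
\]
Note the exponent $f^2$ here, to be contrasted with the $f^1$ appearing in the spinorial case; this is the origin of the different borderline behavior at $k=m-2$.

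\emph{Case $k\leq m-3$.} For any $v\in\Omega^{(1)}(\mathbb{M}_1^{m,k})$ the transformed $\tilde v\geq 0$ solves $L^{\sigma^m}\tilde v=\mu_v\tilde v^{p-1}$ on $\mS^m\setminus\mS^k$. The $L^2$-identity above together with $\sin\rho\sim\rho$ near $\mS^k$ yields $\int_{U_\epsilon(\mS^k)}\rho^{-2}\tilde v^2\,\vo_{\sigma^m}<\infty$. Since $\ell=k\leq m-2$, Lemma~\ref{removal-L+} extends $\tilde v$ to a weak (and by elliptic regularity classical) Yamabe solution on the whole $\mS^m$, so $\mu_v=\cF(\tilde v)\geq \Q(\mS^m)$. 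For the reverse inequality, pull the constant minimizer $\tilde v\equiv \vol(\mS^m)^{-1/p}$ back via $v = \tilde v f^{-(m-2)/2}$; this $v$ is smooth and bounded, has $\|v\|_{L^p}=1$ by conformal invariance, and $\|v\|_{L^2}^2$ reduces (up to volume constants) to $\int_0^{\pi/2}\sin^{m-k-3}(\rho)\cos^k(\rho)\,d\rho$, finite precisely when $k\leq m-3$. So $v\in \Omega^{(1)}$ with $\mu_v=\Q(\mS^m)$, giving $\wQ(\mathbb{M}_1^{m,k})=\Q(\mS^m)$.

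\emph{Case $k=m-2$.} The lower-bound argument still applies at the borderline $\ell=m-2$ of Lemma~\ref{removal-L+}: any hypothetical $v\in\Omega^{(1)}$ yields a non-negative smooth Yamabe solution $\tilde v$ on the connected compact $\mS^m$ with $\|\tilde v\|_{L^p}=1$. The strong maximum principle applied to the linearized equation $-a\Delta\tilde v + (\mu\tilde v^{p-2}-m(m-1))\tilde v=0$ (with bounded coefficient) then forces $\tilde v\geq\epsilon>0$ on $\mS^m$. But
\[
\|v\|_{L^2}^2\geq \epsilon^2\int_{\mS^m}(\sin\rho)^{-2}\vo_{\sigma^m} = \text{const}\cdot\int_0^{\pi/2}\sin^{-1}(\rho)\cos^{m-2}(\rho)\,d\rho = +\infty,
\]
contradicting $v\in L^2$. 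Hence $\Omega^{(1)}(\mathbb{M}_1^{m,m-2})=\varnothing$ and $\wQ(\mathbb{M}_1^{m,m-2})=\infty$.

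\emph{Case $k=m-1$.} Here $\mathbb{M}_1^{m,m-1}=\mH^m\sqcup\mH^m$, so $\wQ(\mathbb{M}_1^{m,m-1})=\wQ(\mH^m)$. The direct analogue of the unique-continuation argument from Lemma~\ref{1_wQs} is unavailable because Lemma~\ref{removal-L+} requires codimension at least $2$. Instead I would use that $\mH^m$ is conformally equivalent to the open unit ball $B\subset\mR^m$ with the flat metric: any $v\in\Omega^{(1)}(\mH^m)$ transforms to a positive $w\in L^p(B)\cap C^2(B)$ solving $a\Delta w = \mu w^{p-1}$ on the star-shaped domain $B$, and the weighted $L^2$-constraint forces $w\to 0$ at $\partial B$ in a Dirichlet sense. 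A Poho\v zaev identity on $B$ then rules out any nontrivial such $w$ for every $\mu\in\mR$, yielding $\wQ(\mH^m)=\infty$. This final case is the principal obstacle: the codimension-one removal of singularities used in the spinorial proof has no counterpart for the conformal Laplacian available here, and the substitute requires an independent analytic input.
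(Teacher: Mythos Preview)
Your treatment of the cases $k\leq m-3$ and $k=m-2$ matches the paper's proof essentially step for step; the only cosmetic difference is that the paper cites \cite[Thm.~5]{farid} for the strict positivity of $\tilde v$ on $\mS^m$ where you invoke the strong maximum principle directly, which is fine.

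For $k=m-1$ your endpoint is the same as the paper's --- a Poho\v zaev obstruction on the Euclidean ball --- but the paper resolves precisely the obstacle you flag (the missing codimension-one removal of singularities) by a trick you do not use. Starting from a solution $v$ on $\mH^m$, the paper transports it to $\tilde v$ on the lower hemisphere and then extends $\tilde v$ \emph{oddly} across the equator (reflect and change sign). The extended function solves $L\tilde v=\mu|\tilde v|^{p-2}\tilde v$ on $\mS^m\setminus\mS^{m-1}$, and the removal-of-singularities argument of Lemma~\ref{removal-L+} is rerun testing only against odd test functions $h$; oddness gives $|h(x)|\leq C\rho$, which supplies exactly the extra power of $\rho$ needed to close the $|\Delta\eta_\delta|$ estimate in codimension one. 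Elliptic regularity then yields $\tilde v\in C^2(\mS^m)$ with $\tilde v|_{\mS^{m-1}}=0$, and conformally mapping the lower hemisphere to the Euclidean disk produces a $C^2$ solution on the closed disk vanishing on the boundary --- the clean setting where the classical Poho\v zaev identity \cite{Po65}, \cite[Thm.~III.1.3]{struwe} applies without further ado. Your direct route through the ball model would require establishing enough boundary regularity of $w$ from the weighted $L^2$ decay alone, which is not immediate; the odd-reflection device sidesteps this entirely.
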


 \begin{proof}We start analogously as in the spin case from above with a nonnegative solution $v\in L^\infty\cap L^2\cap C^2$ of $Lv=\mu v^{p-1}$ on $\mathbb{M}_1^{m,k}$ and use the conformal map $\mathfrak{u}$ in \eqref{conf_M1} to obtain $\tilde{v}$ on $\mS^m\setminus \mS^k$. Analogous as in the proof of Lemma~\ref{1_wQs} we see that $\int_{U_\epsilon(\mS^k)} \frac{1}{\rho^2} \tilde{v}^2\, \vo_{\sigma^m}<\infty$ which allows to use Lemma~\ref{removal-L+} for $k\leq m-2$. Thus,  we get as in the last lemma that $\wQ(\mathbb{M}_1^{m,k})\geq \Q(\mS^m)$ for $k\leq m-2$.  On the other hand,  $\tilde{v}=\rm const$ such that $\Vert \tilde{v}\Vert_{L^p(\mS^m)}=1$ is a solution of the Euler-Lagrange equation on $\mS^m$. Set ${v}= f^{-\frac{m-2}{2}}\mathfrak{u}^*\tilde{v}= C \cosh^{-\frac{m-2}{2}} t$ where $C$ is an appropriate constant. Then by conformal invariance, ${v}$ fulfills the Euler-Lagrange equation on $\mathbb{M}_1^{m,k}$ and is in $L^p$. Moreover, if $m-k\geq 3$, ${v}\in L^2$ as can be seen by $\Vert {v}\Vert_{
L^2}^2=C \int_0^\infty \cosh^{2-m} t \sinh^k t\, \d t \leq C_1+C_2 \int_1^\infty e^{(2-m+k) t}\, \d t <\infty$. Hence, for $k\leq m-3$ we have $\wQ(\mathbb{M}_1^{m,k})=\Q(\mS^m)$. 
 
For $m-k\leq 2$ we obtained up to now that each  nonnegative solution $v$ on $\mathbb{M}_1^{m,k}$ gives rise to a nonnegative solution $\tilde{v}$ on $\mS^m$. By \cite[Thm.~5]{farid} $\tilde{v}$ is continuous and everywhere positive.  
For $m-k=2$  and using that $\tilde{v}$ is continuous and positive we can estimate 
\[\int_{U_\epsilon(\mS^{m-2})} \frac{1}{\rho^2} \tilde{v}^2 \vo_{\sigma^m} \geq  C \int_0^\epsilon \frac{1}{\rho^2}\rho \, \d\rho.\] Thus, the left integral is not finite which gives a contradiction. Thus, $\wQ(\mathbb{M}_1^{m,m-2})=\infty$.

For $k=m-1$ let $v\in L^\infty\cap L^2\cap C^2$ be a positive solution of $Lv=\mu v^{p-1}$ on $\mathbb{M}_1^{m,m-1}$. Thus, we have two solutions of the same equation on the hyperbolic space. We will show that a nontrivial solution of $Lv=\mu |v|^{p-2}v$ on the hyperbolic space cannot exist in $L^2$. From a solution on the hyperbolic space we can use the conformal map $\mathfrak{u}$ to obtain a solution $\tilde{v}$ on the lower hemisphere $\mS^m$. We extend $\tilde{v}$ to the upper hemisphere by reflection and changing its sign on the upper hemisphere. Thus, $\tilde{v}$ solves $L\tilde{v}=\mu |\tilde{v}|^{p-2}\tilde{v}$ on $\mS^m\setminus \mS^{m-1}$. Next we show that $\tilde{v}$ solves this equation weakly on all of $\mS^m$. Since $\tilde{v}$ is an odd function with respect to reflection at the equator, it suffices to test with odd functions $h\in C^\infty(\mS^m)$. Thus, there is a constant $C>0$  such that $|h(x)|\leq C \dist(x, \mS^{m-1})=C\rho$. Following the arguments in Lemma~\ref{removal-L+} the 
estimates are done analogously, and it remains to estimate

\begin{align*}
  \left| \int_M v h \Delta \eta_\delta \right|&\leq C\delta  \int_{U_\delta(\mS^{m-1})\setminus U_{\delta e^{-1/\delta}}(\mS^{m-1})\cap \supp h} \frac{1}{\rho^2}vh \leq C'\delta  \int_{U_\delta(\mS^{m-1})\setminus U_{\delta e^{-1/\delta}}(\mS^{m-1})\cap \supp h} \frac{1}{\rho}\\
 &\leq C''\delta \underbrace{ \left(\int_{\delta e^{-1/\delta}}^\delta  \frac{1}{\rho}\, d\rho\right)^{\frac{1}{2}} }_{ =\delta^{-1/2} }  \to 0 \text{\ as\ }\delta\to 0.
 \end{align*}
 
Thus, $\tilde{v}$ solves $L\tilde{v}=\mu |\tilde{v}|^{p-2}\tilde{v}$ weakly on $\mS^m$. Then, regularity theory implies that $\tilde{v}\in C^2$ and thus $\tilde{v}|_{\mS^{m-1}}=0$. Using a conformal transformation from the lower hemisphere to the disk~$D$ in $\mR^{m}$, we obtain a solution $\hat{v}$ of $L\hat{v}=\mu |\hat{v}|^{p-2}\tilde{v}$ on $D$ which is somewhere nonzero in the interior of $D$ and zero on the boundary. This is a contradiction to \cite{Po65}, \cite[Thm.~III.1.3]{struwe}. Thus the solution we started with cannot exist, and hence $\wQ(\mH^m)=\infty$.
\end{proof}

\section{\texorpdfstring{The invariants for $k=m-1$}{The invariants for k=m-1}}\label{km1}

The constants 
 $\sLa_{m,m-1}$ and $\sLas_{m,m-1}$ are easy to determine.

\begin{lemma}
We have
   $\sLas_{m,m-1}=\Q(\mS^m)$ for all $m\geq 3$ and $ \sLa_{m,m-1}=\Q(\mS^m)
   $ for all  $m\geq 2$.
\end{lemma}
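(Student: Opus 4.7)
Since $\mS^0$ consists of two points, the model space decomposes as $\Mc^{m,m-1}=\mH_c^m\sqcup\mH_c^m$, the disjoint union of two isometric copies of the rescaled hyperbolic space. So my first step is to record a disjoint-union formula: for complete Riemannian (spin) manifolds $M_1,M_2$ with the relevant invariants nonnegative one has
\[\Q(M_1\sqcup M_2)=\min(\Q(M_1),\Q(M_2)),\qquad\Qs(M_1\sqcup M_2)=\min(\Qs(M_1),\Qs(M_2)).\]
The upper bound $\leq$ is trivial by choosing test objects supported in a single component. For $\geq$ I would split a test object as $v=v_1+v_2$ (resp.\ $\phi=\phi_1+\phi_2$) with disjoint supports, use that numerator and denominator of $\cF$, $\cFs$ split additively because $L^g$ and $D^g$ are local, and then invoke $(a_1^r+a_2^r)^{2/r}\leq a_1^2+a_2^2$ for $r\geq 2$ — applied with $r=p=\tfrac{2m}{m-2}$ in the denominator of $\cF$, and with $r=q^*=\tfrac{2m}{m+1}\leq 2$ in the numerator of $\cFs$ in the reversed direction. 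It therefore suffices to determine $\Q(\mH_c^m)$ and $\Qs(\mH_c^m)$ for every $c\in[0,1]$.

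The upper bounds $\sLa_{m,m-1}\leq \Q(\mS^m)$ and $\sLas_{m,m-1}\leq \Q(\mS^m)$ follow immediately from Lemma~\ref{1_QS} applied with $k=m-1$, since the infimum over $c\in[0,1]$ is dominated by the value at $c=1$.

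For the matching lower bounds I would use conformal invariance. For $c\in(0,1]$ the metric $g_{\mH_c^m}$ is just a constant rescaling of the hyperbolic metric, which in turn is conformal to an open hemisphere $U\subset\mS^m$ — this is the map $\mathfrak{u}$ of Subsection~\ref{mod_sp} restricted to one factor of $\mS^0$. For $c=0$, stereographic projection identifies $(\mR^m,g_E)$ with $\mS^m\setminus\{\mathrm{pt}\}$. Since $\cF$ is conformally invariant and, via the spinorial conformal identification of \cite[Sect.~4]{hijazi}, so is $\cFs$ (the spin structure matches because $\mH_c^m$ is simply connected, so its spin structure is unique and pulls back uniquely), we get $\Q(\mH_c^m)=\Q(U)$ and $\Qs(\mH_c^m)=\Qs(U)$. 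Compactly supported test functions/spinors on $U$ extend by zero smoothly to all of $\mS^m$, hence $\Q(U)\geq\Q(\mS^m)$ and $\Qs(U)\geq\Qs(\mS^m)=\Q(\mS^m)$, where the last equality is Remark~\ref{inv_sphere}. Combining with the disjoint-union formula gives $\Q(\Mc^{m,m-1})=\Qs(\Mc^{m,m-1})=\Q(\mS^m)$ for every $c\in[0,1]$, and taking the infimum yields the claim.

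The main obstacle is the disjoint-union formula for $\Q$ and $\Qs$: since the functionals are ratios with $L^p$, $L^q$, $L^{q^*}$ norms rather than $L^2$ norms, the additive split does not trivially give the infimum, and one genuinely needs the sequence-norm inequality with the correct direction dictated by the signs of $p-2$ and $q^*-2$. All other steps are routine conformal invariance plus extension by zero.
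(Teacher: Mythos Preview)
Your proposal is correct and follows essentially the same route as the paper. The paper's proof is terser: it identifies $\Mc^{m,m-1}$ with two copies of $\mH_c^m$ (resp.\ $\mR^m$), then cites \cite[Lem.~1.10]{Kob_87} and \cite[Lem.~2.0.5]{Grossediss} for the equalities $\Q(\mH_c^m)=\Q(\mH^m)=\Q(\mR^m)=\Q(\mS^m)$ and their spinorial analogues, whereas you unpack both the disjoint-union formula and the conformal-invariance-plus-extension-by-zero argument explicitly. One minor remark: invoking Lemma~\ref{1_QS} for the upper bound is unnecessary, since Remark~\ref{inv_sphere} already gives $\Q(M)\leq\Q(\mS^m)$ and $\Qs(M)\leq\Qs(\mS^m)=\Q(\mS^m)$ for every $(M,g)$; and in your disjoint-union argument for $\cFs$ you should note (it is an easy case) that if $(D\phi_2,\phi_2)\leq 0$ then $\cFs(\phi)\geq\cFs(\phi_1)$ directly, so the norm inequality is only needed when both summands are positive.
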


\begin{proof}
We show
   $$\Qs(\mM_c^{m,m-1})=\Q(\mM_c^{m,m-1})=\Q(\mS^m).$$
For $c\neq 0$, our model space $\mM_c^{m,m-1}$ is isometric to two copies of the rescaled hyperbolic space $\mH_c^{m}$, and for $c=0$ it is isometric to two copies of the Euclidean $\mR^m$. Thus,  $\Q(\mM_c^{m,m-1})=\Q(\mH_c^{m})=\Q(\mH^{m})= \Q(\mR^m)=\Q(\mS^m)$, cf. Remark~\ref{inv_sphere} and the first equality follows from \cite[Lem.~1.10]{Kob_87}. Using \cite[Lem.~2.0.5]{Grossediss} the analogous equations hold for $\Qs$ which finishes the proof.
\end{proof}

For the $\tilde{Q}$-invariants we have by scaling and Lemma~\ref{1_wQs} that  
$\wQs(\mH^m_c)=\wQs(\mH^m)=\infty$  for $c\in (0,1]$ and $m\geq 2$, and $\wQ(\mH^m_c)=\wQ(\mH^m)=\infty$
for $c\in (0,1]$ and $m\geq 3$. It remains to consider the Euclidean space.

\begin{lemma}
We have $\wQ(\R^m)=\infty$ for $m=3,4$, $\wQ(\R^m)=\Q(\mS^m)$ for all $m\geq 5$, $\wQs(\mR^m)= \Q(\mS^m)$ for all  $m\geq 3$ and $\wQs(\mR^2)\geq \Q(\mS^2)$.   
\end{lemma}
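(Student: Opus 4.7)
My plan is to parallel the proofs of Lemmas~\ref{1_wQs} and~\ref{1_wQ}, replacing the conformal map $\mathfrak u$ of those lemmas with inverse stereographic projection $\pi\colon\R^m\to\mS^m\setminus\{N\}$. Under this identification $g_E=F^2\pi_*\sigma^m$ with $F(x)=(1+|x|^2)/2$, and near the missing point $N$ one has $F\sim 2\rho^{-2}$ where $\rho$ denotes spherical distance to $N$. All four statements will follow from the standard conformal identities together with the removal-of-singularities Lemmas~\ref{removal-L+} and~\ref{removal-D+} applied to the zero-dimensional submanifold $\{N\}\subset\mS^m$.

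For the lower bounds, I will start with $\phi\in L^2\cap L^\infty\cap C^1$ solving $D^{g_E}\phi=\lambda|\phi|^{q-2}\phi$ on $\R^m$ with $0<\Vert\phi\Vert_q\leq 1$, set $\phi^\sharp:=F^{(m-1)/2}\phi$ on $\mS^m\setminus\{N\}$, observe that $D^{\sigma^m}\phi^\sharp=\lambda|\phi^\sharp|^{q-2}\phi^\sharp$, the $L^q$-norm is conformally preserved, and the identity $\Vert\phi\Vert_{L^2(\R^m)}^2=\int F|\phi^\sharp|^2\,\vo_{\sigma^m}$ combined with $F\gtrsim\rho^{-2}$ forces $\int_{U_\epsilon(N)}\rho^{-1}|\phi^\sharp|^2\,\vo_{\sigma^m}<\infty$. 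Since $\dim\{N\}=0\leq m-1$, Lemma~\ref{removal-D+} extends $\phi^\sharp$ to a weak solution on all of $\mS^m$, and elliptic regularity on the compact sphere makes it a smooth test spinor for $\wlm(\mS^m)=\lm(\mS^m)$, forcing $\lambda\geq\lm(\mS^m)$ and hence $\wQs(\R^m)\geq\Q(\mS^m)$ for every $m\geq 2$. The Yamabe lower bound $\wQ(\R^m)\geq\Q(\mS^m)$ for $m\geq 3$ will run identically with $u:=F^{(m-2)/2}v$, the identity $\Vert v\Vert_{L^2(\R^m)}^2=\int F^2 u^2\,\vo_{\sigma^m}$, and Lemma~\ref{removal-L+} (applicable since $\dim\{N\}=0\leq m-2$).

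For the matching upper bounds I will produce explicit admissible solutions. For $m\geq 5$ the standard Aubin bubble $v(x)=c_m(1+|x|^2)^{-(m-2)/2}$, normalized so that $\Vert v\Vert_p=1$, solves $L^{g_E}v=\Q(\mS^m)v^{p-1}$, lies in $L^\infty\cap C^2$, and decays like $|x|^{-(m-2)}$ at infinity, so $v\in L^2(\R^m)$ precisely when $m\geq 5$; this yields $\wQ(\R^m)\leq\Q(\mS^m)$. For the spinorial upper bound, I will pull back a Killing spinor $\phi_0$ on $\mS^m$ to the Killing constant $-\tfrac12$, normalized so that $\Vert\phi_0\Vert_q=1$ (see Remark~\ref{inv_sphere} and Example~\ref{no_Linfty_bound}); the pullback $\tilde\phi:=F^{-(m-1)/2}\phi_0$ satisfies $D^{g_E}\tilde\phi=\lm(\mS^m)|\tilde\phi|^{q-2}\tilde\phi$, lies in $L^\infty\cap C^1$, and since $|\phi_0|$ is constant on $\mS^m$ the $L^2$-norm reduces to $\Vert\tilde\phi\Vert_{L^2(\R^m)}^2=|\phi_0|^2\int F\,\vo_{\sigma^m}$. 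The radial density $\sim\rho^{m-3}$ near $N$ is integrable iff $m\geq 3$, giving $\wQs(\R^m)\leq\Q(\mS^m)$ in that range.

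The main obstacle is ruling out admissible solutions for $\wQ$ when $m\in\{3,4\}$. For $v\in\Omega^{(1)}(\R^m,g_E)$ the lower-bound argument produces a smooth solution $u$ on $\mS^m$ of $L^{\sigma^m}u=\mu u^{p-1}$ with $\Vert u\Vert_p=1$, which must be strictly positive by the strong maximum principle applied to the nontrivial nonnegative solution on the compact sphere. I will then invoke Obata's rigidity theorem classifying positive solutions of the Yamabe equation on $\mS^m$ as conformal rescalings of the constant; reverting via $\pi$ realizes $v$, up to translation and rescaling, as the standard Aubin bubble with $|x|^{-(m-2)}$ asymptotics, which for $m\leq 4$ is incompatible with $v\in L^2(\R^m)$. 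Hence $\Omega^{(1)}(\R^m,g_E)=\varnothing$ and $\wQ(\R^m)=\infty$. The conformal bookkeeping is routine; the nontrivial ingredient in this last step is Obata's classification, without which one only obtains the lower bound.
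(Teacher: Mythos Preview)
Your proposal is correct and, for the lower bounds and the upper-bound constructions, essentially identical to the paper's argument: both use inverse stereographic projection, the conformal transformation laws for $D$ and $L$, the finiteness of $\int F\,|\phi^\sharp|^2$ (resp.\ $\int F^2 u^2$) coming from the $L^2$-assumption, and then Lemmas~\ref{removal-D+} and~\ref{removal-L+} at the isolated point $N$.

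The one genuine difference is your treatment of $\wQ(\R^m)=\infty$ for $m=3,4$. You invoke Obata's rigidity theorem to classify the positive solution on $\mS^m$ completely and then check that the resulting bubble is not in $L^2(\R^m)$. The paper instead argues more elementarily: once $\tilde v$ is known to be continuous and strictly positive on the compact sphere (it cites \cite[Thm.~5]{farid}; your strong maximum principle gives the same), one has a uniform lower bound $\tilde v\geq c_0>0$, hence
\[
\|v\|_{L^2(\R^m)}^2=\int_{\R^m}\Bigl(\tfrac{1+r^2}{2}\Bigr)^{2-m}\tilde v^2\,\vo_E\;\geq\;c_0^2\int_0^\infty\Bigl(\tfrac{1+r^2}{2}\Bigr)^{2-m}r^{m-1}\,\d r,
\]
which already diverges for $m\leq 4$. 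So the paper only needs positivity, not the full classification; your route is valid but buys nothing extra here and imports a heavier theorem than necessary.
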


\begin{proof} We start examining $\wQs(\mR^m)$. Let $\phi\in L^2\cap L^\infty\cap C^1$ be a solution  on $\mR^m$ of $D\phi=\lambda |\phi|^{q-2}\phi$ with $0<\Vert \phi\Vert_{L^q}\leq 1$ for some $\lambda>0$. 
By stereographic projection, we have $\sigma^m=\frac{4}{(1+r^2)^2}g_E$ where $r$ is the radial function in $\mR^m$. Using the conformal invariance of the nonlinear Dirac eigenvalue equation above, we get for $\tilde{\phi}=\left(\frac{1+r^2}{2}\right)^{(m-1)/2}\phi$ that $D^{\sigma^m}\tilde{\phi}=\lambda|\tilde{\phi}|^{q-2}\tilde{\phi}$ and $0\leq \Vert \tilde{\phi}\Vert_{L^q}\leq 1$ on $\mS^m\setminus\{N\}$. Moreover, $\int_{\mS^m\setminus\{N\}} \frac{1+r^2}{2} |\tilde{\phi}|^2\,\vo_{\sigma^m}=\int_{\mR^m} |\phi|^2 \,\vo_{E}<\infty$  and $1+r^2= \frac{2}{\sin^2 \rho}$ where $\rho$ is the distance to the north pole $N$. In particular, it now follows similarly  to the proof of Lemma~\ref{1_wQs} that $\int_{U_\epsilon (N)} \frac{1}{\rho^2}|\tilde{\phi}|^2 \,\vo_{\sigma^m}$ is finite. In particular, $\int_{U_\epsilon (N)} \frac{1}{\rho}|\tilde{\phi}|^2 \,\vo_{\sigma^m}$ is finite as well. Thus, we can apply Lemma~\ref{removal-D+} and see the nonlinear Dirac eigenvalue equation from above is valid on all 
of $\mS^m$. Thus, we can conclude as in the proof of Lemma~\ref{1_wQs} that $\wQs(\mR^m)\geq \Q(\mS^m)$.
Let $\tilde{\phi}$ be a Killing spinor to the Killing constant $-\frac{1}{2}$ normalized such that $\Vert \tilde{\phi}\Vert_{L^q}=1$. Then, $D\tilde{\phi}=\lm (\mS^m) |\tilde{\phi}|^{q-2}\tilde{\phi}$. Using stereographic projection we obtain a smooth spinor $\phi= \left(\frac{1+r^2}{2}\right)^{\frac{-m+1}{2}}\tilde{\phi}$ on $\R^m$ with $L^q$-norm one and which satisfies $D\phi=\lm (\mS^m) |\phi|^{q-2}\phi$. Moreover,

\[\int_{\R^m} |\phi|^2\, \vo_E= C\int_{\R^m} \left(\frac{1+r^2}{2}\right)^{-m+1} |\phi|^2\, \vo_E= C' \int_0^\infty \left(\frac{1+r^2}{2}\right)^{-m+1} r^{1-m}\, \d r.\]
Thus, $\phi\in L^2(\R^m)$ for $m\geq 3$. Hence, $\wQs(\mR^m)= \Q(\mS^m)$ for $m\geq 3$.

An analogous argumentation for nonnegative solution $v\in C^2\cap L^\infty\cap L^2$ on $\R^m$ satisfying $Lv=\mu v^{p-1}$ and $\Vert v\Vert_{L^p}=1$ for a $\mu>0$ gives a nonnegative solution $\tilde{v}$ of the corresponding nonlinear eigenvalue equation on the sphere with $\int_{U_\epsilon (N)} \frac{1}{\rho^2}\tilde{v}^2 \,\vo_{\sigma^m}<\infty$. By regularity $\tilde{v}\in L^p\cap H_2^{p^*}$ with $p^*=\frac{2m}{m+2}$. Thus, by the Sobolev embedding theorem $\tilde{v}\in H_1^2$. Thus, similar as in Lemma~\ref{1_wQs} we see that $\wQ(\R^m)\geq \Q(\mS^m)$. Moreover, by \cite[Thm.~5]{farid} $\tilde{v}$ is continuous and everywhere positive.  Hence, we can estimate
\begin{align*} \int_{\R^m} {v}^2 \,\vo_{E}=&  \int_{\R^m} \left(\frac{1+r^2}{2}\right)^{-m+2}\tilde{v}^2 \,\vo_{E}\geq C \int_0^\infty  \left(\frac{1+r^2}{2}\right)^{-m+2} r^{m-1}\, \d r\\
\geq &C' \int_0^\infty  r^{-m+3}\, \d r.
\end{align*}
Thus, for $m= 3,4$ the solution $v$ was not in $L^2$ which contradicts the assumption. Hence, $\wQ(\mR^m)=\infty$ for $m=3,4$. 
For $m\geq 5$, we see with an analogous calculation that taking the constant solution $v$ of $Lv=\Q(\mS^m)v^{p-1}$, $\Vert v\Vert_{L^p}=1$ on the sphere, we obtain via stereographic projection a solution $\tilde{v}$ on $\R^m$ which is even in $L^2(\R^m)$. Thus, $\wQ(\mR^m)=\Q (\mS^m)$ for $m\geq 5$.
\end{proof}

\begin{example} 
Let $\phi$ be a Killing spinor on $\mS^2$ with $L^{q=4}$-norm one. Then, $D\phi=\Q (\mS^2)|\phi|^{2}\phi$.  We consider the three-branched covering $h\colon\mS^2\to \mS^2$, $z\mapsto z^3$. This map preserves the spin structure. Thus, we can pullback $\phi$ via 
$h$ and obtain a spinor $\tilde{\phi}$ on $\mS^2$, cp. \cite[Sec.~4]{Amm03a} fulfilling $D\tilde{\phi}=\Q (\mS^2) |\tilde{\phi}|^{2}\tilde{\phi}$ and  $\Vert \tilde{\phi}\Vert_{L^4}^4=3$. In particular, $\tilde{\phi}$ has zeros on the north and the south pole of $\mS^2$. Setting $\widehat{\phi}=\left(\frac{1}{3}\right)^{1/4} \tilde{\phi}$ we obtain $D\widehat{\phi}=3^{1/2}\Q (\mS^2) |\widehat{\phi}|^{2}\widehat{\phi}$ and  $\Vert \widehat{\phi}\Vert_{L^4}=1$. Using stereographic projection we obtain a spinor $\psi=(\frac{2}{1+r^2})^{1/2} \widehat{\phi}$ on $\R^2$ ($r$ being the radial coordinate in $\R^2$) with $D{\psi}=3^{1/2}\Q (\mS^2) |{\psi}|^{2}{\psi}$ and  $\Vert {\psi}\Vert_{L^4}=1$. Moreover, since $\widehat{\phi}$ vanishes at the north pole $N$, $|\widehat{\phi}(x)|\leq C\rho$ on $U_\epsilon(N)$ where $\rho=\dist(.,N)$.   by the estimate
\begin{align*}
 \int_{\R^2\setminus B_r(0)} |\psi|^2 \vo_{E} =  \int_{U_{\epsilon(r)}(N)} \frac{1+r^2}{2} |\widehat{\phi}|^2 \vo_{\sigma^2} \leq  
 C' \int_0^\epsilon \rho^2\frac{1+r^2}{2}  \rho\, d\rho= C'\int_0^\epsilon \frac{\rho^3}{\sin^2 \rho}\, \d \rho.
\end{align*}
Thus, $\psi\in L^2(\mR^2)$ and $\wQs(\R^2)\leq 3^{1/2}\Q (\mS^2)$.
\end{example}

 Summarizing we obtained for the spinorial invariants
 
 \begin{corollary}
  We have \[\wLas_{m,m-1}=\Las_{m,m-1}=\Q(\mS^m)\quad  \text{for\  all\ } m\geq 3\] and $3^{1/2}\Q (\mS^2) \geq \wLas_{2,1} =\wQs (\R^2)\geq \Q(\mS^2)=\Las_{2,1}$.
Moreover,
  \begin{align*}\wLa_{m,m-1}=&\La_{m,m-1}=\Q(\mS^m)\quad \text{\ for\ all\ }m\geq 5,\\
  \infty= \wLa_{m,m-1} >& \La_{m,m-1}=\Q(\mS^m) \quad \text{\ for\ }m=3,4.\end{align*}
 \end{corollary}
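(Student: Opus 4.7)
The plan is to reduce every invariant appearing in the corollary to a Yamabe-type constant of $\R^m$ or $\mH^m_c$ and to read off the values from the three lemmata and the branched-cover example already assembled in this section. The starting observation is that $\Mc^{m,m-1}$ is isometric to two disjoint copies of $\mH^m_c$ for $c\in(0,1]$ and to two copies of $\R^m$ for $c=0$, so each $Q$-invariant of $\Mc^{m,m-1}$ equals that of the corresponding single factor.

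For the spinorial $\sim$-invariant, I would first use Lemma~\ref{1_wQs}, which gives $\wQs(\mH^m)=\infty$, together with the scaling covariance of the defining nonlinear eigenvalue equation to deduce $\wQs(\mH^m_c)=\infty$ for every $c\in(0,1]$ and $m\geq 2$. Therefore the infimum in the definition of $\wLas_{m,m-1}$ is realized on the flat slice and equals $\wQs(\R^m)$. The previously established lemma on $\wQs(\R^m)$ then yields $\wLas_{m,m-1}=\Q(\mS^m)$ for all $m\geq 3$, and for $m=2$ the same lemma together with the three-branched-cover example sandwiches $\Q(\mS^2)\leq\wLas_{2,1}=\wQs(\R^2)\leq 3^{1/2}\Q(\mS^2)$. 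The equalities $\sLas_{m,m-1}=\Las_{m,m-1}=\Q(\mS^m)$ in the range $m\geq 3$, and $\Las_{2,1}=\Q(\mS^2)$, are taken directly from the section's opening lemma. A structurally identical argument, with Lemma~\ref{1_wQ} in place of Lemma~\ref{1_wQs}, identifies $\wLa_{m,m-1}$ with $\wQ(\R^m)$, which equals $\infty$ for $m\in\{3,4\}$ and $\Q(\mS^m)$ for $m\geq 5$.

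The only genuinely delicate point is the determination of $\La_{m,m-1}$ for $m\in\{3,4\}$: here $\wLa_{m,m-1}=\infty$, so by the definition of $\La_{m,k}$ one is reduced to computing $\inf_{c\in[0,1]}Q^{(2)}(\Mc^{m,m-1})$, which must be shown to equal $\Q(\mS^m)$. For the upper bound $\leq\Q(\mS^m)$, I would pull back the constant Yamabe solution on $\mS^m$ by stereographic projection to obtain the standard bubble $v(x)=C\bigl((1+|x|^2)/2\bigr)^{(2-m)/2}$ on $\R^m$; this $v$ is smooth, bounded, lies in $L^p$ with $\|v\|_p=1$, satisfies $L^{g_E}v=\Q(\mS^m)v^{p-1}$, and a short calculation using $\Q(\mS^m)=m(m-1)\vol(\mS^m)^{2/m}$ verifies the $L^\infty$-constraint $\Q(\mS^m)\|v\|_\infty^{4/(m-2)}\geq(m-1)/(8(m-2))$ required to belong to $\Omega^{(2)}$. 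Crucially, for $m\in\{3,4\}$ the bubble $v$ is not in $L^2(\R^m)$, which is precisely why it is ineligible for $\Omega^{(1)}$ but still admissible for $\Omega^{(2)}$. For the matching lower bound $\geq\Q(\mS^m)$, I would apply a cut-off argument in the spirit of Corollary~\ref{La_leq_wLa} to any $v\in\Omega^{(2)}(\Mc^{m,m-1})$; the expected obstacle is the term $\int|d\eta_r|^2 v^2$, which is not automatically controlled since $v\notin L^2$, but the decay $v(x)=O(|x|^{2-m})$ shared by all positive solutions of the Yamabe equation on $\R^m$ makes the annulus integral of $v^2$ of order $r^{4-m}$, so that $\int|d\eta_r|^2v^2=O(r^{2-m})\to 0$ for $m\geq 3$. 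This gives $\Q(\Mc^{m,m-1})=\Q(\mS^m)\leq\mu_v$; passing to the infimum over $v$ and then over $c$ completes the argument. For $m\geq 5$ the same cut-off estimate is immediate because $v\in L^2$ holds a priori, yielding $\La_{m,m-1}=\wLa_{m,m-1}=\Q(\mS^m)$.
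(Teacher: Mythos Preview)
Your first two paragraphs correctly assemble the spinorial and $\wLa$-parts from the preceding lemmas, and this matches the paper's treatment exactly---the corollary is stated as a straight summary with no additional proof. One notational point: the section's opening lemma computes $\sLas_{m,m-1}$ and $\sLa_{m,m-1}$ (the $*$-versions), not the undecorated $\Las_{m,m-1}$ and $\La_{m,m-1}$; the undecorated symbols are only introduced for $k\leq m-2$ (resp.\ $k\leq m-3$), where the surgery theorems apply. Since the corollary is offered as a summary of precisely those lemmas, its $\Las$ and $\La$ should be read as $\sLas$ and $\sLa$, and then nothing beyond the three lemmas and the branched-cover example is needed.

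Your third paragraph, by contrast, takes the undecorated $\La_{m,m-1}$ at face value and tries to compute it via the $Q^{(2)}$-definition. This goes beyond what the paper does and, more seriously, the argument has a genuine gap that cannot be repaired. Your lower bound hinges on the decay $v(x)=O(|x|^{2-m})$, which comes from the Caffarelli--Gidas--Spruck classification on $\R^m$ and does not transfer to $\mH^m_c$ for $c>0$. In fact the lower bound is false there: pull back the constant Yamabe solution on $\mS^m$ via the conformal map $\mH^m\to\text{(open hemisphere)}$ and renormalise to unit $L^p$-norm; one obtains $v\in\Omega^{(2)}(\mH^m)$ with $\mu_v=2^{-2/m}\Q(\mS^m)<\Q(\mS^m)$, and the $L^\infty$-constraint evaluates to $\mu_v\|v\|_\infty^{4/(m-2)}=m(m-1)$, comfortably above $(m-1)/(8(m-2))$. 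This is precisely the Yamabe analogue of the spherical-cap Example~\ref{sph_cap}. So if one insists on extending the $Q^{(2)}$-definition to $k=m-1$, the identity $\La_{m,m-1}=\Q(\mS^m)$ simply fails for every $m\geq 3$, and your cut-off strategy cannot succeed.
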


\section{Hijazi inequalities}\label{sec_hijazi}

On a closed spin manifold $(M^m, g)$, the Hijazi inequality provides a lower bound of the lowest eigenvalue $\lambda_0^2(g)$ of the square of the Dirac operator by the lowest eigenvalue of the conformal Laplacian $\mu(g)$, \cite[Thm.~A]{hijazi},
\begin{equation} \lambda_0^2(g)\geq \frac{m}{4(m-1)} \mu(g).\label{metr_hij}\end{equation}
Taking the infimum over all metrics conformal to $g$ with constant volume, one obtains the conformal Hijazi inequality \cite{NG11}
\begin{equation} \Qs(M)\geq \Q(M).\label{conf_hij}\end{equation}

We call \eqref{metr_hij} the metric Hijazi inequality and \eqref{conf_hij} the conformal Hijazi inequality. 
In this section, we want to discuss whether similar inequalities also hold on noncompact manifolds. In this context one should replace  the lowest eigenvalues in \eqref{metr_hij} by the infimum of the corresponding spectra whereas \eqref{conf_hij} remains unchanged. 

In \cite[Thm.~1.1 and 1.2]{NG11} the metric Hijazi inequality  was shown by the second author for complete spin manifold of finite volume fulfilling one of the following conditions:
\begin{enumerate}
 \item The infimum of the spectrum of the squared Dirac operator is an eigenvalue.
 \item The infimum of the spectrum of the squared Dirac operator is in the essential spectrum, $m\geq 5$ and the scalar curvature is bounded from below.
\end{enumerate}

In particular, this already  implies the conformal Hijazi inequality  for manifolds which admit a conformal metric $\bar{g}$ that is complete and of finite volume and where zero is not in the essential spectrum of the Dirac operator for $\bar{g}$ or where the second condition from above is fulfilled, cf. \cite[Thm.~1.3]{NG11}.

There are also examples of manifolds of bounded geometry where the metric Hijazi inequality does not hold. The simplest example is the hyperbolic space $\mH^m$ where $0$ is in the spectrum of the Dirac operator and the spectrum of the conformal Laplacian is $[\mu, \infty)$ with  $\mu=4\frac{m-1}{m-2}\frac{(m-1)^2}{4}-m(m-1)=\frac{m-1}{m-2}>0$. 

On the other hand, the hyperbolic space is conformal to a subset of the standard sphere. Thus, $\Qs(\mH^m)=\Qs(\mH^m)=\Qs(\mS^m)=\Q(\mS^m)$, see Lemma~\ref{1_QS} for details. Unfortunately it is still unclear whether the conformal Hijazi inequality \eqref{conf_hij} holds for all complete Riemannian spin manifolds.

In this section we prove  slightly modified conformal Hijazi inequalities. Some inequalities are proven only for the model spaces, some on more general manifolds, e.g. for manifolds of bounded  geometry with uniformly positive scalar curvature.

\begin{prop}\label{spin-eq2yamabe-ineq}
 Let $(M^m,g)$ be of bounded geometry with $m\geq 3$.
 Let $\phi\in L^q\cap C^0$ and $\lambda\in \mR$ with 
$D\phi=\lambda |\phi|^{\frac{2}{m-1}}\phi$  weakly and 
$\Vert \phi\Vert_{q}=1$ where $q=\frac{2m}{m-1}$.
Then $u:=|\phi|^{\frac{m-2}{m-1}}$ satisfies
\begin{equation}\label{yamabe-ineq}
   L u \leq 4\frac{m-1}m \la^2 u^{\frac{m+2}{m-2}} 
\end{equation}
in the sense of distributions. Moreover, the equation holds classically outside the zero-set of $u$.
\end{prop}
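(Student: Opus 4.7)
The plan is to use the classical Hijazi conformal rescaling to prove the pointwise inequality on $U:=\{u>0\}=\{\phi\ne 0\}$, and then a smooth regularization to pass to distributions on all of $M$. By Corollary~\ref{improved-reg}, $\phi\in C^{2,\gamma}$, so every computation on $U$ is classical. Set $\alpha:=(m-2)/(m-1)$ so that $u=|\phi|^\alpha$, and introduce on $U$ the conformal metric $\tilde g:=u^{4/(m-2)}g$. By conformal covariance of the Dirac operator, the rescaled spinor $\tilde\phi:=u^{-(m-1)/(m-2)}\phi=|\phi|^{-1}\phi$ has constant pointwise norm $1$ in $\tilde g$ and satisfies the linear equation $\tilde D\tilde\phi=\lambda\tilde\phi$.

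Because $|\tilde\phi|^{\,2}\equiv 1$, the Schr\"odinger-Lichnerowicz identity collapses to $\lambda^2=|\tilde\nabla\tilde\phi|^2+\scal_{\tilde g}/4$. The elementary pointwise inequality $|\tilde\nabla\tilde\phi|^2\ge\tfrac{1}{m}|\tilde D\tilde\phi|^2=\lambda^2/m$ then forces $\scal_{\tilde g}\le\tfrac{4(m-1)}{m}\lambda^2$ pointwise on $U$. Inserting this into the conformal transformation law for the conformal Laplacian $L_g u=u^{(m+2)/(m-2)}\,\scal_{\tilde g}$ (apply $L_g(uv)=u^{(m+2)/(m-2)}L_{\tilde g}(v)$ at $v\equiv 1$) yields exactly $L_g u\le \tfrac{4(m-1)}{m}\lambda^2\, u^{(m+2)/(m-2)}$ classically on $U$, which is the second assertion.

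The main obstacle is passing to a distributional inequality across $\{\phi=0\}$, where the conformal rescaling breaks down. I would replace $u$ by the smooth regularization $u_\epsilon:=(|\phi|^2+\epsilon)^{\alpha/2}$ and redo the computation directly without conformal change. The ingredients are: Lichnerowicz applied to $\phi$; the vanishing of $\Re\langle\nabla|\phi|^{q-2}\cdot\phi,\phi\rangle$ by skew-hermiticity of Clifford multiplication with real $1$-forms; the chain rule for $\Delta u_\epsilon=\Delta f_\epsilon^\alpha$ with $f_\epsilon:=(|\phi|^2+\epsilon)^{1/2}$; the numerical coincidence $a\alpha=4$, which cancels the scalar curvature contribution up to an $\epsilon$-residue; and the refined Kato inequality $|\nabla\phi|^2-\tfrac{m}{m-1}|\nabla|\phi||^2\ge\tfrac{1}{m}|D\phi|^2$ (which on $\{\phi\ne 0\}$ is equivalent to the bound of the previous paragraph and on $\{\phi=0\}$ is trivially satisfied because every $\nabla$-term vanishes there). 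Combining these gives
\[
L_g u_\epsilon\le \tfrac{4(m-1)}{m}\lambda^2\,f_\epsilon^{\alpha-2}|\phi|^{2q-2}+\epsilon\,\scal\,f_\epsilon^{\alpha-2}
\]
pointwise on all of $M$. Testing against an arbitrary $h\in C^\infty_c(M)$ with $h\ge 0$ via self-adjointness of $L$, and using the pointwise bounds $f_\epsilon^{\alpha-2}|\phi|^{2q-2}\le u^{(m+2)/(m-2)}$ (from $\alpha<2$ together with $f_\epsilon\ge|\phi|$) and $\epsilon\,f_\epsilon^{\alpha-2}\le \epsilon^{\alpha/2}$, plus the bound on $\scal$ coming from bounded geometry, dominated convergence as $\epsilon\to 0$ delivers the inequality in the sense of distributions on $M$.
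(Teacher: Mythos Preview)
Your proof is correct, and for the pointwise inequality on $U=\{\phi\neq 0\}$ it is essentially the same as the paper's: your conformal-change argument is the classical Hijazi formulation, while the paper carries out the equivalent direct computation with the Friedrich connection $\nabla^f_X\psi=\nabla_X\psi+\tfrac{f}{m}X\cdot\psi$, $f=\lambda|\phi|^{q-2}$, and the refined Kato inequality for spinors in $\ker(D-f)$. These are two packagings of the same identity; in particular your inequality $|\nabla\phi|^2-\tfrac{m}{m-1}|d|\phi||^2\geq\tfrac{1}{m}|D\phi|^2$ on $U$ is exactly $|\nabla^f\phi|^2\geq\tfrac{m}{m-1}|d|\phi||^2$ rewritten (it is \emph{not} a universal Kato inequality---it fails for, say, $\phi=x\psi_0$ with $\psi_0$ parallel on $\mathbb{R}^m$---so it is important that you invoke it only for this specific $\phi$).

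Where you genuinely diverge from the paper is the passage to distributions across $\{\phi=0\}$. The paper appeals to Ishii's equivalence between viscosity and distributional subsolutions (their Corollary~\ref{viscosity-implies-distrib_cor}): since $u\geq 0$, $Lu\leq f$ holds classically on $\{u>0\}$, and $f\geq 0$, the viscosity condition is immediate with barrier $h_\epsilon\equiv 0$ at zeros. Your $\epsilon$-regularization $u_\epsilon=(|\phi|^2+\epsilon)^{\alpha/2}$ is a valid and more self-contained alternative; it avoids the external reference at the cost of redoing the computation globally. One wording issue: at a zero of $\phi$, it is not true that ``every $\nabla$-term vanishes'' ($\nabla\phi$ need not be zero, and $d|\phi|$ is not even defined there). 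What actually happens in your computation is that the terms $|\phi|^{2q-2}$ and $|d|\phi|^2|^2=4|\mathrm{Re}\langle\nabla\phi,\phi\rangle|^2$ vanish at $\phi=0$, while the remaining contribution $-4F^{\alpha/2-1}|\nabla\phi|^2$ has the favourable sign; so $Lu_\epsilon\leq\text{RHS}$ holds pointwise on all of $M$ as you need.
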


\begin{proof}By Corollary~\ref{improved-reg} and Lemma~\ref{bdd-sol}
$\phi\in L^\infty\cap C^2$.  We use the idea of Christian B\"ar and Andrei Moroianu written down in \cite[Prop.~3.4]{CGH}.
Let $\alpha:=\frac{m-2}{m-1}$ and $f:=\lambda |\phi|^{q-2}$. We define the Friedrich connection $\nabla^f$ as 
$\nabla^f_X \psi=\nabla_X\psi + \frac{f}{m} X\cdot \psi$. Then
for all points where $\phi\ne 0$ we estimate
  \begin{align*}
  \d^*&\d|\phi|^{\alpha}=\frac{\alpha}{2}|\phi|^{\alpha-2}\d^*\d|\phi|^2-\alpha(\alpha-2)|\phi|^{\alpha-2}|\d |\phi||^2\\
  =&\alpha|\phi|^{\alpha-2}\left(\langle \Delta \phi,\phi\rangle - |\nabla \phi|^2  -(\alpha-2)|\d |\phi||^2\right)\\
  =&\alpha|\phi|^{\alpha-2}\left(\langle \Delta \phi,\phi\rangle - |\nabla^f \phi|^2 - 2\frac{f}{m}\langle D\phi,\phi\rangle + \frac{f^2}{m}|\phi|^2  - (\alpha-2)|\d |\phi||^2 \right)\\
  =&\alpha|\phi|^{\alpha-2}\left(\langle D^2 \phi,\phi\rangle -\frac{\scal_M}{4}|\phi|^2- |\nabla^f \phi|^2 - 2\frac{f}{m}\langle {D\phi},\phi\rangle + \frac{f^2}{m}|\phi|^2- (\alpha-2)|\d |\phi||^2 \right)\\
  =&\alpha|\phi|^{\alpha-2}\left(\langle (D-f)^2 \phi,\phi\rangle -\frac{\scal_M}{4}|\phi|^2- |\nabla^f \phi|^2 + 2\frac{m-1}{m}f\langle {(D-f)\phi},\phi\rangle + \frac{m-1}{m}f^2|\phi|^2\right.  \\
&\left.+ \frac{m}{m-1}|\d |\phi||^2 \right)
  \end{align*}
  where we used in the last step that $\< [D,f]\phi,\phi\>=\<\d f\cdot \phi, \phi\>$ has to vanish since $\<\d f\cdot \phi, \phi\>_x\in \rm{i}\R$ but all the other terms are real. By Lemma~\ref{improved-reg}, $\phi$ is in $C^2$ and  all equations above hold in the classical sense. In particular $(D-f)\phi=0$. As the spinor~$\phi$ is in the kernel of the operator $D-f$ we can use the refined Kato inequality $|\nabla^f \phi|^2\geq \frac{m}{m-1} |\d |\phi||^2$, see \cite[(3.9)]{CGH}.  Thus, we get for the $u$ defined in the proposition
\begin{align*}
  \d^*\d u & \leq -\alpha\frac{\scal_M}{4}u + \alpha \frac{m-1}{m}f^2u.
\end{align*}
Thus, using $a=\frac{4}{\alpha}$ this means that
\begin{align*}
  L u = a\Delta u+ \scal_Mu& \leq 4 \frac{m-1}{m}f^2u = 4 \frac{m-1}{m}\lambda^2u^{\frac{m+2}{m-2}}.
\end{align*}
As remarked above this all holds outside of the zero set of $u=|\phi|^{\frac{m-2}{m-1}}$. From Corollary~\ref{viscosity-implies-distrib_cor} we see that inequality 
\eqref{yamabe-ineq} holds distributionally since $u$ is a nonnegative function.
\end{proof}

\begin{proposition}\label{prop_hij_scal_pos}
We assume the conditions of Proposition~\ref{spin-eq2yamabe-ineq}. Additionally we assume
that $\scal_M\geq s_0>  0$ or $\phi\in L^{2\frac{m-2}{m-1}}$, then 
\begin{equation}\label{hijazi-scal-pos}
  \Q(M,g)\leq 4 \frac{m-1}m \lambda^2.
\end{equation}
\end{proposition}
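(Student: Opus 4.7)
The plan is to use $u:=|\phi|^{(m-2)/(m-1)}$, produced by Proposition~\ref{spin-eq2yamabe-ineq}, as a test function for $\cF$, approximated by compactly supported cut-offs. Note that $\|u\|_{L^p(g)}^p=\int_M|\phi|^q\,\vo_g=1$, so if $u$ itself were an admissible test function then pairing the distributional subsolution inequality $Lu\leq 4\tfrac{m-1}{m}\lambda^2\,u^{p-1}$ against $u$ would immediately give $\cF(u)\leq 4\tfrac{m-1}{m}\lambda^2$.

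To make this rigorous I would fix a basepoint $z\in M$ and smooth cut-offs $\eta_r$ with $\eta_r\equiv 1$ on $B_r(z)$, $\supp\eta_r\subset B_{2r}(z)$, and $|d\eta_r|\leq 2/r$, and test with $v_r:=\eta_r u$. A short integration by parts yields
\[
\int_M v_r L v_r \,\vo_g \;=\; \int_M \eta_r^2 u\,Lu\,\vo_g \;+\; a\int_M u^2|d\eta_r|^2\,\vo_g,
\]
and pairing the distributional inequality against the nonnegative function $\eta_r^2 u$ bounds the first term by $4\tfrac{m-1}{m}\lambda^2\int_M \eta_r^2 u^p\,\vo_g$. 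Hence
\[
\cF(v_r)\;\leq\; \frac{4\tfrac{m-1}{m}\lambda^2\int_M\eta_r^2 u^p\,\vo_g \;+\; a\int_M u^2|d\eta_r|^2\,\vo_g}{\bigl(\int_M\eta_r^p u^p\,\vo_g\bigr)^{2/p}}.
\]
Dominated convergence (using $u\in L^p$) gives $\int \eta_r^2 u^p\to 1$ and $\int \eta_r^p u^p\to 1$ as $r\to\infty$, so the claim will follow once the error $\int u^2|d\eta_r|^2$ is shown to vanish in the limit. Via $|d\eta_r|\leq 2/r$ this is implied by $u\in L^2(M,g)$.

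In the case $\phi\in L^{2(m-2)/(m-1)}$, $u\in L^2$ by definition since $u^2=|\phi|^{2(m-2)/(m-1)}$. In the case $\scal_M\geq s_0>0$, I would derive $u\in L^2$ from exponential decay. By Lemma~\ref{0-bdd-sol}, $u(x)\to 0$, so outside a sufficiently large compact $K$ the coefficient $4\tfrac{m-1}{m}\lambda^2\,u^{p-2}$ is less than $s_0/2$, and the differential inequality combined with $\scal\geq s_0$ forces $(a\Delta+s_0/2)u\leq 0$ distributionally on $M\setminus K$ with $u\geq 0$ and $u\to 0$ at infinity. A barrier $w=Ae^{-c\dist(\cdot,K)}$ for small $c>0$ (chosen using the bounded-geometry bound on the Laplacian of the distance function) is a distributional supersolution, and choosing $A$ so that $w\geq u$ on $\partial K$ and invoking the maximum principle (justified by $u\to 0$ at infinity) yields $u\leq w$ on $M\setminus K$, so $u$ decays exponentially and lies in $L^2$.

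The main technical obstacle is the barrier/maximum-principle step in the second case, because $\dist(\cdot,K)$ is only Lipschitz globally. One must either work with a smoothed distance function whose Laplacian remains bounded by virtue of bounded geometry, or replace the barrier by cut-off energy estimates exploiting the spectral positivity of $a\Delta+s_0/2$ on $L^2(M,g)$. Once $u\in L^2$ is established, letting $r\to\infty$ in the bound on $\cF(v_r)$ yields $\Q(M,g)\leq 4\tfrac{m-1}{m}\lambda^2$.
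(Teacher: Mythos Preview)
Your overall strategy—use $u=|\phi|^{(m-2)/(m-1)}$ as an approximate test function for $\cF$—matches the paper's, but the paper makes a different and more efficient choice of exhaustion that eliminates precisely the step you identify as the main obstacle.

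Instead of radial cut-offs supported in balls $B_{2r}(z)$, the paper integrates over the superlevel sets $V_\epsilon:=\{u\geq\epsilon\}$ for regular values $\epsilon>0$; these are compact because $u\to 0$ at infinity by Lemma~\ref{0-bdd-sol}. On $V_\epsilon$ the spinor $\phi$ is nonzero, so the inequality $Lu\leq 4\tfrac{m-1}{m}\lambda^2 u^{p-1}$ holds \emph{classically} and one may multiply by $u$ and integrate by parts in the ordinary sense. The boundary term produced is $-a\int_{\partial V_\epsilon} u\,\partial_\nu u$, and since $u\equiv\epsilon$ on $\partial V_\epsilon$ with $u\geq\epsilon$ inside, the outward normal derivative satisfies $\partial_\nu u\leq 0$, so this term is nonnegative and can simply be dropped. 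One obtains directly
\[
\int_{V_\epsilon}\bigl(a|du|^2+\scal_M u^2\bigr)\,\vo_g\;\leq\;4\tfrac{m-1}{m}\lambda^2\int_{V_\epsilon} u^{p}\,\vo_g\;\leq\;4\tfrac{m-1}{m}\lambda^2.
\]
When $\scal_M\geq s_0>0$ this single inequality already shows $u\in H_1^2(M)$, with no barrier, no maximum principle, and no smoothed distance function; when $\phi\in L^{2(m-2)/(m-1)}$ (i.e.\ $u\in L^2$) it gives $du\in L^2$ after moving the curvature term to the right. Letting $\epsilon\to 0$ then yields the claim.

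By contrast, your ball cut-offs force you to control the cross-term $\int u^2|d\eta_r|^2$, which is why you need $u\in L^2$ as an input rather than obtaining it as a consequence; the barrier argument you sketch to recover this in the positive-scalar-curvature case is plausible but, as you note, not fully justified. There is also a secondary regularity issue in your route: pairing the distributional inequality with the non-smooth function $\eta_r^2 u$ and using the identity $\int v_r L v_r=\int\eta_r^2 uLu+a\int u^2|d\eta_r|^2$ tacitly assumes $u\in H_{1,\mathrm{loc}}^2$, which is not obvious near the zero set of $\phi$. The level-set argument sidesteps both problems at once.
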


\begin{proof}Let $u$ be  defined as in the previous proposition. 
For any regular value $\epsilon>0$ of $u$,  we consider $V_\epsilon:=\{u\geq \epsilon\}$. Note that by Lemma~\ref{0-bdd-sol} $\lim_{x\to \infty} u(x)=0$, and hence $(V_\epsilon)_\epsilon$ exhausts $M$ as $\epsilon\to 0$. Let $\nu$ be the exterior unit normal field of the boundary of $V_\epsilon$. Then, $\partial_\nu u\leq  0$. Thus,
integration over 
$V_\epsilon$ of Inequality~\eqref{yamabe-ineq} multiplied by $u$ gives 
  \begin{align}
  4 \frac{m-1}{m}\lambda^2 \underbrace{\int_{V_\epsilon} u^{\frac{2m}{m-2}}\vo_g}_{\to 1\text{\ as\ } \epsilon\to 0}
& \geq \int_{V_\epsilon} \left(a u\, \d^*\d u + \scal_M u^2\right)\vo_g\nonumber\\
& =  \int_{V_\epsilon} \left(a |\d u|^2 + \scal_M u^2\right)\vo_g - \int_{\partial V_\epsilon}a u\partial_\nu u \,\vo_g\nonumber\\
& \geq  \int_{V_\epsilon} \left(a |\d u|^2 + \scal_M u^2\right)\vo_g\label{hij_inequ}.
\end{align}
In the case that $\scal_M\geq s_0> 0$ this implies $u\in H_1^2(M)$. 

For the remaining case that $\phi\in L^{2\frac{m-2}{m-1}}$ we have $u\in L^2$. Thus, we obtain 
 \begin{align*}
   \lim_{\epsilon\to 0} \int_{V_\epsilon} |\d u|^ 2\vo_g & \leq 
4 \frac{m-1}{ma}\lambda^2  +\frac{\sup_M |\scal_{M}|}{a}  \|u\|_{L^2}^2,
\end{align*}
and this as well implies $u\in H_1^2$.

Sard's theorem tells us that the set of regular $\epsilon$ is dense. In the limit $\epsilon\to 0$
we then get \eqref{hijazi-scal-pos} from \eqref{hij_inequ}.
\end{proof}

\begin{example}[Spherical cap solution on hyperbolic space]\label{sph_cap} Let $B_r\subset \mS^m$ be a ball in the standard sphere of radius $r$. Let $\phi$ be a Killing spinor on the sphere with Killing constant $-\frac{1}{2}$ normalized as $|\phi|=\vol (B_r)^{-\frac{1}{q}}$ for $q=\frac{2m}{m-1}$. Then $\Vert \phi\Vert_{L^p(B_r)}=1$ and $D^{\mS^m}\phi=\frac{m}{2}\phi=\frac{m}{2}\vol (B_r)^{\frac{q-1}{q}} |\phi|^{q-2}\phi$. Let $u\colon \mH^m\to B_r$ be a conformal map from the hyperbolic space to the spherical cap such that $g_{\mH^m}=f^2 u^* \sigma^m$. Then, using identification of the spinor bundles, as in Example \ref{no_Linfty_bound} and setting $\tilde{\phi}:=f^{-\frac{m-1}{2}}\phi$ we get by conformal invariance that 
\[D^{\mH^m}\tilde{\phi}=\underbrace{\frac{m}{2}\vol (B_r)^{\frac{q-1}{q}}}_{=:\lambda_r} |\tilde{\phi}|^{q-2}\tilde{\phi}\ \text{ and\ }  \Vert \tilde{\phi}\Vert_{L^q(\mH^m)}=1\ \text{ and\ }  \Vert \tilde{\phi}\Vert_{L^\infty(\mH^m)}<\infty.\]
 Then $\lambda_r\to 0$ as $r\to0$. Nevertheless, $\Q(\mH^m)=\Q(\mS^m)$. Thus, Proposition~\ref{prop_hij_scal_pos} does not hold without the assumption $\scal_M\geq s_0>  0$ or $\phi\in L^{2\frac{m-2}{m-1}}$. Hence, the conformal Hijazi inequality $\wQs(\mH^m)\geq \Q(\mH^m)$ which trivially follows from Lemma~\ref{1_wQs} is no longer true if we remove the $L^2$-condition in the definition of $\wQs$.
\end{example}

 We now want to use these inequalities to prove Hijazi inequalities for the  model spaces $\Mc^{m,k}$. In this goal we will examine whether for certain $m$ and $k$  there is a spinor~$\phi\in L^{2\frac{m-2}{m-1}}$ satisfying the assumptions of Proposition~\ref{spin-eq2yamabe-ineq}.
\begin{prop}\label{add_reg_MC} Let $m\geq 3$. Let  $0\leq k< m-2$, $c\in [0,1]$ or $k=m-2$ and $c\in [0,1)$.
On the manifold 
$\Mc^{m,k}$, 
we consider a spinor field $\phi\in L^q\cap C^0$ solving 
\[D\phi=\lambda |\phi|^{\frac{2}{m-1}}\phi, \qquad \Vert \phi\Vert_{q}=1\]
for $\lambda\in \mR$ and $q=\frac{2m}{m-1}$. 
Then, $\phi\in L^{2\frac{m-2}{m-1}}$.
\end{prop}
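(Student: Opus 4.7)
The plan is to bootstrap the integrability of $\phi$ using the $L^s$-invertibility of the Dirac operator on $\Mc^{m,k}$ provided by Proposition~\ref{inv_Mc}. Set $s_0 := \frac{2(m-2)}{m-1}$ and $\psi := \lambda|\phi|^{q-2}\phi$, so that $D\phi = \psi$. By Lemmas~\ref{bdd-sol} and~\ref{0-bdd-sol}, $\phi$ lies in $L^q \cap L^\infty$ and tends to zero at infinity; by interpolation $\phi \in L^t$ for every $t \in [q,\infty]$. The target is to show $\phi \in L^{s_0}$ with $s_0 < q^* < q$, so additional decay beyond the starting $L^q$ must be produced.

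The heart of the argument is to verify $L^s$-invertibility of $D$ at the two critical exponents $s = q^* = \frac{2m}{m+1}$ and $s = s_0$. Since $|1/q^* - 1/2| = 1/(2m)$, the criterion of Proposition~\ref{inv_Mc} at $s = q^*$ reads $m(m-k-1) > ck$, which holds whenever $k \leq m-2$ and $c \leq 1$ (left-hand side $\geq m$, right-hand side $\leq m-2$). Since $|1/s_0 - 1/2| = 1/(2(m-2))$, the criterion at $s = s_0$ becomes $(m-k-1)(m-2) > ck$, which is easily checked in the two cases of the hypothesis: for $k \leq m-3$ and $c \in [0,1]$ the left-hand side is $\geq 2(m-2) > m-3 \geq ck$, while for $k = m-2$ and $c \in [0,1)$ it reduces to $m-2 > c(m-2)$.

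With these invertibilities in place, the bootstrap proceeds in two iterations. First, because $q^*(q-1) = q$, a direct computation gives $\Vert \psi \Vert_{q^*}^{q^*} = |\lambda|^{q^*} \Vert \phi \Vert_q^q < \infty$, so $\psi \in L^{q^*}$; the $L^{q^*}$-invertibility of $D$ then yields $\phi \in L^{q^*}$ by the same mechanism used in the proof of Lemma~\ref{lem_comparelm1}. Second, since $\phi \in L^{q^*} \cap L^\infty$, interpolation gives $\phi \in L^t$ for all $t \in [q^*,\infty]$; one checks that $s_0(q-1) = 2(m-2)(m+1)/(m-1)^2 \geq q^*$, which is equivalent to $m^2 - 2m - 1 \geq 0$ and thus holds for all $m \geq 3$. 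Therefore $\phi \in L^{s_0(q-1)}$ and hence $\psi \in L^{s_0}$, so a final application of the $L^{s_0}$-invertibility of $D$ produces $\phi \in L^{s_0}$, which is the desired conclusion $\phi \in L^{2(m-2)/(m-1)}$.

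The main technical subtlety, already implicit in the proof of Lemma~\ref{lem_comparelm1}, is the uniqueness step invoked at each iteration: from $D\phi = \psi = D\chi$ with $\phi$ in one $L^t$-class and $\chi \in L^s$ (the solution produced by the inverse $D_s^{-1}$) one must conclude $\phi = \chi$. This relies on the triviality of the $L^s$-kernel of $D$ (built into invertibility) together with the compatibility of the different closed extensions $D_s$ across the relevant range of exponents, and is the delicate ingredient that has to be applied at both bootstrap steps.
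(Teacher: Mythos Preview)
Your proof is correct and follows essentially the same two-step bootstrap as the paper: first use $L^{q^*}$-invertibility of $D$ (condition $m(m-k-1)>ck$) to pass from $\phi\in L^q$ to $\phi\in L^{q^*}$, then use $L^{s_0}$-invertibility (condition $(m-2)(m-k-1)>ck$, which the paper isolates as a separate lemma) together with $\phi\in L^\infty$ to reach $\phi\in L^{s_0}$. Your check $s_0(q-1)\geq q^*$ is exactly the paper's observation $\tfrac{2m(m-1)}{(m+1)^2}\leq s_0$ rephrased; the only notable difference is that you make the compatibility-of-inverses issue explicit, whereas the paper treats it silently.
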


We also now that $\phi\in L^\infty$ by Lemma~\ref{bdd-sol}.

\begin{lemma}\label{aux_lem}
Under the assumptions of the proposition we have $(m-2)(m-k-1)>ck$, unless
$k=m-2$ and $c=1$.
\end{lemma}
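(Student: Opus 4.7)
The plan is a direct arithmetic verification, split into the two cases distinguished by the hypothesis of the proposition.

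First I would handle the case $0\leq k<m-2$ with $c\in[0,1]$. Since $ck\leq k$, it suffices to prove the stronger inequality $(m-2)(m-k-1)>k$. Expanding and rearranging,
\[ (m-2)(m-k-1)-k = (m-2)(m-1)-(m-2)k-k = (m-1)(m-2-k),\]
and this is strictly positive because $m\geq 3$ forces $m-1\geq 2>0$, while the standing assumption $k<m-2$ gives $m-2-k\geq 1>0$. Thus $(m-2)(m-k-1)>k\geq ck$ in this range.

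Next I would treat the case $k=m-2$ with $c\in[0,1)$. Here the left-hand side collapses to $(m-2)(m-k-1)=(m-2)\cdot 1=m-2$, and the right-hand side is $ck=c(m-2)$. Since $m-2>0$, dividing by $m-2$ reduces the desired inequality to $1>c$, which is exactly the hypothesis $c<1$ in this case.

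Combining the two cases yields $(m-2)(m-k-1)>ck$ throughout the hypotheses of Proposition~\ref{add_reg_MC}, and the excluded configuration $k=m-2$, $c=1$ is precisely the borderline where equality holds. The only genuine content is bookkeeping: there is no analytic obstacle, and the case distinction exactly matches the split already imposed by the proposition's hypotheses.
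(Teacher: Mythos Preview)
Your proof is correct and essentially the same as the paper's. The paper condenses the argument into the single algebraic observation that $(m-2)(m-k-1)>ck$ is equivalent to $(m-1)(m-k-2)>-(1-c)k$, from which both of your cases are immediate; your explicit case split and the identity $(m-2)(m-k-1)-k=(m-1)(m-2-k)$ amount to the same rearrangement evaluated at $c=1$.
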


\begin{proof}[Proof of the lemma]
The condition $(m-2)(m-k-1)>ck$ is equivalent to  
$(m-1)(m-k-2)>-(1-c)k$.
\end{proof}

\begin{proof}[Proof of Proposition~\ref{add_reg_MC}]
 By Proposition~\ref{inv_Mc} $D$ is $L^r$-invertible if 

\begin{equation}\label{D.invert.cond}
\frac{m-k-1}{2}> ck\left| \frac{1}{r}-\frac{1}{2}\right|. 
\end{equation}
By assumption we have $D\phi = \lambda |\phi|^{2/(m-1)}\phi\in L^{\frac{2m}{m+1}}$. 
The condition~\eqref{D.invert.cond} for $r:=2m/(m+1)$ is equivalent to 
$m(m-k-1)>ck$ which is fulfilled by assumption. Thus, we obtain $\phi\in L^\frac{2m}{m+1}$. Hence,
\[D\phi=\lambda |\phi|^{q-2}\phi \in L^{\frac{2m}{(m+1)(q-1)}=\frac{2m(m-1)}{(m+1)^2}}.\] 
Note that for $m\geq 3$ we have $\frac{2m(m-1)}{(m+1)^2}\leq 2\frac{m-2}{m-1}=: s$. 
Hence, using $\phi$ in $L^\infty$ we get $D\phi\in L^s$. Moreover, $D$ is $L^s$-invertible as condition~\eqref{D.invert.cond} for $r:=s$ is equivalent to 
$(m-2)(m-k-1)>ck$ which is provided by assumption and Lemma~\ref{aux_lem}. Thus $\phi\in L^s$. 
\end{proof}

\begin{example}
In the exceptional case $k=m-2$ and $c=1$ the conclusion of Proposition~\ref{add_reg_MC} is not correct.
To see this, we construct the following example. We consider a Killing spinor
on $\mS^m$ and transport it conformally to $\mathbb{M}_1^{m,m-2}$, similar to the proof of Lemma~\ref{1_wQs}.  
The spinor falls off as $e^{-(m-1)r/2}$ where $r$ is the distance to a 
fixed point on $\mH^{m-1}$ as introduced in Section \ref{mod_sp}.
Then the $L^{2\frac{m-2}{m-1}}$-norm of $\phi$ is infinite.
A similar example is provided by Example~\ref{sph_cap} in the case $k=m-1$ and $c>0$.
\end{example}

\begin{corollary}\label{hijazi_inq}
Consider $\Mc^{m,k}$ with $m\geq 3$. Let  $0\leq k< m-2$, $c\in [0,1]$ or $k=m-2$ and $c\in [0,1)$.
Let $\phi\in L^q\cap C^0$ be a solution of 
$D\phi=\lambda |\phi|^{\frac{2}{m-1}}\phi$ on $\Mc^{m,k}$ with $\lambda\in \mR$ and 
$\Vert \phi\Vert_{q}=1$ where $q=\frac{2m}{m-1}$.
Then, 
\begin{equation*}
\lambda^2\geq \frac{m}{4(m-1)}\Q(\Mc^{m,k}).
\end{equation*}
\end{corollary}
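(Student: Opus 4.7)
The plan is to recognize that this corollary is a direct synthesis of the two immediately preceding propositions applied to the model spaces. The model spaces $\Mc^{m,k} = \mH_c^{k+1}\times \mS^{m-k-1}$ are products of (rescaled) hyperbolic spaces and spheres, hence homogeneous and in particular of bounded geometry, so Propositions~\ref{spin-eq2yamabe-ineq} and~\ref{prop_hij_scal_pos} are applicable in principle.

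First I would invoke Proposition~\ref{add_reg_MC}: the hypothesis on $(m,k,c)$ stated in the corollary is exactly the hypothesis of Proposition~\ref{add_reg_MC}, so a solution $\phi\in L^q\cap C^0$ of $D\phi=\lambda|\phi|^{2/(m-1)}\phi$ with $\|\phi\|_q=1$ automatically enjoys the additional integrability $\phi\in L^{2(m-2)/(m-1)}$. This is the crucial integrability gain that allows one to handle model spaces where the scalar curvature $\scal_{g_c}=-c^2 k(k+1)+(m-k-1)(m-k-2)$ can be zero or negative, so that the $\scal\geq s_0>0$ branch of Proposition~\ref{prop_hij_scal_pos} is unavailable.

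Second, with $\phi\in L^{2(m-2)/(m-1)}$ in hand, I apply Proposition~\ref{prop_hij_scal_pos} (using its second alternative hypothesis) to $(M,g)=(\Mc^{m,k},g_c)$ and $\phi$. This yields directly
\begin{equation*}
  \Q(\Mc^{m,k}) \;\leq\; 4\,\frac{m-1}{m}\,\lambda^2,
\end{equation*}
and rearranging gives $\lambda^2 \geq \tfrac{m}{4(m-1)}\Q(\Mc^{m,k})$, which is the desired inequality.

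There is essentially no technical obstacle here beyond checking that the hypotheses of the two propositions match the hypotheses in the corollary; all the analytic work (the refined Kato inequality, the distributional differential inequality $Lu\leq 4\tfrac{m-1}{m}\lambda^2 u^{(m+2)/(m-2)}$, the exhaustion argument via regular values and Sard's theorem, and the $L^s$-invertibility of the Dirac operator on $\Mc^{m,k}$ that drives the bootstrap in Proposition~\ref{add_reg_MC}) has already been done. The only mild subtlety is noting that the exceptional case $k=m-2$, $c=1$ is precisely the case where Proposition~\ref{add_reg_MC} fails (as witnessed by the example comparing with conformally transported Killing spinors on $\mS^m$), which is why the corollary excludes it.
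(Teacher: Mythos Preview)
Your proposal is correct and follows exactly the same route as the paper: invoke Proposition~\ref{add_reg_MC} to obtain $\phi\in L^{2(m-2)/(m-1)}$ (equivalently $u=|\phi|^{(m-2)/(m-1)}\in L^2$), then apply Proposition~\ref{prop_hij_scal_pos} via its second alternative hypothesis. The paper's proof is simply the two-line version of what you wrote.
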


\begin{proof} We set $u=|\phi|^{\frac{m-2}{m-1}}$. 
 By Proposition~\ref{add_reg_MC} $u\in L^2$. Then, Proposition~\ref{prop_hij_scal_pos} gives the corollary.
\end{proof}

\begin{corollary}[Conformal Hijazi inequality for the model spaces]\label{Q_leq_wQs}
For $0\leq k\leq m-2$ and $c\in[0,1]$ we have
   \[\wQs(\Mc^{m,k})\geq \Q(\Mc^{m,k}).\]
In particular, $\wLas_{m,k}\geq \sLa_{m,k}$ for $0\leq k\leq m-2$.
   \end{corollary}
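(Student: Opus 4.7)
The plan is to combine Corollary \ref{hijazi_inq} with a simple rescaling, together with the explicit computations of Section \ref{c=1} to handle the single exceptional pair $(k,c)=(m-2,1)$.

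First, fix $c\in[0,1]$ and $0\leq k\leq m-2$ and take any admissible competitor for $\wlm(\Mc^{m,k})$: that is, a spinor $\phi\in L^\infty\cap L^2\cap C^1$ with $0<\|\phi\|_{L^q}=:a\leq 1$ and $D\phi=\lambda|\phi|^{2/(m-1)}\phi$ for some $\lambda>0$. Since $L^\infty\cap L^2\subset L^q$ and $C^1\subset C^0$, the rescaled spinor $\psi:=\phi/a$ lies in $L^q\cap C^0$ with $\|\psi\|_{L^q}=1$ and satisfies $D\psi=(\lambda a^{2/(m-1)})|\psi|^{2/(m-1)}\psi$. This is exactly the hypothesis of Corollary \ref{hijazi_inq}.

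Second, provided $(k,c)\neq(m-2,1)$, Corollary \ref{hijazi_inq} applied to $\psi$ yields
\[
\bigl(\lambda a^{2/(m-1)}\bigr)^2 \;\geq\; \tfrac{m}{4(m-1)}\Q(\Mc^{m,k}).
\]
Since $a\leq 1$ implies $a^{4/(m-1)}\leq 1$, this gives $\lambda^2\geq \tfrac{m}{4(m-1)}\Q(\Mc^{m,k})$. Taking the infimum over all such $\phi$ and $\lambda$ shows $\wlm(\Mc^{m,k})^2\geq \tfrac{m}{4(m-1)}\Q(\Mc^{m,k})$, hence by the normalization \eqref{ren_inv},
\[
\wQs(\Mc^{m,k})\;=\;4\tfrac{m-1}{m}\wlm(\Mc^{m,k})^2\;\geq\;\Q(\Mc^{m,k}).
\]

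Third, the case $(k,c)=(m-2,1)$ is not covered by Corollary \ref{hijazi_inq}, but Lemma \ref{1_wQs} gives $\wQs(\mathbb{M}_1^{m,m-2})=\Q(\mS^m)$, and Lemma \ref{1_QS} gives $\Q(\mathbb{M}_1^{m,m-2})=\Q(\mS^m)$, so equality holds and the inequality is trivial here.

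Finally, the second statement follows by taking the infimum over $c\in[0,1]$ of the pointwise inequality $\wQs(\Mc^{m,k})\geq \Q(\Mc^{m,k})$. The main (minor) obstacle is simply recognizing that the excluded pair $(k,c)=(m-2,1)$ must be treated by the direct removal-of-singularities arguments of Section \ref{c=1} rather than by the Hijazi argument; everything else is a clean rescaling applied to Corollary \ref{hijazi_inq}.
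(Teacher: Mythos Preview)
Your proof is correct and follows essentially the same approach as the paper's: apply Corollary~\ref{hijazi_inq} for all $(k,c)$ except $(m-2,1)$, and handle that exceptional case via the explicit computations in Section~\ref{c=1}. You have simply made explicit the rescaling step (needed because competitors for $\wlm$ satisfy $\|\phi\|_q\leq 1$ rather than $=1$) that the paper leaves to the reader under the phrase ``follows immediately from Corollary~\ref{hijazi_inq} and the definition of $\wQs$,'' and you additionally invoke Lemma~\ref{1_QS} alongside Lemma~\ref{1_wQs} in the exceptional case, which makes the conclusion there more transparent.
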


\begin{proof}  For $k<m-2$ or $k\leq m-2$ and $c<1$ this follows immediately from Corollary~\ref{hijazi_inq} and the definition of $\wQs$. The remaining case, $k=m-2$ and $c=1$, was treated in Lemma~\ref{1_wQs}.
\end{proof}

\begin{remark} In the case $k=m-2$ we obtain together with \cite[Lem.~3.8]{ADH} that
  $\wQs(\Mc^{m,m-2})\geq \Q(\Mc^{m,m-2})\geq  c^\frac{2}{m} \Q(\mS^m)$.
  For a test function $v\in C^\infty(\Mc^{m,m-2})$ that is constant along the $S^1$  one can calculate (since the scalar curvature of $S^1$ is zero) that 
  \[\mathcal{F}^{\Mc^{m,m-2}}(v)= c^\frac{2}{m}\mathcal{F}^{\mathbb{M}_1^{m,m-2}}(v).\] Since  $\Q(\mathbb{M}_1^{m,m-2})=\Q(\mS^m)$ is minimized by a $v$ that is constant along $S^1$, we have $\Q(\Mc^{m,m-2})=  c^\frac{2}{m} \Q(\mS^m)$. Thus, together we obtain
  $$\wQs(\Mc^{m,m-2})\geq \Q(\Mc^{m,m-2})=  c^\frac{2}{m} \Q(\mS^m).$$ 
  In particular, $\sLa_{m,m-2}=0$.
\end{remark}

\section{Minimizer of the variational problems}

The Euler-Lagrange equations of the constants $\Q$ and $\lm$ defined via functionals read as
\[ L u=\Q u^{p-1}\ \text{with\ } \Vert u\Vert_{p=\frac{2m}{m-2}}=1\]
and 
\[ D\phi = \lm |\phi|^{q-2}\phi\ \text{with\ } \Vert \phi\Vert_{q=\frac{2m}{m-1}}=1.\]
Assume now such minimizing solutions $u\in H_1^2 \cap L^\infty$ and $ \phi\in  H_1^{\frac{2m}{m+1}}\cap L^\infty$ exist. Then, we also have $u\in C^2$ and $\phi\in C^1$. Then, $\wQ\leq \Q$. Moreover, by interpolation $\phi\in L^2$ and, thus, $\wQs \leq \Qs$.\\

We now recall some theorems for the existence of such solutions on {\it almost homogeneous} manifolds $(M,g)$, i.e., on Riemannian manifolds on which there is a relatively compact
 set $U \subset\subset M$ such that for all $x \in M$ there is an isometry $f\colon M \to M$ with $f (x) \in U$. Note that a manifold is almost homogeneous if and only if the isometry group $G={\rm Isom}(M,g)$ acts cocompactly on $M$. This follows since the distance between the orbits on $M$ defines a metric on $M/G$, and the induced topology is the quotient topology of $\pi\colon M\to M/G$. 

\begin{theorem}\cite[Thm.~13]{NG3}\label{NG3}
 Let $(M^m , g)$ be a Riemannian manifold of bounded geometry with $\scal_M \geq {\rm const} > 0$ for a constant
$c$ and $ \Q(M, g)< \Q(\mS^m)$. Let $(M,g)$ be almost homogeneous. 
Then, there is a positive smooth solution $u\in H_1^2 \cap L^\infty\cap C^2$ of the Euler-Lagrange equation
$Lu = \Q(M)u^{\frac{m+2}{m-2}}$ and $\Vert u \Vert_{\frac{2m}{m-2}} = 1$.
\end{theorem}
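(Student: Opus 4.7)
The plan is to find the minimizer by the direct method, using the almost homogeneity to compensate for the non-compactness of $M$. First I would take a minimizing sequence $(u_i)\subset H_1^2(M)\cap C_c^\infty(M)$ for $\mathcal{F}$ with $\Vert u_i\Vert_p=1$ and $\mathcal{F}(u_i)\to \Q(M,g)$. Since $\scal^g\geq s_0>0$ and $a=4\frac{m-1}{m-2}$, the identity $\mathcal{F}(u_i)=a\Vert \na u_i\Vert_2^2+\int_M \scal^g u_i^2\,\vo_g$ gives a uniform bound
\[ a\Vert \na u_i\Vert_2^2+s_0\Vert u_i\Vert_2^2\leq \mathcal{F}(u_i)+o(1),\]
so that $(u_i)$ is bounded in $H_1^2(M)$. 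Without the positivity of the scalar curvature this step would fail, which is why the hypothesis is needed.

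Next I would exploit almost homogeneity to prevent the $L^p$-mass from escaping to infinity. By bounded geometry and $u_i\in L^p$ with $\Vert u_i\Vert_p=1$, one can pick a point $x_i\in M$ at which a fixed amount of $L^p$-concentration is realised, e.g.\ $\int_{B_1(x_i)}|u_i|^p\,\vo_g\geq c>0$, where $c$ is a uniform constant depending only on the geometry bounds. Using the hypothesis that ${\rm Isom}(M,g)$ acts cocompactly with fundamental domain $U$, one finds an isometry $f_i$ of $M$ with $f_i(x_i)\in U$; replacing $u_i$ by $u_i\circ f_i^{-1}$ leaves $\mathcal{F}(u_i)$ and $\Vert u_i\Vert_p$ unchanged, but now the mass is concentrated near the relatively compact set $U$. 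After this normalisation one can pass to a subsequence converging weakly in $H_1^2$, strongly in $L^2_{\rm loc}$ and $L^p_{\rm loc}$ for $p<\frac{2m}{m-2}$, and a.e.\ to some nonnegative limit $u\in H_1^2(M)$.

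The main obstacle is to show that $u\not\equiv 0$, i.e.\ to rule out $L^p$-concentration at a point, which is the classical Aubin-type difficulty for the Yamabe problem. Here one uses the strict inequality $\Q(M,g)<\Q(\mS^m)$: otherwise a concentration profile analysis (blow-up around a point where the $L^p$-mass concentrates) would yield a nontrivial limit on $\mR^m$ after rescaling, solving the Yamabe equation on $\mR^m$ with best constant $\Q(\mS^m)$ by Aubin's classification, and a standard energy decomposition (Brezis--Lieb, Lions's concentration-compactness lemma I or II adapted to the Riemannian setting of bounded geometry, cf.\ the arguments in \cite{Hebey_book}) would force $\mathcal{F}(u_i)\to \Q(\mS^m)$, contradicting the hypothesis. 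Together with the $L^p_{\rm loc}$-convergence on the compact region $U$ where mass was retained after the isometry step, this rules out vanishing and concentration, so $u$ is a nontrivial nonnegative minimiser.

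Finally, by weak lower semicontinuity of $\mathcal{F}$ and an elementary rescaling $u\mapsto u/\Vert u\Vert_p$ one obtains a nonnegative minimiser $u\in H_1^2$ with $\Vert u\Vert_p=1$ solving the Euler--Lagrange equation $Lu=\Q(M,g)u^{p-1}$ weakly. Standard elliptic bootstrapping on manifolds of bounded geometry (Theorem~\ref{est}, inner $L^s$-estimates, then Schauder) promotes $u$ to $L^\infty\cap C^2$, and the strong maximum principle applied to $Lu+\mathrm{const}\cdot u\geq 0$ using $\scal^g>0$ gives $u>0$ everywhere, yielding the claimed smooth positive minimiser.
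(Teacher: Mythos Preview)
The paper does not contain its own proof of this statement: Theorem~\ref{NG3} is quoted from \cite[Thm.~13]{NG3} as a black box, with only the added remark afterwards that $u\in C^2$, though not stated explicitly in that reference, follows from standard elliptic regularity. So there is nothing in the present paper to compare your sketch against.

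Your outline---bound a minimizing sequence in $H_1^2$ via $\scal\geq s_0>0$, recenter using the cocompact isometry action, exclude bubbling by the strict inequality $\Q(M,g)<\Q(\mS^m)$, then bootstrap and apply the maximum principle---is the standard direct approach for results of this type and is certainly close in spirit to what \cite{NG3} does. Judging from the proof of the companion spinorial result Theorem~\ref{product_NG12} given later in this paper (which follows \cite{NG12}), the original argument may proceed via subcritical approximation $s\nearrow p$ rather than concentration-compactness directly at the critical exponent, but both routes are well established and lead to the same conclusion.
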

In the reference of this reference, $u\in C^2$ was not explicitly stated, but follows from standard elliptic regularity theory.

\begin{cor}\label{wQ_leq_Q}
 Let $(m-k-1)(m-k-2)>c^2(k+1)k$, $0\leq c<1$ or let $k\leq m-3$ and $c=1$. 
 Then, 
\[\wQ(\Mc^{m,k})\leq \Q(\Mc^{m,k}).\]
\end{cor}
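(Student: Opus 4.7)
The plan is to apply Theorem~\ref{NG3} directly to the model space $\Mc^{m,k}$ to obtain a minimizing solution of the Yamabe equation, and then to read off $\wQ\leq \Q$ from the definition of $\wQ$. I would split the argument according to the two alternatives in the hypothesis.

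For $c\in[0,1)$ with $(m-k-1)(m-k-2)>c^2(k+1)k$ I would verify the hypotheses of Theorem~\ref{NG3} for $(\Mc^{m,k},g_c)$ as follows. The space is a Riemannian product of two homogeneous spaces, hence itself homogeneous, and therefore of bounded geometry and, in particular, almost homogeneous. Its scalar curvature is the constant $(m-k-1)(m-k-2)-c^2k(k+1)$, which is strictly positive by the assumption, so $\scal\geq \mathrm{const}>0$ holds trivially. Granting the strict inequality $\Q(\Mc^{m,k})<\Q(\mS^m)$ (discussed below), Theorem~\ref{NG3} then produces a positive $u\in H_1^2\cap L^\infty\cap C^2$ with $Lu=\Q(\Mc^{m,k})\,u^{(m+2)/(m-2)}$ and $\Vert u\Vert_{p}=1$. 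Since $H_1^2$-functions are in $L^2$ by definition, $u$ is a valid element of $\Omega^{(1)}(\Mc^{m,k},g_c)$ with $\mu_u=\Q(\Mc^{m,k})$, and hence $\wQ(\Mc^{m,k})\leq \mu_u=\Q(\Mc^{m,k})$.

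The remaining case $c=1$, $k\leq m-3$ is immediate from earlier results of the paper and does not require Theorem~\ref{NG3}. By Lemma~\ref{1_QS} we have $\Q(\mathbb{M}_1^{m,k})=\Q(\mS^m)$, and by Lemma~\ref{1_wQ} (exactly under the hypothesis $k\leq m-3$) also $\wQ(\mathbb{M}_1^{m,k})=\Q(\mS^m)$, so in fact equality holds and the assertion $\wQ\leq \Q$ is trivial.

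The main obstacle is the strict inequality $\Q(\Mc^{m,k})<\Q(\mS^m)$ in the first case, since this is the one genuinely nontrivial hypothesis of Theorem~\ref{NG3}. The natural strategy is to exhibit a compactly supported test function whose $\mathcal{F}$-value lies strictly below $\Q(\mS^m)$ by means of an Aubin-type concentration argument on $\Mc^{m,k}$. The relevant geometric input is that the total scalar curvature $(m-k-1)(m-k-2)-c^2k(k+1)$ at any point is strictly smaller than the spherical value $m(m-1)$ whenever $c<1$ and $k\geq 1$ (and the manifold carries a nontrivial product structure unavailable on $\mS^m$), so that a suitably scaled spherical bubble transplanted to $\Mc^{m,k}$ picks up a strictly negative correction. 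Carrying out this bubble computation carefully on the product of a rescaled hyperbolic space with a sphere is where the bulk of the effort would lie.
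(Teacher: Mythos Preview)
Your overall structure---use Theorem~\ref{NG3} for $c<1$ and invoke Lemmata~\ref{1_QS} and~\ref{1_wQ} for $c=1$---matches the paper's approach exactly. The $c=1$ case is handled correctly. However, there are two genuine gaps in your treatment of the case $c\in[0,1)$.

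First, you overlook the case $k=0$. For $k=0$ the model space is $\Mc^{m,0}=\mR\times\mS^{m-1}$, which is conformally flat (indeed conformal to $\mS^m$ minus two points), and one has $\Q(\Mc^{m,0})=\Q(\mS^m)$. Hence the strict inequality hypothesis of Theorem~\ref{NG3} fails, and the theorem cannot be applied. The paper handles this separately: since $\Mc^{m,0}$ is isometric to $\mathbb{M}_1^{m,0}$ (the one-dimensional factor carries no curvature), Lemma~\ref{1_wQ} gives equality directly.

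Second, your proposed mechanism for the strict inequality $\Q(\Mc^{m,k})<\Q(\mS^m)$ when $k\geq 1$ is not correct. Comparing scalar curvatures is not what drives Aubin's test-function expansion: flat $\mR^m$ has scalar curvature $0<m(m-1)$ yet $\Q(\mR^m)=\Q(\mS^m)$. The actual correction term in the bubble expansion involves the Weyl tensor. The paper argues that for $k>0$ and $c<1$ the space $\Mc^{m,k}$ is not locally conformally flat, so Aubin's inequality (Remark~\ref{inv_sphere}) gives the strict bound directly when $m\geq 6$. For $m<6$ Aubin's argument does not apply, and the paper instead invokes a separate result (the theorem from \cite{ammann.grosse:p15} stated immediately after the corollary). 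Your sketch does not cover this low-dimensional range.
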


\begin{proof} For, $k\leq m-3$ and $c=1$ we even have equality by Lemmata~\ref{1_QS} and~\ref{1_wQ}.
For $k=0$, $\Mc^{m,0}$ is diffeomorphic to $\mathbb{M}_1^{m,0}$. Thus, by Lemma~\ref{1_wQ} we even have equality. Let now
 $k>0$. Then the condition $c<1$ implies that $\Mc^{m,k}$ is not conformally flat. In the case $m\geq 6$ Aubin's inequality, cp. Remark~\ref{inv_sphere}, provides $\Q(\Mc^{m,k})<\Q(\mS^m)$. For $m<6$ we have $\Q(\Mc^{m,k})<\Q(\mS^m)$ by the following theorem. Thus Theorem~\ref{NG3} implies the existence of a solution $u$ as above, which directly implies the corollary.
\end{proof}

\begin{theorem}\cite[Cor.~1]{ammann.grosse:p15}
 Let $m=n+k+1$, $m\geq 3$, $k>0$,  and $c\in [0,1)$. Then
\[\Q(\mS^n\times \mH_c^{k+1},\sigma^n+g_c)<\Q(\mS^m, \sigma^m).\]
\end{theorem}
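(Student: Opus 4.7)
The plan is to adapt the classical Aubin--Schoen test-function method to the noncompact product manifold $\mS^n\times \mH_c^{k+1}$. First I would pick an arbitrary point $p=(x_0,y_0)\in \mS^n\times \mH_c^{k+1}$ and introduce Riemannian normal coordinates $(z^1,\ldots,z^m)$ centered at $p$ on a geodesic ball $B_\rho(p)$. On this ball I use the standard Aubin--Talenti bubble
\[
u_\epsilon(z)=\eta(|z|)\,\bigl(\epsilon+|z|^2\bigr)^{-(m-2)/2},
\]
where $\eta$ is a compactly supported cutoff equal to $1$ near the origin. Since compactly supported functions are admissible test functions for $\Q$, proving $\cF(u_\epsilon)<\Q(\mS^m)$ for some small $\epsilon$ would be enough.

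Next I would expand $\cF(u_\epsilon)$ as $\epsilon\to 0$. The leading term is exactly $\Q(\mS^m)$, since the Aubin--Talenti function realises the extremal Euclidean Sobolev inequality and normal coordinates flatten the metric to leading order. The subleading term encodes the local geometry at $p$: for $m\geq 6$ the classical computation gives a contribution of the form $-c_m|W(p)|^2\,\epsilon^2+o(\epsilon^2)$ with $W$ the Weyl tensor and $c_m>0$; for $m\in\{3,4,5\}$ it is instead controlled by a Green's-function mass-like quantity.

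For $m\geq 6$ the strict negativity follows once we know $W(p)\neq 0$. Both factors $(\mS^n,\sigma^n)$ and $(\mH_c^{k+1},g_c)$ have constant sectional curvatures $+1$ and $-c^2$, respectively. A direct computation of the Weyl tensor of a Riemannian product of constant-curvature factors shows it vanishes identically only when the two sectional curvatures are opposites, i.e.\ when $c=1$ (which is consistent with $\mathbb{M}_1^{m,k}$ being conformal to $\mS^m\setminus \mS^k$ as recorded in Section~\ref{mod_sp}). Hence for $c\in[0,1)$ one has $|W(p)|^2>0$ at every point, producing the required strict inequality.

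The main obstacle is handling the low-dimensional range $m\leq 5$, where the Weyl-based argument is unavailable. Here one typically needs a Schoen-type positive mass argument, which in our noncompact setting requires analysing the Green's function of the conformal Laplacian on $\mS^n\times \mH_c^{k+1}$ carefully. Fortunately only finitely many triples $(m,n,k)$ with $n,k\geq 1$ and $m=n+k+1\leq 5$ arise, so a case-by-case analysis using the explicit constant scalar curvature $n(n-1)-c^2k(k+1)$ and the product structure should be feasible. The heuristic is that the positive curvature contribution from $\mS^n$ should dominate and yield a strictly negative correction, producing $\cF(u_\epsilon)<\Q(\mS^m)$ and hence the claim.
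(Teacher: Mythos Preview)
The paper does not prove this theorem; it is quoted from the external reference \cite{ammann.grosse:p15} (whose title already signals the method: a positive mass theorem for asymptotically hyperbolic manifolds). In the present article the result is invoked only for $m<6$; the range $m\geq 6$ is handled separately via Aubin's inequality (Remark~\ref{inv_sphere}), which is exactly your Weyl-tensor expansion. So the part of your proposal that is actually a proof reproduces a case the paper does not need the theorem for, while the part the theorem is really needed for --- $m\leq 5$ --- you do not prove.

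For $m\leq 5$ you correctly identify that a Schoen-type mass argument is required, but ``should be feasible'' and ``the heuristic is'' are not a proof. The actual argument in \cite{ammann.grosse:p15} requires establishing a positive mass theorem on manifolds asymptotic to $\mH_c^{k+1}$, constructing the Green's function of $L$ on the noncompact product, and extracting a strictly positive mass term from its asymptotic expansion; this is substantial analysis, not a finite case check on the constant $\scal=n(n-1)-c^2k(k+1)$. One further caveat on your $m\geq 6$ step: your Weyl-tensor claim (``vanishes only when $c=1$'') holds only when both factors have dimension at least $2$. For $n=1$ the product $\mS^1\times\mH_c^{k+1}$ is locally conformally flat for every $c$ (in the upper half-space model, multiplying by the square of the height variable gives a flat metric), so Aubin's argument yields nothing there; this case lies outside the range in which the paper actually applies the theorem, but it is inside the range of the theorem as stated.
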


\begin{theorem}\cite[Thm.~16]{NG12}\label{NG12}
 Let $(M^m , g)$ be a Riemannian spin manifold of bounded geometry with $\scal_M \geq C > 0$ for a constant
$C$ and $ \lm(M, g)< \lm(\mS^m)$. Let $(M,g)$ be almost homogeneous.
Moreover, assume that the Dirac operator $D$ on $M$ is invertible 
as an operator from $L^s$ to $L^s$ for $s=\frac{2m}{m+1}$.
Then, there is a positive smooth solution $\phi\in H_1^{\frac{2m}{m+1}} \cap L^\infty\cap C^1$ 
of the Euler-Lagrange equation $D\phi = \lm |\phi|^{\frac{2}{m-1}}\phi$ and $\Vert \phi \Vert_{\frac{2m}{m-1}} = 1$.
\end{theorem}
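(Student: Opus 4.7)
The plan is to adapt the direct variational approach used for the scalar analogue Theorem~\ref{NG3} to the spinor setting, following the Aubin strategy from the Yamabe problem. The principal obstruction is the lack of compactness of the critical Sobolev embedding $H_1^{q^*}\hookrightarrow L^q$ (with $q^*=\frac{2m}{m+1}$, $q=\frac{2m}{m-1}$); this is overcome by combining the strict gap $\lm(M,g)<\lm(\mS^m)$ with the cocompact action of the isometry group.

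I take a minimizing sequence $(\phi_i)\subset C_c^\infty(M,S)$ with $(D\phi_i,\phi_i)_g>0$ and $\cFs(\phi_i)\to \lm(M,g)$, normalized by $\|\phi_i\|_q=1$. From $\cFs(\phi_i)\to \lm$ together with $(D\phi_i,\phi_i)_g \leq \|D\phi_i\|_{q^*}\|\phi_i\|_q$, one gets a uniform bound on $\|D\phi_i\|_{q^*}$; the $L^{q^*}$-invertibility hypothesis then yields $\|\phi_i\|_{H_1^{q^*}}\leq C$. The Lichnerowicz bound $(D\phi_i,\phi_i)_g \geq \tfrac{C}{4}\|\phi_i\|_2^2$ furnished by $\scal_M\geq C>0$ supplies additional $L^2$-control. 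To prevent loss of mass at infinity, I invoke almost-homogeneity: choose $x_i$ maximizing $\|\phi_i\|_{L^q(B_1(\cdot))}$ and apply isometries sending $x_i$ into the precompact set $U$. After extraction, a subsequence converges weakly in $H_1^{q^*}$ and strongly in $L^{q^*}_{\rm loc}$ to some $\phi\in H_1^{q^*}\cap L^q$.

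The hard part is to show $\phi\not\equiv 0$ and that $\cFs(\phi)=\lm(M,g)$. Using a Brezis--Lieb type decomposition for $\|\cdot\|_q^q$ together with weak semicontinuity of $\|D\cdot\|_{q^*}^2$ and weak continuity of $(D\cdot,\cdot)_g$, any loss of $L^q$-mass in the limit can only occur through concentration bubbles. Rescaling around such a concentration point in normal coordinates and invoking the comparison of Dirac operators from \cite[Sect.~3 and 4]{AGH03} (exactly as in the proof of Lemma~\ref{bdd-sol}) produces a nontrivial $L^q$-solution of $D^{\mR^m}\psi=\lambda|\psi|^{q-2}\psi$ on Euclidean space. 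Via the stereographic conformal equivalence of $\mR^m$ with $\mS^m\setminus\{N\}$, the identification of spinors under conformal change as in Example~\ref{no_Linfty_bound}, and the removal-of-singularities Lemma~\ref{removal-D+}, this bubble lifts to a nontrivial solution on $\mS^m$, whose functional value is at least $\lm(\mS^m)$. Comparing the summed bubble contributions with $\cFs(\phi_i)\to\lm(M,g)<\lm(\mS^m)$ forces every such residual to vanish, so $\phi_i\to\phi$ strongly in $L^q$, $\phi\not\equiv 0$, and $\cFs(\phi)=\lm(M,g)$.

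Finally, $\phi$ weakly satisfies $D\phi=\lm(M,g)|\phi|^{2/(m-1)}\phi$ with $\|\phi\|_q=1$. Lemma~\ref{bdd-sol} gives $\phi\in L^\infty$, Lemma~\ref{0-bdd-sol} gives $\phi\in C^{1,\gamma}$, and Corollary~\ref{improved-reg} promotes this to $\phi\in C^{2,\gamma}$. Unique continuation for the Dirac operator (\cite{boosbavnbek_marcolli_wang_02}) together with the strong maximum principle applied to the nonnegative function $u=|\phi|^{(m-2)/(m-1)}$ via Proposition~\ref{spin-eq2yamabe-ineq} (which yields $Lu\leq 4\tfrac{m-1}{m}\lambda^2 u^{(m+2)/(m-2)}$) shows $|\phi|>0$ everywhere, completing the proof.
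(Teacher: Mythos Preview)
This theorem is not proved in the present paper; it is quoted from \cite[Thm.~16]{NG12}. The method of \cite{NG12} is, however, visible in the proof of the subsequent Theorem~\ref{product_NG12}: one does \emph{not} minimize the critical functional directly. Instead one introduces a radial admissible weight $\rho\leq 1$ and for each subcritical exponent $s\in[2,q)$ solves the weighted problem $D\phi_s=\lambda_s^{\alpha}\rho^{\alpha s}|\phi_s|^{s-2}\phi_s$ with $\|\rho^\alpha\phi_s\|_{L^s}=1$ (here $\alpha=\alpha(s)\to 0$ as $s\to q$). For fixed $s<q$ the relevant embedding is compact, so these solutions exist; one then lets $s\to q$ and uses the strict inequality $\lm(M,g)<\lm(\mS^m)$ together with almost-homogeneity only to show the limit is nonzero. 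Your direct concentration--compactness strategy is a genuinely different route.

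Your sketch has two real gaps. First, the claimed ``Lichnerowicz bound'' $(D\phi_i,\phi_i)_g\geq \tfrac{C}{4}\|\phi_i\|_2^2$ is false: the Schr\"odinger--Lichnerowicz formula controls $\|D\phi_i\|_2^2$, not the indefinite pairing $(D\phi_i,\phi_i)_g$. This is not fatal, but the $L^2$-control you want has to come from elsewhere (e.g.\ the $L^{q^*}$-invertibility already gives $\phi_i\in L^{q^*}$ uniformly, and interpolation with $L^q$ covers $L^2$).

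Second, and more seriously, your bubble step asserts that rescaling around a concentration point yields a \emph{solution} of $D^{\mR^m}\psi=\lambda|\psi|^{q-2}\psi$. For a mere minimizing sequence (as opposed to a Palais--Smale sequence) the rescaled limit need not satisfy any equation; the Struwe-type global compactness you invoke applies to $(PS)$-sequences. To run your argument you must either first pass to a $(PS)$-sequence (e.g.\ via Ekeland's variational principle applied to a suitable constrained functional, which is delicate here because the denominator $(D\phi,\phi)$ is sign-indefinite and the numerator $\|D\phi\|_{q^*}^2$ is not quadratic), or replace the bubble-solution step by a Lions-type measure-theoretic dichotomy that uses only the conformal invariance of $\cFs$ and the inequality $\cFs(\psi)\geq \lm(\mS^m)$ for all admissible $\psi$ on $\mR^m$. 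Either fix is substantial and is precisely what the subcritical-approximation scheme in \cite{NG12} is designed to avoid.
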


In the reference of this theorems $\phi\in L^\infty$ was not stated explicitly, but can be seen directly from the proof in \cite{NG12} or alternatively by Lemma~\ref{bdd-sol}. Moreover, in the original version of  Theorem~\ref{NG12} it was requested that $D$ is invertible for all $s\in [\frac{2m}{m+1}, \frac{2m}{m+1}+\epsilon]$ for some $\epsilon>0$. But if $D$ is invertible for $s=\frac{2m}{m+1}$, then it is also invertible for the conjugate exponent $\frac{2m}{m-1}$ and by interpolation for all $s$ in between, cp. \cite[App.~B]{ammann.grosse:p13a}.\\

For the special case of manifolds that are product spaces $M=N_1\times N_2$ of an almost homogeneous manifold $N_1$ and a closed manifold $N_2$ one can relax the assumption on the positive scalar curvature in Theorem~\ref{NG12}:

\begin{theorem}\label{product_NG12}
 Let $(M^m=N_1\times N_2 , g)$ be a Riemannian spin manifold that is a product manifold of an almost homogeneous manifold $N_1$ and a closed manifold $N_2$. Let $\lambda_{N_2}^2$ be the lowest eigenvalue of the square $(D^{N_2})^2$ of the Dirac operator on $N_2$,  and let $\lambda_{N_2}^2+ \frac{1}{4} \inf \scal_{N_1}=:c>0$. Moreover, let $ \lm(M, g)< \lm(\mS^m)$, and assume that the   Dirac operator $D$ on $M$ is invertible as an operator from $L^{q^*}$ to $L^{q^*}$ for $q^{*}=\frac{2m}{m+1}$.
Then, there is a positive smooth solution $\phi\in H_1^{\frac{2m}{m+1}} \cap L^\infty\cap C^1$ 
of the Euler-Lagrange equation $D\phi = \lm |\phi|^{\frac{4}{m-1}}$ and $\Vert \phi \Vert_{\frac{2m}{m-1}} = 1$.
\end{theorem}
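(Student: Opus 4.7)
The plan is to follow the strategy of Theorem~\ref{NG12}, replacing the uniform spectral gap produced by $\scal_M \geq C > 0$ via Schr\"odinger--Lichnerowicz by a gap coming from the product structure. The crucial ingredient to establish is, for every $\phi \in C_c^\infty(M, S_M)$, an inequality of the form
\[
  \int_M |D\phi|^2 \vo_g \;\geq\; c\, \Vert\phi\Vert_{L^2(M)}^2.
\]
Since $N_2$ is closed, $(D^{N_2})^2$ has a complete $L^2$-orthonormal basis of eigenspinors with eigenvalues $\mu_\ell^2 \geq \lambda_{N_2}^2$. Using the Clifford-algebraic identity $D_M^2 = (D^{N_1})^2 \otimes \Id + \Id \otimes (D^{N_2})^2$ valid for product spin Dirac operators, together with Schr\"odinger--Lichnerowicz on $N_1$ and a fiberwise Rayleigh quotient in the $N_2$-variable, one obtains
\[
  \int_M |D\phi|^2 \vo_g \;\geq\; \int_M |\nabla^{N_1}\phi|^2 \vo_g + \int_M \Bigl(\frac{\scal_{N_1}}{4} + \lambda_{N_2}^2\Bigr)|\phi|^2 \vo_g \;\geq\; c\,\Vert\phi\Vert_{L^2(M)}^2,
\]
which is the desired replacement for a positive lower bound on $\scal_M$.

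With this gap in hand, I would run the minimization of $\cFs$ exactly as in \cite{NG12}. Pick a minimizing sequence $(\phi_n) \subset C_c^\infty(M, S_M)$ with $\cFs(\phi_n) \to \lm(M,g)$; use almost homogeneity to translate each $\phi_n$ by an isometry of $M$ so that a fixed proportion of the $L^q$-mass stays inside a relatively compact set $U \subset M$; invoke the $L^{q^*}$-invertibility of $D$ (combined with the spectral gap) to obtain uniform $H_1^{q^*}$- and $L^2$-bounds on the translated sequence; and extract a weak $H_1^{q^*}$-limit $\phi_\infty$ solving the Euler-Lagrange equation. The strict inequality $\lm(M,g) < \lm(\mS^m)$ is used in the standard way to exclude loss of mass by concentration: a blow-up along a concentrating subsequence would, after rescaling, yield a nontrivial $L^q$-solution on $\R^m$ of the same nonlinear Dirac equation and, via the conformal identification with $\mS^m\setminus\{\mathrm{pt}\}$ together with the removable singularity Lemma~\ref{removal-D+}, force $\lm(M,g) \geq \lm(\mS^m)$, a contradiction. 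Regularity and the pointwise properties $\phi_\infty \in L^\infty \cap C^1$ follow from Lemma~\ref{bdd-sol} and Corollary~\ref{improved-reg}; unique continuation rules out $\phi_\infty \equiv 0$ on an open set.

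The main obstacle is making rigorous the substitution carried out in the first step: one must verify that every instance where the \cite{NG12} proof invokes $\scal_M \geq C > 0$ (coercivity of $\cFs$, uniform $L^2$-control on the minimizing sequence, and the bubble analysis) uses only the integrated form $\int|D\phi|^2 \geq c\Vert\phi\Vert_2^2$, which the product spectral gap above supplies. Conceptually the two hypotheses play the same role, but the book-keeping is not entirely automatic, since the original argument may implicitly use pointwise positivity of $\scal_M$ at intermediate steps, whereas here only integral positivity is available. Once that verification is complete, the concentration--compactness and regularity parts of the proof transfer verbatim.
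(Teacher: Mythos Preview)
Your central insight---that the product structure supplies a spectral gap $\lambda_{N_2}^2+\frac14\inf\scal_{N_1}=c>0$ replacing pointwise positivity of $\scal_M$---is exactly right, and your integrated inequality $\int|D\phi|^2\geq c\|\phi\|_2^2$ is correct. However, the paper does not proceed by verifying that this global $L^2$-gap can be substituted throughout the \cite{NG12} argument; nor does it use a direct minimizing sequence with concentration--compactness. Instead it follows \cite{NG12}'s subcritical approximation verbatim (producing weighted solutions $\phi_s$ of $D\phi_s=\lambda_s^\alpha\rho^{\alpha s}|\phi_s|^{s-2}\phi_s$ converging in $C^1_{\rm loc}$ to a limit $\phi$), and the \emph{only} step requiring modification is showing $\phi\not\equiv 0$. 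For that the paper uses a \emph{fiberwise pointwise} maximum principle: at the point $x_s\in N_1$ where $\int_{\{x_s\}\times N_2}|\phi_s|^2$ is maximal one has $\Delta_{N_1}\int_{\{x_s\}\times N_2}|\phi_s|^2\geq 0$; combining this with $D^2=(D^{N_1})^2+(\tilde D^{N_2})^2$, Schr\"odinger--Lichnerowicz on $N_1$, and the Rayleigh bound for $(D^{N_2})^2$ on the fiber yields $\max_{\{x_s\}\times N_2}|\phi_s|^{2(s-2)}\geq c(\lambda_s^\alpha)^{-2}$, bounded below. Almost homogeneity of $N_1$ keeps the $x_s$ in a compactum, contradicting $\phi_s\to 0$ in $C^1_{\rm loc}$.

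This is a genuine methodological difference. In \cite{NG12} the hypothesis $\scal_M\geq C>0$ is used via a pointwise maximum principle at $\sup_M|\phi_s|$, not via an integral coercivity bound; your proposed substitution of the global $L^2$-gap does not directly feed into that pointwise step. The paper's trick is to keep the maximum-principle structure but make it ``pointwise in $N_1$, integrated over $N_2$''---precisely so that the $(D^{N_2})^2$-term contributes $\lambda_{N_2}^2$ after fiber integration while the $N_1$-Laplacian still sees a genuine maximum. Your alternative route (direct minimizing sequence plus bubble analysis) may well be workable, but it is not what \cite{NG12} does, and you would still have to handle dichotomy (mass splitting and escaping to infinity), which the spectral gap alone does not prevent; the paper's argument avoids all of this.
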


\begin{proof}
We start as in the proof of the general result in  \cite{NG12} - here  we shortly recall the steps that remain the same: For that let $\rho$ be a radial admissible weight, see \cite[Sect.~A.1]{NG12}, $\rho\leq 1$. Then, by \cite[Lem.~13]{NG12}  we obtain for each $s\in [2, q=\frac{2m}{m-1})$ a sequence $\phi_s\in H_1^{s^*}\cap C^1$ with $D\phi_s=\lambda^{\alpha}_s \rho^{\alpha s} |\phi_s|^{s-2}\phi_s$ and $\Vert \rho^\alpha \phi_s\Vert_{L^s}=1$ where $s$ and $s^*$ are conjugate, $\alpha=\alpha(s)\to 0$  as $s\to q$ and $\mu:=\limsup_{s\to q} \lambda^\alpha_s\leq \lm$. Then by \cite[Lemmata~14 and 15]{NG12} a subsequence $\phi_s=\phi_{\alpha(s),s}$ converges to a function $\phi\in H_1^{s^*}$ in $C^1$-topology on each compact subset, and we have $D\phi=  \mu |\phi|^{q-2} \phi$. It remains to show that $\Vert \phi\Vert_{L^{q}}=1$, i.e., in particular that $\phi$ is nonzero. Then the arguments that $\phi$ is a solution as desired just follow again the lines of \cite[
Lemma~15]{NG12}.

We prove the remaining point by contradiction, i.e., we assume that $\phi=0$: Note that  by \cite[Lem.~33]{NG12} for each $s$ we have  $\lim_{|x|\to \infty} |\phi_s|=0$. Thus, we can fix $x_s\in N_1$ such that $\int_{\{x_s\}\times N_2} |\phi_s|^2$ attains its maximum. By the almost-homogeneity of $N_1$ we can assume that all $x_s$ are contained in a compact subset $K$ of $M$.   
Moreover, then 
\begin{align*}
0\leq& \Delta_{N_1} \int_{\{x_s\}\times N_2} |\phi_s|^2= \int_{\{x_s\}\times N_2} \Delta_{N_1}|\phi_s|^2\\
=&2 \Re \int_{\{x_s\}\times N_2} \< \nabla^*_{N_1}\nabla_{N_1}\phi_s , \phi_s\>-2 \int_{\{x_s\}\times N_2} |\nabla^{N_1}\phi_s|^2\leq 2\Re \int_{\{x_s\}\times N_2} \< \nabla^*_{N_1}\nabla_{N_1}\phi_s , \phi_s\>.
\end{align*}

Using $\rho\leq 1$, $D\phi_s=\lambda^{\alpha}_s \rho^{\alpha s} |\phi_s|^{s-2}\phi_s$ and $\< \d(\rm function)\cdot \psi, \psi\>_x\in {\rm i}\mR$ we get
\[\Re \int_{\{x_s\}\times N_2} \<D^2 \phi_s, \phi_s\> \leq (\lambda^\alpha_s)^2 \int_{\{x_s\}\times N_2} |\phi_s|^{2(s-1)}.\]

The square of the Dirac operator decomposes as $D^2=(D^{N_1})^2+(\tilde{D}^{N_2})^2$ where  $\tilde{D}^{N_2}={\rm diag} (D^{N_2}, -D^{N_2})$ in case that both manifolds are odd dimensional and $\tilde{D}^{N_2}=D^{N_2}$ else, see e.g. \cite[Sect.~2.5]{ammann.grosse:p13a}.  The operators $(\tilde{D}^{N_2})^2$ and $({D}^{N_2})^2$ have the same spectrum. 
Then, together with the Schr\"odinger-Lichnerowicz formula we obtain

\begin{align*} \int_{\{x_s\}\times N_2}\!\!\!|\tilde{D}^{N_2}\phi_s|^2 + \Re \int_{\{x_s\}\times N_2}\!\!\!\< \nabla^*_{N_1}\nabla_{N_1} \phi_s, \phi_s\> +\frac{\scal_{N_1}}{4}\int_{\{x_s\}\times N_2}\!\!|\phi_s|^2\\
\leq (\lambda^\alpha_s)^2  \int_{\{x_s\}\times N_2}\!\!\!|\phi_s|^{2(s-1)}\end{align*}

and, thus,

\[\left(\lambda_{N_2}^2 +\frac{\scal_{N_1}}{4}\right)\int_{\{x_s\}\times N_2} |\phi_s|^2   \leq (\lambda^\alpha_s)^2 \max_{\{x_s\}\times N_2} |\phi_s|^{2(s-2)} \int_{\{x_s\}\times N_2} |\phi_s|^2.\]

Hence,

\begin{align*}\left(\lambda_{N_2}^2 +\frac{\scal_{N_1}}{4}\right) (\lambda^\alpha_s)^{-2}  \leq&  \max_{\{x_s\}\times N_2} |\phi_s|^{2(s-2)}\\
c(\lm)^{-2}\leq \liminf_{s\to q}\left(\lambda_{N_2}^2 +\frac{\scal_{N_1}}{4}\right) (\lambda^\alpha_s)^{-2}  \leq& \liminf_{s\to q} \max_{\{x_s\}\times N_2} |\phi_s|^{2(s-2)}.
\end{align*}

Note that all $x_s$ are contained in a compact set. Thus, a subsequence of $x_s$ converges to some $x\in M$.
But, $\phi=0$ means that then the right hand-side is zero which gives the desired contradiction.
\end{proof}

\begin{cor}\label{wQs_leq_Qs}
 Let $(m-k-1)^2>c^2(k+1)k$
 and $\Qs (\Mc^{m,k})< \Qs (\mS^m)$. 
Then, $\wQs (\Mc^{m,k})\leq \Qs(\Mc^{m,k})$.
\end{cor}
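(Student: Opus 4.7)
The plan is to apply Theorem~\ref{product_NG12} with $N_1 := \mH_c^{k+1}$ (understood as $\mR^{k+1}$ for $c=0$) and $N_2 := \mS^{m-k-1}$, producing a solution of the Euler--Lagrange equation whose eigenvalue equals $\lm(\Mc^{m,k})$. Using this solution as a competitor in the definition~\eqref{def_solLa} of $\wlm$ would give $\wlm(\Mc^{m,k}) \leq \lm(\Mc^{m,k})$, which after applying the renormalization~\eqref{ren_inv} is the claimed inequality $\wQs(\Mc^{m,k}) \leq \Qs(\Mc^{m,k})$.

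First I verify the three hypotheses of Theorem~\ref{product_NG12}. The factor $N_1$ is a (rescaled) hyperbolic or Euclidean space, hence homogeneous and in particular almost homogeneous, and $N_2$ is closed. The smallest eigenvalue of $(D^{N_2})^2$ on $\mS^{m-k-1}$ is $\lambda_{N_2}^2 = (m-k-1)^2/4$, and $\scal_{N_1}$ is the constant $-c^2 k(k+1)$, so the positivity requirement $\lambda_{N_2}^2 + \tfrac14 \inf\scal_{N_1} > 0$ is precisely the assumption $(m-k-1)^2 > c^2 k(k+1)$. The strict inequality $\lm(\Mc^{m,k}) < \lm(\mS^m)$ follows from $\Qs(\Mc^{m,k}) < \Qs(\mS^m)$ via~\eqref{ren_inv} since both $\lm$'s are positive. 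Finally, Proposition~\ref{inv_Mc} gives $L^{q^*}$-invertibility of $D$ (with $q^* = 2m/(m+1)$) provided $\tfrac{m-k-1}{2} > \tfrac{ck}{2m}$, i.e.\ $m(m-k-1) > ck$; this follows from the main assumption because $\sqrt{k(k+1)} > k$ for $k \geq 1$, so $m-k-1 > c\sqrt{k(k+1)} \geq ck$, and the borderline cases $k = 0$ or $c = 0$ are immediate from $k \leq m-2$ (which the hypothesis forces).

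Theorem~\ref{product_NG12} then yields $\phi \in H_1^{q^*} \cap L^\infty \cap C^1$ with $D\phi = \lm(\Mc^{m,k})\,|\phi|^{q-2}\phi$ and $\|\phi\|_{L^q} = 1$. To conclude, I just need $\phi \in L^2$, which is immediate from interpolation: the embedding $H_1^{q^*} \hookrightarrow L^{q^*}$ gives $\phi \in L^{q^*}$, and combined with $\phi \in L^\infty$ and the ordering $q^* < 2 < \infty$, log-convexity of $L^s$-norms places $\phi$ in every $L^s$ with $s \in [q^*, \infty]$. So $\phi$ is admissible in~\eqref{def_solLa}, finishing the argument. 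The only potentially delicate step is the algebra showing $m(m-k-1) > ck$ follows from the hypothesis, but this is elementary; everything else is a straightforward invocation of Theorem~\ref{product_NG12} and Proposition~\ref{inv_Mc}.
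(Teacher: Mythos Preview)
Your proof is correct and follows essentially the same approach as the paper: both apply Theorem~\ref{product_NG12} with $N_1=\mH_c^{k+1}$, $N_2=\mS^{m-k-1}$, noting that $\lambda_{N_2}^2=(m-k-1)^2/4$ and $\scal_{N_1}=-c^2k(k+1)$ turn the positivity hypothesis into the assumed inequality. You are more thorough than the paper's terse proof in that you explicitly verify the $L^{q^*}$-invertibility of $D$ via Proposition~\ref{inv_Mc} and spell out the interpolation argument giving $\phi\in L^2$ (which the paper relegates to the paragraph preceding Theorem~\ref{NG3}); these details are all correct.
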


\begin{proof} This corollary follows from Theorem~\ref{product_NG12}. Set $N_1=\mH_c^{k+1}$ and $N_2=\mS^{m-k-1}$. Then $\lambda_{N_2}^2=\frac{(m-k-1)^2}{4}$ and $\scal_{N_1}=-c^2k(k+1)$.
\end{proof}

\begin{remark}
 Note that using Theorem~\ref{NG12} would lead to the condition $(m-k-1)(m-k-2)>c^2k(k+1)$. Thus, here Theorem~\ref{product_NG12} gives a better result.
\end{remark}

\section{\texorpdfstring{ $\Lambda$-invariants for $k=m-3$}{Lambda-invariants for k=m-3}}\label{km3}

For applying the surgery monotonicity formulas, cf. Theorems~\ref{surgthm} and~ \ref{surgspinthm}, one needs explicit positive lower bounds for $\La_{m,k}$ in dimension $k\leq m-3$ and for $\Las_{m,k}$ in dimension $k\leq m-2$. 
In the case $k\leq m-4$ and in  the case $k+3= m\leq6$ explicit positive lower bounds for $\La_{m,k}$ were provided in \cite{ammann.dahl.humbert:p12}. Using $\Las_{m,k}\geq \La_{m,k}$, cp. Corollary~\ref{Qs_leq_wQs}, Proposition~\ref{wLas_leq_Las} and \cite[Thm.~3.1 and Cor.~3.2]{ammann.dahl.humbert:p11b}, this yields explicit positive lower bounds for $\Las_{m,k}$ in these cases. However the techniques we have developed in the previous sections provide also explicit positive lower bounds for 
 $\sLa_{m,m-3}$ for any $m> 6$ which is the subject of the present section.
 
 Note that by Corollaries \ref{Q_leq_wQs} and~\ref{Qs_leq_wQs} and by Proposition~\ref{La_leq_wLa} we have
$\wLas_{m,m-3}=\sLas_{m,m-3}\geq \sLa_{m,m-3}$ and $\wLa_{m,m-3}\geq \sLa_{m,m-3}$. Thus, any lower bound for $\sLa_{m,m-3}$ is also a lower bound for the other three invariants. In Section \ref{surg_theorems} we pointed out that $\Las_{m,m-3}=\sLas_{m,m-3}$.  

As a first step we estimate $Q_c$:
It was derived in \cite[Thm.~4.1]{ammann.dahl.humbert:p12} 
for all $c\in (0,1)$ that
\begin{align} \label{est_general}
Q_c
\geq &
\left( 
\frac{Q_0}{Q_1} - 
\frac{c^2 (k+1)k}{(1-c^2)(m-k-1)(m-k-2) + c^2 (k+1)k}
\left(\frac{Q_0}{Q_1} -c^{\frac{2(m-k-1)}{m}}\right)
\right) Q_1 
\end{align}
where we defined $Q_c:=\Q(\Mc^{m,k})$.
On the other hand it follows for $m\geq 6$ and $k=m-3$ 
from  \cite[Sec.~4.1 (iv)]{ammann.dahl.humbert:p12} that
\begin{align*}
Q_0 \geq 
\frac{m a}{ 24^{\frac{3}{m}} ((m-3) a_{m-3})^{\frac{m-3}{m}}} 
\Q(\mS^{m-3})^{\frac{m-3}{m}} 
\Q(\mS^3)^{\frac{3}{m}}=: \widehat{Q}_0.
\end{align*}

Inequality~\eqref{est_general} reads for $k=m-3\geq 3$ as
\begin{align*} 
Q_c \geq &
\left( 
\frac{Q_0}{Q_1} - 
\frac{c^2 (m-2)(m-3)}{(1-c^2)2 + c^2 (m-2)(m-3)}
\left(\frac{Q_0}{Q_1} -c^{4/m}\right)
\right) Q_1\\
\geq &
\frac{(1-c^2)2\widehat{Q}_0 +c^{2+4/m} (m-2)(m-3)Q_1}{(1-c^2)2 + c^2 (m-2)(m-3)} 
=:L_m(c^2).
\end{align*}

Since $\widehat{Q}_0$ and $Q_1=\Q(\mS^m)$ are known explicitly one can compute the infimum of $L_m(c^2)$ numerically for fixed $m$, see Table~\ref{tab:codim3} for some explicit values.

\begin{table}
  \centering
  \begin{tabular}{c||c|c|c|c|c|c|c|c|c}
    $m$ &  7& 8& 9& 10 & 11&12&13&14&15\\
    \hline
    $\Q(\mS^m)$ & 
    113.5 & 130.7 & 147.88 & 165.0& 182.2 &199.3 &216.4&233.5& 250.6\\
    \hline
    $L_{m,m-3}$  & 
    65.2 & 78.7 & 91.8 & 104.9& 118.1 &131.5& 145.0 &158.6&172.4
  \end{tabular}
\\[1mm]
  \caption{Some explicit values for $L_{m,m-3}:=\inf_{c\in [0,1]} L_m(c^2)$ -- a lower bound for $\sLa_{m,m-3}$, $\wLa_{m,m-3}$ and $\Las_{m,m-3}$. The values are rounded -- $\Q(\mS^m)$ is rounded to the nearest multiple of $1/10$ and $L_{m,m-3}$ is always rounded down.}
  \label{tab:codim3}
\end{table}

For general $m$ we can still estimate the infimum of $L_m(c^2)$: 

\begin{align*}
\frac{\d}{\d s} L_m(s)=& \frac{-2\widehat{Q}_0 +(1+2/m) s^{2/m} (m-2)(m-3)Q_1}{2 + s((m-2)(m-3)-2)}\\ &-\frac{((1-s)2\widehat{Q}_0 +s^{1+2/m} (m-2)(m-3)Q_1)\left[(m-2)(m-3)-2\right] }{(2 + s((m-2)(m-3)-2))^2}. 
\end{align*}
The condition 
$\frac{\d}{\d s} L(s)=0$ is equivalent to 
\begin{align*}
 f(s):=\frac{\left(2 + s((m-2)(m-3)-2)\right)^2}{(m-2)(m-3)}\frac{\d}{\d s} L(s)= s^{\frac{2}{m}+1}A_0+s^{\frac{2}{m}}A_1+A_2=0
\end{align*}
where $A_0=\frac{2}{m}Q_1((m-2)(m-3)-2)$, $A_1=2(\frac{2}{m}+1)Q_1$, and $A_2=-2\widehat{Q}_0$. There is at least one zero in the interval $(0,1)$ since $f(0)<0$ and $f(1)>0$ (since $A_0>0$ and $Q_1\geq \widehat{Q}_0$). Let $F(u)=f(u^m)=u^{m+2}A_0+u^2A_1+A_2$, and let $u_0\in (0,1)$ be a zero of $F$. Then 
\begin{align*} 
 \frac{F(u)}{u-u_0}= u^{m+1}A_0+u^mA_0u_0+\ldots+uA_0u_0^m+uA_1+A_0u_0^{m+1}+A_1u_0.
\end{align*}
But for positive $u$ the last polynomial is always positive. Thus, there is only one zero of $f$ in the interval $(0,1)$.  

Moreover, $f(s)> s^{\frac{2}{m}}A_1+A_2$. Thus, $f(c_2^2)>0$ where $c_2:=\left(\frac{-A_2}{A_1}\right)^{\frac{m}{2}}= \left(\frac{m\widehat{Q}_0}{(m+2)Q_1}\right)^{\frac{m}{2}}$.  Hence, 

\begin{align*}
\sLa_{m,m-3}=\inf_{c\in [0,1]} Q_c \geq &  \inf_{c\in [0,c_2]} L_m(c^2)\\
 \geq &\inf_{c\in [0,c_2]} \frac{(1-c^2)2\widehat{Q}_0 +c^{4/m+2} (m-2)(m-3)Q_1}{2 + c_2^2 ((m-2)(m-3)-2)}. 
\end{align*}

Since $(1-c^2)2\widehat{Q}_0 +c^{4/m+2} (m-2)(m-3)Q_1$ attains its minimum for  $c_3^\frac{4}{m}=\frac{2m\widehat{Q}_0 }{(m+2)(m-2)(m-3)Q_1}$ we obtain

\begin{align*}
 \sLa_{m,m-3}\geq&\frac{(1-c_3^2)2\widehat{Q}_0 +c_3^{4/m+2} (m-2)(m-3)Q_1}{2+ c_2^2 ((m-2)(m-3)-2)}.
\end{align*}

Together with the explicit positive lower bounds 
$\sLa_{3,0}=\Q(\mS^3)=6\cdot 2^{2/3}\pi^{4/3}= 43,823233...$,
$\sLa_{4,1}> 38.9$ \cite[Example~4.7]{ammann.dahl.humbert:p11b}, and
$\sLa_{5,2}> 45.1$ \cite[Example~4.10]{ammann.dahl.humbert:p11b} 
we obtain explicit positive lower bounds for all $\sLa_{m,m-3}$.

These methods obviously also yield explicit positive lower bounds for all 
$\wQs (\mathbb{H}^{m-2}\times c\mS^2)$.

\section{Bordism arguments}\label{sec.bordism.arg}

\begin{proof}[ Proof of Proposition~\ref{sis.bound}]
By a theorem of Stolz, \cite[Thm.~B]{stolz}, $M$ is spin-bordant to the total space $M_0$ of an $\mH P^2$-bundle over a base $Q$ for 
which the structure group is $PSp(3)$. In particular, in dimension $m=5,6, 7$ this implies that $Q=\varnothing$. Each manifold $M$ in dimension $m=5,6,7$ is a spin boundary. 

Moreover, by the extended Stolz theorem \cite[Prop.~6.5]{ammann.dahl.humbert:p11b} we can assume that $Q$ is connected if $m\geq 9$ and that $Q$ is simply connected if $m\geq 11$. Thus, in dimension $m\geq 11$ also $M_0$ can be chosen to be connected and simply connected. In case that $Q=\varnothing$, set $M_1:=\mS^m$ otherwise $M_1:=M_0$. 

First, let $m=5,6,7$ or $m\geq 11$. As $M_1$ is simply connected  $M$ can be obtained from $M_1$ by a sequence of surgeries of dimensions $\ell$ where $2\leq \ell \leq m-3$, see \cite[Prop.~5.1]{ammann.dahl.humbert:p12}. Using Theorem~\ref{surgthm} this implies that $\sis(M)\geq \min\{ \sis(M_1), \Las_{m}\}$. 

In the case $m=5,6,7$ and in the case $m\geq 8$ and $Q=\varnothing$ we use $\Las_{m,k}=\sLas_{m,k}\leq \sis(\mS^m)$ for $k\leq m-2$, see Theorem~\ref{theo3.1}, and obtain
$\sis(M)\geq \Las_{m}$. For $m\geq 11$ and $Q\neq \varnothing$, i.e. $M_0=M_1$, we use the conformal Hijazi inequality for $M_1$ and  $\si(M_1)\geq \Q(\mH P^2\times \mathbb{R}^{m-8})$ for $m\geq 8$, see \cite{Streil}, where $\mH P^2\times \mathbb{R}^{m-8}$ carries the product metric of the standard metrics of both factors. Then we obtain 
\[ \sis(M)\geq \min\{ \Las_{m}, \Q(\mH P^2\times \mathbb{R}^{m-8})\}. \]

In dimension $m=8$ and $Q\neq \varnothing$, $M_1$ is a disjoint sum  of copies of $\mH P^2$, possibly with reversed  orientation. Using $0$-dimensional surgeries $M_1$ is spin bordant to the connected and simply connected manifold $M_2:=\mH P^2\# \ldots \# \mH P^2$, possibly with reversed orientation. Thus, $\sis(M_2)\geq \sis(M_1)=\sis (\mH P^2)\geq Q^*(\mH P^2)$. As above $M$ can then be obtained from $M_2$ by   surgeries of dimensions $\ell$ where $2\leq \ell \leq m-3$.
Using again Theorem~\ref{surgthm} this implies that $\sis(M)\geq \min\{\Las_{m}, \sis(M_2)\}\geq \min\{ \Las_m, Q^*(\mH P^2)\} $.

Let now $m=9,10$ and $Q\neq \varnothing$. Then, $M$ can be obtained from $M_1$ by surgeries of dimensions $\ell$ where $1\leq \ell \leq m-3$.
Using  Theorem~\ref{surgthm} this implies that $\sis(M)\geq \min\{\Las_{m,1}, \Las_{m}, \sis(M_1)\}$. Similar to above we get  \[\sis(M)\geq \min\{\Las_{m,1}, \Las_{m},  \Q(\mH P^2\times \mathbb{R}^{m-8})\}.\]
\end{proof}

The analogous statement for non-simply connected manifolds mentioned after Proposition~\ref{sis.bound} is proven analogously.

\begin{proof}[Proof of Proposition~\ref{sis.bound.fund}]
Let $(W,F)$ be the spin bordism from $(M, c_M)$ to $(N,f)$.
First, we use $0$-dimensional surgery in order to make $W$ connected. We will abuse the notation and also denote the bordism after surgery $(W,F)$. Note that $\Gamma$ is always finitely presented. Then, again by $0$-dimensional surgery, we change $W$ and $F$ such that $F$ induces a surjection on $\pi_1$.
Next, we use $1$-dimensional surgery such that the resulting $F$ induces an injection on $\pi_1$. 

As a consequence, the resulting map $F$ induces a bijection on $\pi_1$.  Next, we use $2$-dimensional surgeries to kill $\pi_2(W,M)$, cp. \cite[Proof of Prop.~2.1.1]{hebestreit_joachim}. This can always be achieved as every element of $\pi_2(W,M)$ then comes from an element in $\pi_2(W)$ and thus can be represented by an embedded $S^2$. The condition that $W$ is spin implies that this embedded $S^2$ has trivial normal bundle. Then, the embedding $M\hookrightarrow W$ is $2$-connected and $N\hookrightarrow W$ is $1$-connected. 
Thus, we can obtain $N$ from $M$ by attaching handles of dimensions $\ell$ with  $2\leq \ell \leq m-2$. Together with Theorem~\ref{surgthm} we then obtain the claim.
\end{proof}

\appendix
\section{Weak partial differential inequalities}

In this appendix, we recall the connection between viscosity solutions and distributional solutions of 
weak partial differential inequalities. All functions in this appendix are real-valued.

Let $\Delta=\d^*\d$ be the geometric Laplacian on functions on a Riemannian manifold $(M,g)$. Assume that $P$ is an operator of the form $P= a\Delta +V$ where $a$ is a positive smooth function
and $V$ is a continuous function. Let $f$ be a continuous function.

We say that 
  $$Pu\leq f$$
holds in the distributional sense if for all compactly supported smooth nonnegative functions $v$ on $M$
\[ \int_M uPv\, \vo_g\leq \int_M fv\, \vo_g.\]

We say that 
   $$Pu\leq f$$
 holds in the viscosity sense if $u$ is continuous and for every $p\in M$ and $\epsilon >0$ there is a neighborhood $U_\epsilon$ of $p$ and a $C^2$-function $h_\epsilon: U_\epsilon\to \mathbb{R}$ such that $h_\epsilon (p)=u(p), h_\epsilon \leq u$ in $U_\epsilon$ and $P h_\epsilon(p)\leq f(p)$.
 
\begin{thm}\label{viscosity-implies-distrib}\cite[Thm.~1 and 2]{Ishii} A  continuous function $u$ fulfills
 $Pu\leq f$  in the distributional sense if and only if it also fulfills the inequality in the viscosity sense.
\end{thm}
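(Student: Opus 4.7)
The theorem is the classical Ishii equivalence between viscosity and distributional sub-solutions of a linear uniformly elliptic operator, and the statement is local, so I would reduce to working in a coordinate chart in which $P = a\Delta + V$ with $a > 0$ smooth, $V$ and $f$ continuous, and $u$ a continuous real-valued function. I would prove the two implications by different regularization techniques, each reducing the problem to a classical inequality.

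For the direction distributional $\Rightarrow$ viscosity, I would argue by contradiction. Negating the viscosity condition at a point $p$ produces a $C^2$ test function $h$ with $h(p) = u(p)$, touching $u$ in the prescribed one-sided way on a neighborhood, and $Ph(p) > f(p)$. By continuity of $a$, $V$, $f$ and $Ph$, this inequality persists on a small ball $B_r(p)$ and can be strengthened to $Ph - f \geq \delta > 0$ there. Setting $w := u - h$ and introducing the quadratic perturbation $w_\eta(x) := w(x) + \eta |x-p|^2$ to force a strict interior extremum, one has $Pw_\eta \leq f - Ph + \eta \cdot O(1) < 0$ distributionally on $B_r(p)$ for $\eta$ small enough, while $w_\eta$ attains a strict extremum at the interior point $p$. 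The weak maximum principle for continuous distributional sub-/super-solutions of linear uniformly elliptic operators with continuous coefficients (Stampacchia's framework) then forces $w_\eta$ to attain its extremum on $\partial B_r(p)$, contradicting the interior extremum at $p$.

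For the converse (viscosity $\Rightarrow$ distributional), I would regularize $u$ by inf-convolution: for small $\epsilon > 0$ set
\[ u_\epsilon(x) := \inf_{y} \Bigl( u(y) + \tfrac{1}{\epsilon} |x-y|^2 \Bigr) \]
on a slightly shrunken domain. Standard viscosity theory shows $u_\epsilon$ is semiconcave (hence twice differentiable almost everywhere by Alexandrov's theorem), $u_\epsilon \to u$ locally uniformly, and $u_\epsilon$ is itself a viscosity sub-solution of a perturbed inequality $Pu_\epsilon \leq f + \omega(\epsilon)$, where the modulus $\omega$ absorbs the shift in the evaluation point introduced by the convolution (this is where uniform continuity of $a$, $V$, $f$ on compacta enters). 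At points of twice differentiability a viscosity sub-solution is a classical sub-solution, so $Pu_\epsilon \leq f + \omega(\epsilon)$ pointwise a.e. Since $u_\epsilon$ is locally Lipschitz with a one-sided bound on its distributional Hessian, pairing with a nonnegative test function $v \in C_c^\infty$ and integrating by parts yields
\[ \int u_\epsilon \, Pv \,\vo_g \leq \int (f + \omega(\epsilon))\, v \,\vo_g, \]
and letting $\epsilon \to 0$ produces the distributional inequality for $u$.

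The main obstacle is this second direction: ensuring that $u_\epsilon$ remains a viscosity sub-solution up to a vanishing error requires carefully tracking how the shifted base point in the inf-convolution interacts with the variable coefficients $a(x)$ and $V(x)$, using only their continuity (not Lipschitz regularity). The distributional-to-viscosity direction is more elementary once one accepts the weak maximum principle for linear uniformly elliptic distributional sub-solutions; the hard content is really the regularization for merely continuous coefficients, which is precisely the contribution of \cite{Ishii}.
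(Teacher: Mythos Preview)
The paper does not actually prove this statement: it is cited from Ishii, and the only argument the paper supplies is the remark that Ishii's theorem, proved for a broad class of second-order operators on $\mathbb{R}^n$, applies here because the local coordinate expression of $a\Delta_M + V$ in geodesic charts falls within that class. Your proposal makes the same chart reduction and then goes considerably further, sketching the two halves of Ishii's actual proof---a maximum-principle contradiction for the distributional-to-viscosity direction and inf-convolution regularization for the converse. That is the standard route, and your outline is essentially sound; you have reproduced the content the paper deliberately delegates to the reference.

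One caveat: the paper's viscosity definition is phrased existentially (for every $p$ there \emph{exists} a $C^2$ function $h$ touching $u$ from below with $Ph(p)\leq f(p)$), not in the usual universal form over all touching test functions. Your contradiction step in the first direction (``negating the viscosity condition at a point $p$ produces a $C^2$ test function $h$ with $Ph(p)>f(p)$'') tacitly assumes the standard universal definition---negating the paper's existential condition yields ``every touching $h$ fails,'' which does not hand you a single test function to work with. This is more a quirk of the paper's non-standard phrasing than a flaw in your understanding of Ishii's result, but it would need attention in a fully rigorous write-up.
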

Actually, in the reference \cite[Thm.~1 and 2]{Ishii} the statement is proven for a wide class of second-order operators on $\mR^n$. But since this class includes the representation of $a\Delta_M$ in a chart of geodesic coordinates the above theorem follows.

\begin{corollary}\label{viscosity-implies-distrib_cor} Let $f$ be a nonnegative continuous function.
 Let $u\geq 0$ be a continuous function such that  $Pu\leq f$ in the classical sense whenever $u$ is positive. Then, $u$ fulfills $Pu\leq f$ in the distributional sense. 
\end{corollary}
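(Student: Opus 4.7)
The plan is to deduce the distributional inequality from Theorem~\ref{viscosity-implies-distrib} by verifying the viscosity version. So it suffices to show that, for every $p\in M$ and every $\varepsilon>0$, there exists a neighborhood $U_\varepsilon$ of $p$ and a function $h_\varepsilon\in C^2(U_\varepsilon)$ with $h_\varepsilon(p)=u(p)$, $h_\varepsilon\le u$ on $U_\varepsilon$, and $Ph_\varepsilon(p)\le f(p)$.

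I would split into two cases according to the value $u(p)$. If $u(p)>0$, then by continuity $u$ is strictly positive on some neighborhood $U$ of $p$, and the hypothesis tells us that $Pu\le f$ holds classically on $U$ (so in particular $u$ is $C^2$ on $U$). Thus I can simply take $h_\varepsilon:=u|_U$: all three viscosity conditions are immediate. If instead $u(p)=0$, then because $u\ge 0$ the point $p$ is a global minimum of $u$, and I take $h_\varepsilon\equiv 0$ on any neighborhood $U_\varepsilon$ of $p$. Again $h_\varepsilon(p)=0=u(p)$, and $h_\varepsilon\le u$ everywhere. Moreover
\[ Ph_\varepsilon(p)=a(p)\Delta 0+V(p)\cdot 0=0\le f(p),\]
using only the nonnegativity of $f$. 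Hence in both cases the viscosity condition is verified at $p$.

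Applying Theorem~\ref{viscosity-implies-distrib} then yields $Pu\le f$ in the distributional sense, which is the conclusion.

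There is no genuine obstacle here: the whole argument rests on the observation that the nonnegativity of $u$ makes the constant zero a legitimate lower barrier at any zero of $u$, while at positive points the classical inequality supplied by the hypothesis trivially serves as its own viscosity certificate. The only thing to be careful about is to interpret the hypothesis "$Pu\le f$ in the classical sense whenever $u$ is positive" as asserting $C^2$-regularity of $u$ on the open set $\{u>0\}$ together with the pointwise inequality, which is the standard reading and exactly what the first case requires.
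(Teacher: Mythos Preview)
Your proof is correct and follows essentially the same approach as the paper: verify the viscosity inequality by using $h_\varepsilon=u$ at points where $u$ is positive and $h_\varepsilon\equiv 0$ at zeros of $u$, then invoke Theorem~\ref{viscosity-implies-distrib}. You have simply spelled out the details (in particular the use of $f\ge 0$ in the second case) that the paper leaves implicit.
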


\begin{proof}
 We see that $Pu\leq f$ in the viscosity sense by taking $h_\epsilon=u$ whenever $u$ is positive and $h_\epsilon=0$ otherwise. Then,  Theorem~\ref{viscosity-implies-distrib} implies the corollary.
\end{proof}

\bibliographystyle{acm}

\end{document}